\newtheorem{theorem}{Theorem}[section]
\newtheorem{corollary}[theorem]{Corollary}
\newtheorem{lemma}[theorem]{Lemma}
\newtheorem{proposition}[theorem]{Proposition}
\newtheorem{definition-proposition}[theorem]{Definition-Proposition}
\theoremstyle{definition}
\newtheorem{definition}[theorem]{Definition}
\theoremstyle{remark}
\newtheorem{remark}[theorem]{Remark}
\newcommand{\CH}{\operatorname{CH}\nolimits}
\renewcommand{\hom}{\mathrm{hom}}
\newcommand{\alg}{\mathrm{alg}}
\newcommand{\C}{\mathbf{C}}
\newcommand{\Q}{\mathbf{Q}}
\newcommand{\X}{\widetilde{X}}
\newcommand{\im}{\mathrm{Im}\,}
\newcommand{\End}{\mathrm{End}}
\newcommand{\id}{\mathrm{id}}
\newcommand{\Spec}{\mathrm{Spec} \,}
\renewcommand\P{\mathbf{P}}
\renewcommand\r{\rightarrow}
\renewcommand{\ker}{\mathrm{Ker}\,}
\newcommand\h{\mathrm{H}}
\renewcommand\h{\mathfrak{h}}
\renewcommand\j{\widetilde{j}}
\begin{document}

\title{Algebraic cycles and fibrations}
\author{Charles Vial}

\address{DPMMS, University of Cambridge, Wilberforce Road, Cambridge,
  CB3 0WB, UK} \email{c.vial@dpmms.cam.ac.uk}
\urladdr{http://www.dpmms.cam.ac.uk/~cv248/} 
\thanks{2010 {\em
    Mathematics Subject Classification.} 14C15, 14C25, 14C05, 14D99}
\thanks{ The author is supported by  EPSRC Early Career Fellowship
  number EP/K005545/1.}

\maketitle



For a scheme $X$ over a field $k$, $\CH_i(X)$ denotes the rational
Chow group of $i$-dimensional cycles on $X$ modulo rational
equivalence.  Throughout, $f : X \r B$ will be a projective surjective
morphism defined over $k$ from a quasi-projective variety $X$ of
dimension $d_X$ to an irreducible quasi-projective variety $B$ of
dimension $d_B$, with various extra assumptions which will be
explicitly stated.  Let $h$ be the class of a hyperplane section in
the Picard group of $X$.  Intersecting with $h$ induces an action
$\CH_i(X) \r \CH_{i-1}(X)$ still denoted $h$. Our first observation is
Proposition \ref{inj}: when $B$ is smooth, the map
\begin{equation} \label{E:splitinj}
  \bigoplus_{i=0}^{d_X-d_B} h^{d_X -d_B -i}
\circ f^* \ : \ \bigoplus_{i=0}^{d_X-d_B} \CH_{l-i}(B) \longrightarrow
\CH_l(X)
\end{equation}
is injective for all $l$ and a left-inverse can be expressed as a
combination of the proper pushforward $f_*$, the refined pullback
$f^*$ and intersection with $h$. It is then not too surprising that,
when both $B$ and $X$ are smooth projective, the morphism of Chow
motives
$$ \bigoplus_{i=0}^{d_X-d_B} h^{d_X -d_B -i} \circ f^* \ : \
\bigoplus_{i=0}^{d_X-d_B} \h(B)(i) \longrightarrow \h(X)$$ is split
injective; see Theorem \ref{T:mainrevisited}. By taking a
cohomological realisation, for instance by taking Betti cohomology if
$k \subseteq \C$, we thus obtain that the map $$
\bigoplus_{i=0}^{d_X-d_B} h^{d_X -d_B -i} \circ f^* \ : \
\bigoplus_{i=0}^{d_X-d_B} \mathrm{H}^{n-2i}(B, \Q) \longrightarrow
\mathrm{H}^n(X,\Q)$$ is split injective for all $n$ and thus realises
the left-hand side group as a sub-Hodge structure of the right-hand
side group. This observation can be considered as a natural
generalisation of the elementary fact that a smooth projective variety
$X$ of Picard rank $1$ does not admit a non-constant dominant map to a
smooth projective variety of smaller dimension.

Let now $\Omega$ be a universal domain containing $k$, that is an
algebraically closed field containing $k$ which has infinite
transcendence degree over its prime subfield. Let us assume that there
is an integer $n$ such that the fibres $X_b$ of $f$ over
$\Omega$-points $b$ of $B$ satisfy $\CH_{l}(X_b) = \Q$ for all $l <
n$. If $f$ is flat, then Theorem \ref{main} shows that
\eqref{E:splitinj} is surjective for all $l<n$. When $X$ and $B$ are
both smooth projective, we deduce in Theorem \ref{T:gen-intro-text} a
direct sum decomposition of the Chow motive of $X$ as
\begin{equation} \label{E:dec} \h(X) \cong \bigoplus_{i=0}^{d_X-d_B}
  \h(B)(i) \oplus M(n),
\end{equation}
where $M$ is isomorphic to a direct summand of the motive of some
smooth projective variety $Z$ of dimension $d_X - 2n$. This notably
applies when $X$ is a projective bundle over a smooth projective
variety $B$ to give the well-known isomorphism
$\bigoplus_{i=0}^{d_X-d_B} \h(B)(i) \stackrel{\simeq}{\longrightarrow}
\h(X)$. Such a morphism is usually shown to be an isomorphism by an
existence principle, namely Manin's identity principle. Here, we
actually exhibit an explicit inverse to that isomorphism. The same
arguments are used in Theorem \ref{T:blowup} to provide an explicit
inverse to the smooth blow-up formula for Chow groups.  More
interesting is the case when a smooth projective variety $X$ is fibred
by complete intersections of low degree. For instance, the
decomposition \eqref{E:dec} makes it possible in Corollary
\ref{C:mainquadric} to construct a Murre decomposition (see Definition
\ref{D:Murre}) for smooth projective varieties fibred by quadrics over
a surface, thereby generalising a result of del Angel and
M\"uller--Stach \cite{dAMS} where a Murre decomposition was
constructed for $3$-folds fibred by conics over a surface, and also
generalising a previous result \cite{VialCK} where, in particular, a
Murre decomposition was constructed for $4$-folds fibred by quadric
surfaces over a surface.  Another consequence of the decomposition
\eqref{E:dec} is that rational and numerical equivalence agree on
smooth projective varieties $X$ fibred by quadrics over a curve or a
surface defined over a finite field; see Corollary \ref{C:ratnum}.  It
should be mentioned that our approach bypasses the technique of
Gordon--Hanamura--Murre \cite{GHM2}, where Chow--K\"unneth
decompositions are constructed from relative Chow--K\"unneth
decompositions. In our case, we do not require the existence of a
relative Chow--K\"unneth decomposition, nor do we require $f$ to be
smooth away from finitely many points as is the case in \cite{GHM2}.
Finally, it should be noted, for instance if $X$ is complex smooth
projective fibred by quadrics, that \eqref{E:dec} actually computes
some of the Hodge numbers of $X$ without going through a detailed
analysis of the Leray--Serre spectral sequence. \medskip

More generally, we are interested in computing, in some sense, the
Chow groups of $X$ in terms of the Chow groups of $B$ and of the
fibres of $f$. Let us first clarify what is meant by ``fibres''. We
observed in \cite[Theorem 1.3]{VialCK} that if $B$ is smooth and if a
general fibre of $f$ has trivial Chow group of zero-cycles
(\emph{i.e.} if it is spanned by the class of a point), then $f_* :
\CH_0(X) \r \CH_0(B)$ is an isomorphism with inverse a rational
multiple of $h^{d_X-d_B}\circ f^*$. We thus see that, as far as
zero-cycles on the fibres are concerned, it is enough to consider only
the general fibre.  For that matter, we show in Proposition
\ref{general-generic} that, provided the ground field is a universal
domain, it is actually enough that a very general fibre have trivial
Chow group of zero-cycles.  However, if one is willing to deal with
positive-dimensional cycles, it is no longer possible to ignore the
Chow groups of some of the fibres. For instance, if $\X_Y \r X$ is a
smooth blow-up along a smooth center $Y \subseteq X$, then
$\CH_1(\X_Y)$ is isomorphic to $\CH_1(X) \oplus \CH_0(Y)$, although a
general fibre of $\X_Y \r X$ is reduced to a point and hence has
trivial $\CH_1$. One may argue that a smooth blow-up is not flat. Let
us however consider as in \cite{B-conic} a complex flat conic
fibration $f : X \r \P^2$, where $X$ is smooth projective. All fibres
$F$ of $f$ satisfy $\CH_0(F)=\Q$.  A smooth fibre $F$ of $f$ is
isomorphic to $\P^1$ and hence satisfies $\CH_1(F)=\Q$. A singular
fibre $F$ of $f$ is either a double line, or the union of two lines
meeting at a point. In the latter case $\CH_1(F)= \Q \oplus \Q$. This
reflects in $\CH_1(X)$ and, as shown in \cite{B-conic},
$\CH_1(X)_{\hom}$ is isomorphic to the Prym variety attached to the
discriminant curve of $f$. This suggests that a careful analysis of
the degenerations of $f$ is required in order to derive some precise
information on the Chow groups of $X$.  On another perspective, the
following examples show what kind of limitation is to be expected when
dealing with fibres with non-trivial Chow groups. Consider first a
complex Enriques surface $S$ and let $T \r S$ be the $2$-covering by a
$K3$-surface $T$. We know that $\CH_0(S)=\Q$, and the fibres $T_s$ are
disjoint union of two points and hence satisfy $\CH_0(T_s)= \Q \oplus
\Q$. However, we cannot expect $\CH_0(T)$ to be correlated in some way
to the Chow groups of $S$ and of the fibres, because a theorem of
Mumford \cite{Mumford} says that $\CH_0(T)$ is infinite-dimensional in
a precise sense.  Another example is given by taking a pencil of
high-degree hypersurfaces in $\P^n$. Assume that the base locus $Z$ is
smooth. By blowing up $Z$, we get a morphism $\widetilde{\P}^n_Z \r
\P^1$.  In that case, $\CH_0(\P^1)=\Q$ and the $\CH_0$ of the fibres
is infinite-dimensional, but $\CH_0(\widetilde{\P}^n_Z) = \Q$.

Going back to the case where the general fibre of $f: X \r B$ has
trivial Chow group of zero-cycles, we see that $\CH_0(X)$ is supported
on a linear section of dimension $d_B$. We say that $\CH_0(X)$ has
niveau $d_B$. More generally, Laterveer \cite{Laterveer} defines a
notion of niveau on Chow groups as follows.  For a variety $X$, the
group $\CH_i(X)$ is said to have \emph{niveau} $\leq n$ if there
exists a closed subscheme $Z$ of $X$ of dimension $\leq i+n$ such that
$\CH_i(Z) \r \CH_i(X)$ is surjective, in other words if the $i$-cycles
on $X$ are supported in dimension $i+n$.  It can be proved
\cite[Theorem 1.7]{VialCK} that if a general fibre $F$ of $f : X \r B$
is such that $\CH_0(F)$ has niveau $\leq 1$, then $\CH_0(X)$ has
niveau $\leq d_B +1$. In that context, a somewhat more precise
question is: what can be said about the niveau of the Chow groups of
$X$ in terms of the niveau of the Chow groups of the fibres of $f : X
\r S$?  A statement one would hope for is the following: if the fibres
$X_b$ of $f : X \r B$ are such that $\CH_*(X_b)$ has niveau $\leq n$
for all $\Omega$-points $b \in B$, then $\CH_*(X)$ has niveau $\leq n
+ d_B$.  We cannot prove such a general statement but we prove it when
some of the Chow groups of the fibres of $f$ are either spanned by
linear sections or have niveau $0$, \emph{i.e.} when they are
finite-dimensional $\Q$-vector spaces.  Precisely, if $f : X \r B$ is
a complex projective surjective morphism onto a smooth
quasi-projective variety $B$, we show that $\CH_l(X)$ has niveau $\leq
d_B$ in the following cases:
\begin{itemize}
\item $\CH_i(X_b) = \Q$ for all $i\leq l$ and all $b \in B(\C)$
  (Theorem \ref{linear-theorem-complex});
\item $d_B = 1$ and $\CH_i(X_b)$ is finitely generated for all $i\leq
  l$ and all $b \in B(\C)$ (Theorem \ref{1-complex});
\item $f$ is smooth away from finitely many points, $\CH_i(X_b) = \Q$
  for all $i<l$ and $\CH_l(X_b)$ is finitely generated, for all $b \in
  B(\C)$ (Theorem \ref{genth-complex}).
\end{itemize}
These results, which are presented in Section \ref{general},
complement the generalisation of the projective bundle formula of
Theorem \ref{main} by dropping the flatness condition on $f$ and by
requiring in some cases that the Chow groups of the fibres be finitely
generated instead of one-dimensional. Their proofs use standard
techniques such as localisation for Chow groups (for that matter,
information on the Chow groups of the fibres of $f$ is extracted from
information on the Chow groups of the closed fibres of $f$ in Section
\ref{complex}), relative Hilbert schemes and a Baire category
argument.  Let us mention that the assumption of Theorem
\ref{genth-complex} on the singular locus of $f$ being finite is also
required in \cite{GHM2} where the construction of relative
Chow--K\"unneth decompositions is considered.  Finally, Theorem
\ref{T:niveau} gathers known results about smooth projective varieties
whose Chow groups have small niveau. Together with the results above,
in Section \ref{examples}, we prove some conjectures on algebraic
cycles (such as Kimura's finite-dimensionality conjecture
\cite{Kimura}, Murre's conjectures \cite{Murre1}, Grothendieck's
standard conjectures \cite{Kleiman}, the Hodge conjecture) for some
smooth projective varieties fibred by very low degree complete
intersection, or by cellular varieties over surfaces. For instance, we
show the existence of a Murre decomposition for smooth projective
varieties fibred by cellular varieties over a curve (Proposition
\ref{thcellular}) and for $6$-folds fibred by cubics over a curve
(Proposition \ref{cubics}), and the standard conjectures for varieties
fibred by smooth cellular varieties of dimension $\leq 4$ (Proposition
\ref{thcellular}) or by quadrics (Proposition \ref{quadrics1}) over a
surface.

\subsection*{Notations.}
We work over a field $k$ and we let $\Omega$ be a universal domain
that contains $k$. A \emph{variety} over $k$ is a reduced scheme of
finite type over $k$.  Throughout, $f : X \r B$ denotes a projective
surjective morphism defined over $k$ from a quasi-projective variety
$X$ of dimension $d_X$ to an irreducible quasi-projective variety $B$
of dimension $d_B$. Given a scheme $X$ over $k$, the group $\CH_i(X)$
is the $\Q$-vector space with basis the $i$-dimensional irreducible
reduced subschemes of $X$ modulo rational equivalence. By definition,
we set $\CH_j(X)=0$ for $j<0$ and we say that $\CH_i(X)$ is
\emph{finitely generated} if it is finitely generated as a $\Q$-vector
space, \emph{i.e.}  if it is a finite-dimensional $\Q$-vector
space. If $Z$ is an irreducible closed subscheme of $X$, we write
$[Z]$ for the class of $Z$ in $\CH_*(X)$. If $\alpha$ is the class of
a cycle in $\CH_*(X)$, we write $|\alpha|$ for the support in $X$ of a
cycle representing $\alpha$. If $Y$ is a scheme over $k$ and if
$\beta$ is a cycle in $\CH_*(X \times Y)$, we define its transpose
${}^t \beta \in \CH_*(Y \times X)$ to be the proper pushforward of
$\beta$ under the obvious map $\tau : X \times Y \r Y \times X$. If
$X$ and $Y$ are smooth projective, a cycle $\gamma \in \CH_*(X \times
Y)$ is called a correspondence. The correspondence $\gamma$ acts both
on $\CH_*(X)$ and $\CH_*(Y)$ in the following way. Let $p_X : X \times
Y \r X$ and $p_Y : X \times Y \r Y$ be the first and second
projections, respectively.  These are proper and flat and we may
define, for $\alpha \in \CH_*(X)$, $\gamma_* \alpha := (p_Y)_*(\gamma
\cdot p_X^*\alpha)$.  Here ``$\cdot$'' is the intersection product on
non-singular varieties as defined in \cite[\S8]{Fulton}. We then
define, for $\beta \in \CH_*(Y)$, $\gamma^*\beta :=
({}^t\gamma)_*\beta$. Given another smooth projective variety $Z$ and
a correspondence $\gamma' \in \CH_*(Y \times Z)$, the composite
$\gamma' \circ \gamma \in \CH_*(X \times Z)$ is defined to be
$(p_{XZ})_*(p_{XY}^*\gamma \cdot p_{YZ}^*\gamma')$, where $p_{XY} : X
\times Y \times Z \r X \times Y$ is the projection and likewise for
$p_{XZ}$ and $p_{YZ}$. The composition of correspondence is compatible
with the action of correspondences on Chow groups \cite[\S
16]{Fulton}.

Motives are defined in a covariant setting and the notations are those
of \cite{VialCK}.  Briefly, a Chow motive (or motive, for short) $M$
is a triple $(X,p,n)$ where $X$ is a variety of pure dimension $d_X$,
$p \in \CH_d(X\times X)$ is an idempotent ($p\circ p = p$) and $n$ is
an integer. The motive of $X$ is denoted $\h(X)$ and, by definition,
is the motive $(X,\Delta_X,0)$ where $\Delta_X$ is the class in
$\CH_{d_X}(X \times X)$ of the diagonal in $X \times X$. We write
$\mathds{1}$ for the unit motive $(\Spec k, \Delta_{\Spec k},0) =
\h(\Spec k)$.  With our covariant setting, we have $\h(\P^1) =
\mathds{1} \oplus \mathds{1}(1)$.  A morphism between two motives
$(X,p,n)$ and $(Y,q,m)$ is a correspondence in $q \circ \CH_{d_X+n-m}
(X \times Y) \circ p$. If $f : X \r Y$ is a morphism, $\Gamma_f$
denotes the graph of $f$ in $X \times Y$. By abuse, we also write
$\Gamma_f \in \CH_{d_X}(X \times Y)$ for the class of the graph of
$f$. It defines a morphism $\Gamma_f : \h(X) \r \h(Y)$.  By definition
we have $\CH_i(X,p,n) = p_*\CH_{i-n}(X)$ and $\mathrm{H}_i(X,p,n) =
p_*\mathrm{H}_{i-2n}(X)$, where we write $\mathrm{H}_i(X) :=
\mathrm{H}^{2d-i}(X(\C),\Q)$ for singular homology when $k \subseteq
\C$, or $\mathrm{H}_i(X) :=
\mathrm{H}^{2d-i}(X_{\overline{k}},\Q_\ell)$ for $\ell$-adic homology
($\ell \neq \mathrm{char} \, k$) otherwise.

Given an irreducible scheme $Y$ over $k$, $\eta_Y$ denotes the generic
point of $Y$. If $f : X \r B$ and if $Y$ is a closed irreducible
subscheme of $B$, $X_{\eta_Y}$ denotes the fibre of $f$ over the
generic point of $Y$ and $X_{\overline{\eta}_Y}$ denotes the fibre of
$f$ over a geometric generic point of $Y$

\section{Surjective morphisms and motives}

Let us start by recalling a few facts about intersection theory. Let
$f : X \r Y$ be a morphism of schemes defined over $k$ and let $l$ be
an integer. If $f$ is proper, then there is a well-defined proper
pushforward map $f_* : \CH_l(X) \r \CH_l(Y)$; see
\cite[\S1.4]{Fulton}. If $f$ is flat, then there is a well-defined
flat pullback map $f^* : \CH^l(Y) \r \CH^l(X)$; see
\cite[\S1.7]{Fulton}. Pullbacks can also be defined in the following
two situations.  On the one hand, if $D$ is a Cartier divisor with
support $\iota : |D| \hookrightarrow X$, there is a well-defined Gysin
map $\iota^* : \CH_l(X) \r \CH_{l-1}(|D|)$ and the composite $\iota_*
\circ \iota^* : \CH_l(X) \r \CH_{l-1}(X)$ does not depend on the
linear equivalence class of $D$, that is, there is a well-defined
action of the Picard group $\mathrm{Pic}(X)$ on $\CH_l(X)$; see
\cite[\S2]{Fulton}. For instance, if $X$ is a quasi-projective variety
given with a fixed embedding $X \hookrightarrow \P^N$, then there is a
well-defined map $h : \CH_l(X) \r \CH_{l-1}(X)$ given by intersecting
with a hyperplane section of $X$. More generally, if $\tau : Y
\hookrightarrow X$ is a locally complete intersection of codimension
$r$, then there is a well-defined Gysin map $\tau^* : \CH_l(X)
\rightarrow \CH_{l-r}(Y)$; see \cite[\S 6]{Fulton}.  For $n \geq 0$,
we write $h^0 = \id : \CH_l(X) \r \CH_l(X)$ and $h^n$ for the $n$-fold
composite $h \circ \ldots \circ h : \CH_l(X) \r \CH_{l-n}(X)$. By
functoriality of Gysin maps \cite[\S 6.5]{Fulton}, if $\iota^n : H^n
\hookrightarrow X$ denotes a linear section of codimension $n$, then
the composite map $h \circ \ldots \circ h$ coincides with $\iota^n_*
\circ (\iota^n)^*$.  When $X$ is smooth projective, we write
$\Delta_{H^n}$ for the diagonal inside $H^n \times H^n$, and the
correspondence $\Gamma_{\iota^n} \circ {}^t\Gamma_{\iota^n} = (\iota^n
\times \iota^n)_*[\Delta_{H^n}] \in \CH_{d_X-n}(X \times X)$ induces a
map $\CH_l(X) \r \CH_{l-n}(X)$ that coincides with the map $h^n$; see
\cite[\S16]{Fulton}. By abuse, we also write $h^n = \Gamma_{\iota^n}
\circ {}^t\Gamma_{\iota^n}$ for $n >0$ and $h^0 := [\Delta_X]$. On the
other hand, if $f : X \r Y$ is a morphism to a non-singular variety
$Y$ and if $x \in \CH_*(X)$ and $y \in \CH_*(Y)$, then there is a
well-defined refined intersection product $x \cdot_f y \in \CH_*(|x|
\cap f^{-1}(|y|))$, where ``$\cap$'' denotes the scheme-theoretic
intersection; see \cite[\S 8]{Fulton}. The pullback $f^*y$ is then
defined to be the proper pushforward of $[X] \cdot_f y$ in
$\CH_*(X)$. Let us denote $\gamma_f : X \rightarrow X \times Y$ the
morphism $x \mapsto (x,f(x))$. Because $Y$ is non-singular, this
morphism is a locally complete intersection morphism and the pullback
$f^*$ is by definition $\gamma_f^* \circ p_Y^*$, where $p_Y: X \times
Y \rightarrow Y$ is the projection and $\gamma_f^*$ is the Gysin map;
see \cite[\S 8]{Fulton}. Finally, if $f$ is flat, then this pullback
map coincides with flat pullback \cite[Prop.  8.1.2]{Fulton}.\medskip

We have the following basic lemma.

\begin{lemma}
  \label{L:dominant} Let $f : X \r B$ be a projective surjective
  morphism between two quasi-projective varieties. Let $X'
  \hookrightarrow X$ be a linear section of $X$ of dimension $\geq
  d_B$. Then $f|_{X'} : X' \r B$ is surjective.
\end{lemma}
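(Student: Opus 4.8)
The plan is to prove surjectivity of $f|_{X'}$ one fibre at a time: since $f$ itself is surjective, it suffices to show that for every $b \in B$ the set $X' \cap f^{-1}(b)$ is nonempty. I would work inside the fixed embedding $X \hookrightarrow \PP^N$ that defines the linear section, writing $X' = X \cap L$ for a linear subspace $L \subseteq \PP^N$ of codimension $c = d_X - \dim X'$. The hypothesis $\dim X' \geq d_B$ then reads $c \leq d_X - d_B$, so that $\dim L = N - c \geq N - (d_X - d_B)$.

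The argument then rests on two standard inputs. First, the theorem on the dimension of the fibres of a dominant morphism gives that, because $f$ is surjective onto the irreducible variety $B$ of dimension $d_B$, every fibre $f^{-1}(b)$ has dimension at least $d_X - d_B$ (one reduces to an irreducible component of $X$ dominating $B$, which then surjects onto $B$ since $f$ is proper). Second, as $f$ is projective, each fibre $f^{-1}(b)$ is complete, hence closed in $\PP^N$, and is therefore a genuine projective subvariety of $\PP^N$. I can then invoke the projective dimension theorem: two closed subvarieties $Y, Z \subseteq \PP^N$ with $\dim Y + \dim Z \geq N$ have nonempty intersection. Taking $Y$ to be an irreducible component of $f^{-1}(b)$ of dimension at least $d_X - d_B$ and $Z = L$, the count $\dim Y + \dim L \geq (d_X - d_B) + (N - (d_X - d_B)) = N$ forces $Y \cap L \neq \emptyset$, hence $f^{-1}(b) \cap L \neq \emptyset$. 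Finally, since $f^{-1}(b) \subseteq X$, we have $f^{-1}(b) \cap L = f^{-1}(b) \cap (X \cap L) = f^{-1}(b) \cap X'$, which is therefore nonempty; as $b$ was arbitrary, $f|_{X'}$ is surjective.

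The step I expect to be the main obstacle is securing the uniform lower bound $\dim f^{-1}(b) \geq d_X - d_B$ for \emph{all} $b \in B$, not merely for the generic point. The generic fibre has dimension exactly $d_X - d_B$, and the content of the fibre-dimension theorem is precisely that this dimension can only jump up on proper closed subsets of $B$ and never drops below the generic value; this is also the point at which the irreducibility of $B$, together with the reduction to a component of $X$ dominating $B$, is genuinely used. Everything else — the codimension bookkeeping for the linear section and the appeal to the projective dimension theorem — is routine.
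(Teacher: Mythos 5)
Your proof is correct and is essentially the paper's own argument: the paper likewise fixes the embedding $X \hookrightarrow \P^N$, observes that the linear subspace cutting out $X'$ has codimension at most $d_X - d_B$ while every fibre of $f$ is complete of dimension at least $d_X - d_B$, and concludes that each fibre meets $X'$. You have merely made explicit the two standard inputs the paper leaves implicit, namely the fibre-dimension theorem (via the reduction to an irreducible component of $X$ dominating $B$) and the projective dimension theorem in $\P^N$.
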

\begin{proof} Let $X \hookrightarrow \P^N$ be an embedding of $X$ in
  projective space and let $H \hookrightarrow \P^N$ be a linear
  subspace such that $X'$ is obtained as the pullback of $X$ along $H
  \hookrightarrow \P^N$. The linear subvariety $H$ has codimension at
  most $d_X-d_B$ in $\P^N$ while a geometric fibre of $f$ has
  dimension at least $d_X - d_B$. Thus every geometric fibre of $f$
  meets $H$ and hence $X'$. It follows that $f|_{X'}$ is surjective.
\end{proof}

\begin{lemma} \label{dominant} 
  Let $f : X \r B$ be a projective surjective morphism to a smooth
  quasi-projective variety $B$. Then there exists a positive integer
  $n$ such that, for all $i$, $f_* \circ h^{d_X-d_B} \circ f^* :
  \CH_i(B) \r \CH_i(B)$ is multiplication by $n$. If moreover $X$ is
  smooth and $B$ is projective, then
  $$\Gamma_f \circ h^{d_X -d_B} \circ {}^t\Gamma_f = n \cdot \Delta_B
  \in \CH_{d_B}(B \times B).$$
\end{lemma}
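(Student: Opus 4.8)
The plan is to treat the two assertions separately: the first by a projection-formula computation with Fulton's refined intersection product, the second by rewriting the composite of correspondences as a composite of graphs. Write $e := d_X - d_B$.

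For the first assertion I would start from the definition of the refined pullback recalled above: for $y \in \CH_i(B)$ one has $f^*y = [X]\cdot_f y$, pushed forward to $X$. Since intersecting with the hyperplane class $h$ is a Cartier-divisor (Gysin) operation, it commutes with the refined product $\cdot_f$ by the commutativity of these operations \cite[\S8]{Fulton}; hence
\begin{equation*}
h^{e}\cdot f^*y \;=\; h^{e}\cdot\big([X]\cdot_f y\big)\;=\;\big(h^{e}\cdot[X]\big)\cdot_f y .
\end{equation*}
Applying the projection formula for refined products \cite[\S8]{Fulton} to the proper morphism $f$ and the smooth target $B$ then gives
\begin{equation*}
f_*\big(h^{e}\cdot f^*y\big)\;=\;\big(f_*h^{e}\big)\cdot y .
\end{equation*}
Now $f_*h^{e}$ lies in $\CH_{d_B}(B)$, and since $B$ is irreducible of dimension $d_B$ we have $\CH_{d_B}(B)=\Q\cdot[B]$, so $f_*h^{e}=n\,[B]$ for a scalar $n$; as $[B]$ is the unit for the intersection product on the smooth variety $B$, this yields $f_*(h^{e}\cdot f^*y)=n\,y$ for every $y$, with the same $n$ for all $i$. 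Reading off the coefficient at the generic point $\eta_B$ identifies $n$ with $\deg_h(X_{\eta_B})$, the degree of the generic fibre with respect to $h$; this is a positive integer because $X_{\eta_B}$ is a non-empty projective scheme of pure dimension $e$ and $h$ is the class of a hyperplane section.

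For the second assertion, assume in addition that $X$ is smooth and $B$ projective, and take the linear section $\iota^{e}\colon H^{e}\hookrightarrow X$ defining $h^{e}=\Gamma_{\iota^{e}}\circ{}^t\Gamma_{\iota^{e}}$ to be general, so that $H^{e}$ is smooth projective of dimension $d_B$ by Bertini. Using associativity of composition together with $\Gamma_f\circ\Gamma_{\iota^{e}}=\Gamma_{g}$ and its transpose, where $g:=f\circ\iota^{e}=f|_{H^{e}}\colon H^{e}\r B$, I would rewrite
\begin{equation*}
\Gamma_f\circ h^{e}\circ{}^t\Gamma_f
\;=\;\big(\Gamma_f\circ\Gamma_{\iota^{e}}\big)\circ\big({}^t\Gamma_{\iota^{e}}\circ{}^t\Gamma_f\big)
\;=\;\Gamma_{g}\circ{}^t\Gamma_{g}.
\end{equation*}
By Lemma \ref{L:dominant} the restriction $g$ is surjective, and since $\dim H^{e}=\dim B=d_B$ it is generically finite; its degree equals the number of points of $X_{\eta_B}$ cut out by the $e$ general hyperplanes, namely $\deg_h(X_{\eta_B})=n$.

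It then remains to compute $\Gamma_{g}\circ{}^t\Gamma_{g}\in\CH_{d_B}(B\times B)$. Because $g$ is a morphism, two points of $B$ can share a preimage in $H^{e}$ only if they coincide, so the support of this composite is contained in the diagonal $\Delta_B$; as $\Delta_B$ is irreducible of dimension $d_B$ we get $\Gamma_{g}\circ{}^t\Gamma_{g}=c\,\Delta_B$ for some scalar $c$. To pin down $c$ I would compute the induced action on Chow groups: $\Gamma_{g}\circ{}^t\Gamma_{g}$ acts as $g_*\circ g^*$, which by the first assertion applied to $g$ (with relative dimension $0$) is multiplication by $\deg g=n$, while $c\,\Delta_B$ acts as multiplication by $c$; evaluating on any nonzero class forces $c=n$. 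This gives $\Gamma_f\circ h^{e}\circ{}^t\Gamma_f=n\,\Delta_B$, with the same $n$ as before since $\deg g=\deg_h(X_{\eta_B})$.

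The step I expect to be the main obstacle is the first one: making rigorous that $h$ commutes with the refined product $\cdot_f$ and that the projection formula applies in this refined setting, so that the whole composite collapses to multiplication by the single class $f_*h^{e}\in\CH_{d_B}(B)$; this is where the smoothness of $B$, needed both to define $f^*$ and to have $[B]$ act as a unit, is essential. Once that identity is in place, the determination of $n$ as a fibre degree and the correspondence computation are formal, the only care being the general choice of $H^{e}$ so that Bertini and Lemma \ref{L:dominant} apply.
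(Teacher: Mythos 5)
Your proof follows essentially the same skeleton as the paper's: reduce the composite $f_*\circ h^{e}\circ f^*$ (with $e:=d_X-d_B$) to a statement about $g:=f|_{H^{e}}$, extract the scalar via the projection formula, and for the correspondence identity show the composite is supported on the diagonal and read off the coefficient from the action on Chow groups. In the first part you commute $h^{e}$ past the refined product (compatibility of Gysin maps with Chern class operations, \cite[Thm.~6.2]{Fulton}) and then apply the projection formula \cite[Prop.~8.1.1(c)]{Fulton} directly to $f$; the paper instead verifies the compatibility $(f\circ\iota')^*=(\iota')^*\circ f^*$ by functoriality of Gysin maps and applies the same projection formula to $g$. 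These are parallel, equally legitimate routes through \cite[\S 6, \S 8]{Fulton}. One small caveat: your positivity argument for $n$ assumes the generic fibre $X_{\eta_B}$ is non-empty of pure dimension $e$, which presupposes $X$ irreducible (or pure-dimensional with every top component dominating); the paper's route --- since $g$ is surjective by Lemma \ref{L:dominant} and $\dim H'=d_B$, some component of $H'$ dominates $B$, so $g$ is generically finite of degree $n\geq 1$ --- is slightly more robust on this point.

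The one genuine deviation is your appeal to Bertini in the second part, to choose $H^{e}$ smooth so that you may compose correspondences through the motive of $H^{e}$. This is both unnecessary and problematic: the lemma is stated over an arbitrary field $k$ and is applied over finite fields downstream (via Theorem \ref{T:mainrevisited}, it feeds into Corollary \ref{C:ratnum}), and Bertini for hyperplane sections can fail over a finite field --- there exist smooth projective $X\subset\P^N$ over $\mathbf{F}_q$ all of whose hyperplane sections are singular, and you cannot substitute higher-degree hypersurfaces without changing the class $h$. The paper's proof never needs $H'$ smooth, because it never composes through $H'$: composing a class on $X\times X$ with the graphs $\Gamma_f$ and ${}^t\Gamma_f$ amounts to pushing forward along $f\times f$ (a projection-formula computation on the smooth ambient varieties $B\times X\times X\times B$), so $\Gamma_f\circ h^{e}\circ{}^t\Gamma_f=(g\times g)_*[\Delta_{H^{e}}]$, which is visibly supported on $\Delta_B$; the coefficient is then pinned down exactly as you do. Alternatively, your argument can be repaired by base-changing to $\overline{k}$, where Bertini applies, and using that $\CH_{d_B}(B\times B)\r\CH_{d_B}((B\times B)_{\overline{k}})$ is injective with $\Q$-coefficients (norm argument plus passage to the limit), noting that $n$ is unchanged under base change since $f_*h^{e}=n[B]$ is; but the pushforward computation is cleaner and field-agnostic, and is what the paper does.
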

\begin{proof} Let $\iota' := \iota^{d_X-d_B} : H' \hookrightarrow X$
  be a linear section of $X$ of dimension $d_B$. We first check, for
  lack of reference, that $(f \circ \iota')^* = (\iota')^* \circ f^*$
  on Chow groups. Here, $f \circ \iota'$ and $f$ are morphisms to the
  non-singular variety $B$ and as such the pullbacks $(f \circ
  \iota')^*$ and $(\iota')^*$ are the ones of \cite[\S 8]{Fulton},
  while $\iota'$ is the inclusion of a locally complete intersection
  and as such the pullback $(\iota')^*$ is the Gysin pullback of
  \cite[\S 6]{Fulton}. Let $\sigma \in \CH^*(B)$, then
  $(\iota')^*f^*\sigma = (\iota')^*\gamma_f^*([X]\times \sigma) =
  (\gamma_f \circ \iota')^*([X]\times \sigma)$, where the second
  equality follows from the functoriality of Gysin maps \cite[\S
  6.5]{Fulton}. Since $\gamma_f \circ \iota' = (\iota'\times
  \mathrm{id}_B) \circ \gamma_{f\circ \iota'}$, we get by using
  functoriality of Gysin maps once more that $(\iota')^*f^*\sigma =
  \gamma_{f\circ \iota'}^*(\iota' \times \mathrm{id}_B)^*([X] \times
  \sigma)$. Now, we have $(\iota' \times \mathrm{id}_B)^*([X]\times
  \sigma) = (\iota')^*[X]\times \sigma = [H'] \times \sigma$; see
  \cite[Example 6.5.2]{Fulton}. We therefore obtain that
  $(\iota')^*f^*\sigma = \gamma_{f\circ \iota'}^*([H'] \times \sigma)
  := (f\circ \iota')^*\sigma$, as claimed.
    
  Thus, since in addition both $f$ and $\iota'$ are proper, we have by
  functoriality of proper pushforward $(f \circ \iota')_* (f \circ
  \iota')^* = f_* \iota'_* (\iota')^* f^* = f_* \circ h^{d_X-d_B}
  \circ f^*$. By Lemma \ref{L:dominant}, the composite morphism $g :=
  f \circ \iota'$ is generically finite, of degree $n$ say. It follows
  from the projection formula \cite[Prop. 8.1.1(c)]{Fulton} and from
  the definition of proper pushforward that, for all $\gamma \in
  \CH_i(B)$, $$f_* \circ h^{d_X-d_B} \circ f^*\gamma = g_* ( [H']
  \cdot g^*\gamma) = g_*([H']) \cdot \gamma = n [B] \cdot \gamma = n
  \gamma.$$

  Assume now that $X$ and $B$ are smooth projective. In that case, we
  have $\Gamma_f \circ h^{d_X -d_B} \circ {}^t\Gamma_f = \Gamma_g
  \circ {}^t\Gamma_g : = (p_{1,3})_*(p_{1,2}^*{}^t\Gamma_g \cdot
  p_{2,3}^*\Gamma_g)$, where $p_{i,j}$ denotes projection from $B
  \times H' \times B$ to the $(i,j)$-th factor. By refined
  intersection, we see that $\Gamma_g \circ {}^t\Gamma_g$ is supported
  on $(p_{1,3})([{}^t\Gamma_g \times B] \cap [B \times \Gamma_g])$,
  which itself is supported on the diagonal of $B \times B$. Thus
  $\Gamma_f \circ h^{d_X -d_B} \circ {}^t\Gamma_f$ is a multiple of
  $\Delta_B$.  We have already showed that $f_* \circ h^{d_X-d_B}
  \circ f^* =(\Gamma_f \circ h^{d_X -d_B} \circ {}^t\Gamma_f)_*$ acts
  by multiplication by $n$ on $\CH_i(B)$. Therefore, $\Gamma_f \circ
  h^{d_X -d_B} \circ {}^t\Gamma_f = n \cdot \Delta_B$.
\end{proof}

The following lemma is reminiscent of \cite[Prop. 3.1.(a)]{Fulton}.

\begin{lemma} \label{orth} Let $f : X \r B$ be a projective morphism
  to a smooth quasi-projective variety $B$. Then, for all $i$, $f_*
  \circ h^{l} \circ f^* : \CH_i(B) \r \CH_{i+d_X-d_B-l}(B)$ is the
  zero map for all $l < d_X - d_B$. If moreover $X$ is smooth and $B$
  is projective, then
  \begin{center}
    $\Gamma_f \circ h^l \circ {}^t\Gamma_f = 0 \in \CH_{d_X-l}(B
    \times B)$ for all $l < d_X - d_B$.
  \end{center}
 \end{lemma}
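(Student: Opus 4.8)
The plan is to run the argument of Lemma \ref{dominant} essentially verbatim, the only change being that we use a linear section of codimension $l$ rather than one of dimension $d_B$; the punchline then becomes a vanishing by dimension rather than the computation of a nonzero multiple of the diagonal. Concretely, the inequality $l < d_X - d_B$ is equivalent to $d_X - l > d_B$, which is exactly the statement that the relevant cycles will live in a Chow group that vanishes for dimension reasons.

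First I would fix a linear section $\iota := \iota^l : H^l \hookrightarrow X$ of codimension $l$, so that $\dim H^l = d_X - l$, and set $g := f \circ \iota : H^l \r B$. Exactly as in the proof of Lemma \ref{dominant}, the identity $\iota^* \circ f^* = (f\circ\iota)^* = g^*$ (which is deduced there from the functoriality of Gysin maps and uses only that $B$ is nonsingular), combined with $\iota_* \circ \iota^* = h^l$ and the functoriality of proper pushforward, yields the identity $f_* \circ h^l \circ f^* = g_* \circ g^*$ on Chow groups.

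For the first assertion, fix $\gamma \in \CH_i(B)$. Since $[H^l]$ is the fundamental class of $H^l$, we may write $g^*\gamma = [H^l]\cdot g^*\gamma$, and the projection formula \cite[Prop. 8.1.1(c)]{Fulton} gives
$$ f_* \circ h^l \circ f^*\gamma \;=\; g_*\big([H^l]\cdot g^*\gamma\big) \;=\; g_*[H^l]\cdot\gamma. $$
Now $g_*[H^l]$ lies in $\CH_{d_X-l}(B)$, and since $d_X - l > d_B = \dim B$ this group is zero; hence $g_*[H^l] = 0$ and $f_* \circ h^l \circ f^*\gamma = 0$, as claimed. For the motivic statement, assume in addition that $X$ is smooth and $B$ is projective. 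As in Lemma \ref{dominant}, the identity $h^l = \Gamma_\iota \circ {}^t\Gamma_\iota$ gives $\Gamma_f \circ h^l \circ {}^t\Gamma_f = \Gamma_g \circ {}^t\Gamma_g \in \CH_{d_X-l}(B\times B)$, and the same refined-intersection computation shows that $\Gamma_g \circ {}^t\Gamma_g$ is supported on the diagonal of $B\times B$; it is therefore pushed forward from a class in $\CH_{d_X-l}(\Delta_B)$. Since $\Delta_B \cong B$ has dimension $d_B < d_X - l$, this group vanishes, whence $\Gamma_f \circ h^l \circ {}^t\Gamma_f = 0$.

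The computations are entirely parallel to those of Lemma \ref{dominant}; the genuine input is only the numerical inequality $d_X - l > d_B$, which forces the ambient Chow group to be zero. Accordingly, the one point that deserves checking is that the reduction $f_* \circ h^l \circ f^* = g_* \circ g^*$ (and, in the motivic case, the support computation for $\Gamma_g \circ {}^t\Gamma_g$) goes through even though $g$ here has positive relative dimension rather than being generically finite. This presents no difficulty, since the functoriality of Gysin maps and the projection formula invoked above are insensitive to the fibre dimension of $g$; the generic degree $n$ of Lemma \ref{dominant} simply plays no role, its place being taken by the vanishing of $\CH_{d_X-l}(B)$.
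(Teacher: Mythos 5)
Your proof is correct, but for the first statement it takes a genuinely different route from the paper's. The paper never factors the operation through a linear section: it tracks supports under refined intersection, noting that $f^*\alpha$ is represented by a class on $f^{-1}(|\alpha|)$, hence $f_* \circ h^{l} \circ f^*\alpha$ by a class in $\CH_{i+d_X-d_B-l}(|\alpha|)$, which vanishes because $\dim |\alpha| = i < i+d_X-d_B-l$. You instead rerun Lemma \ref{dominant} verbatim: write $h^l = \iota_*\iota^*$ for $\iota = \iota^l$, commute $\iota^*$ past $f^*$ via Gysin functoriality, and apply the projection formula, reducing everything to the single vanishing $g_*[H^l] = 0$ in $\CH_{d_X-l}(B)$, which holds since $d_X - l > d_B$. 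Both are dimension counts and both are valid; your use of $\iota^* \circ f^* = (f\circ\iota)^*$ in arbitrary codimension is legitimate, since the paper's verification of this identity in Lemma \ref{dominant} nowhere uses that the section has dimension $d_B$, and you rightly observe that neither surjectivity of $f$ nor generic finiteness of $g$ plays any role. What your version buys is uniformity: Lemmas \ref{dominant} and \ref{orth} become two instances of the one computation $g_*g^*\gamma = g_*[H]\cdot \gamma$, with $g_*[H'] = n[B]$, $n \neq 0$, in the one case and $g_*[H^l] = 0$ in the other. What the paper's version buys is independence from the choice of a linear section representing $h^l$ and from the pullback commutation, treating $h$ purely as iterated intersection with a Cartier divisor class, and it localises the vanishing on the support of each individual $\alpha$. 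For the correspondence statement your argument is essentially the paper's: the paper runs the dimension count directly on $B \times X \times X \times B$, where the refined intersection is a $(d_X-l)$-dimensional class supported on $\{(f(h),h,h,f(h)) : h \in H^l\}$ whose image under $p_{1,4}$ has dimension at most $d_B < d_X - l$, whereas you first contract to $\Gamma_g \circ {}^t\Gamma_g$ on $B \times H^l \times B$ as in Lemma \ref{dominant} and then kill the resulting $(d_X-l)$-dimensional class supported on the $d_B$-dimensional diagonal; this is the same count, packaged differently.
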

 \begin{proof} By refined intersection \cite[\S 8]{Fulton}, the
   pullback $f^*\alpha$ is represented by a well-defined class in
   $\CH_{i + d_X - d_B}(f^{-1}(|\alpha|))$ for any cycle $\alpha \in
   \CH_i(B)$. It follows that $h^{l} \circ f^* \alpha$ is represented
   by a well-defined class in $\CH_{i + d_X - d_B
     -l}(f^{-1}(|\alpha|))$.  Since $f|_{f^{-1}(|\alpha|)} :
   f^{-1}(|\alpha|) \r |\alpha|$ is proper, we see by proper
   pushforward that $f_* \circ h^{l} \circ f^* \alpha$ is represented
   by a well-defined cycle $\beta \in \CH_{i+d_X-d_B-l}(|\alpha|)$.
   But then, $\dim |\alpha| = i$ so that if $l < d_X - d_B$, then
   $\CH_{i+d_X-d_B-l}(|\alpha|)=0$.

   Let us now assume that $X$ and $B$ are smooth
   projective. Let $\iota^l :
   H^l \hookrightarrow X$ be a linear section of $X$ of codimension
   $l$, and let $h^l$ be the class of $(\iota^l \times \iota^l)(
   \Delta_{H^l})$ in $\CH_{d_X-l}(X \times X)$.  By definition we have
   $\Gamma_f \circ h^l \circ {}^t\Gamma_f =
   (p_{1,4})_*(p_{1,2}^*{}^t\Gamma_f \cdot p_{2,3}^*h^l \cdot
   p_{3,4}^*\Gamma_f)$, where $p_{i,j}$ denotes projection from $B
   \times X \times X \times B$ to the $(i,j)$-th factor. These
   projections are flat morphisms, therefore by flat pullback we have
   $p_{1,2}^*{}^t\Gamma_f = [{}^t\Gamma_f \times X \times B]$,
   $p_{2,3}^*h^l = [B \times \Delta_{H^l} \times B]$ and
   $p_{3,4}^*\Gamma_f = [B \times X \times \Gamma_f]$. By refined
   intersection, the intersection of the closed subschemes
   ${}^t\Gamma_f \times X \times B$, $B \times \Delta_{H^l} \times B$
   and $B \times X \times \Gamma_f$ of $B \times X \times X \times B$
   defines a $(d_X-l)$-dimensional class supported on their
   scheme-theoretic intersection $\{(f(h),h,h,f(h)) : h \in H^l \}
   \subset B \times X \times X \times B$. Since $f$ is projective,
   this is a closed subset of dimension $d_X-l$. Also its image under
   the projection $p_{1,4}$ has dimension at most $d_B$, which is
   strictly less than $d_X -l$ by the assumption made on $l$. The
   projection $p_{1,4}$ is a proper map and hence, by proper
   pushforward, we get that $(p_{1,4})_* [\{(f(h),h,h,f(h)) \in B
   \times X \times X \times B : h \in H^l \}] =0$.
 \end{proof}

 \begin{theorem} \label{T:mainrevisited} Let $f : X \r B$ be a
   surjective morphism of smooth projective varieties over $k$.
   Consider the following two morphisms of motives
  $$\Phi := \bigoplus_{i=0}^{d_X-d_B} h^{d_X -d_B -i} \circ {}^t\Gamma_f \ : \
  \bigoplus_{i=0}^{d_X-d_B} \h(B)(i) \longrightarrow \h(X)$$ and
  $$\Psi :=   \bigoplus_{i=0}^{d_X-d_B} \Gamma_f  \circ h^{i} \ : \
  \h(X) \longrightarrow \bigoplus_{i=0}^{d_X-d_B} \h(B)(i).$$ Then
  $\Psi \circ \Phi$ is an automorphism.
 \end{theorem}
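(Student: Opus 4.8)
The plan is to compute $\Psi \circ \Phi$ as a matrix of self-correspondences of $\bigoplus_{i=0}^{d_X-d_B} \h(B)(i)$ and to show that, with respect to the grading by $i$, this matrix is triangular with invertible scalar blocks on the diagonal. Indexing the summands of the source of $\Phi$ by $i$ and those of the target of $\Psi$ by $j$, the $(j,i)$-component of $\Psi \circ \Phi$ is the composite
$$(\Gamma_f \circ h^{j}) \circ (h^{d_X-d_B-i} \circ {}^t\Gamma_f) = \Gamma_f \circ h^{(d_X-d_B)+(j-i)} \circ {}^t\Gamma_f,$$
where I use that iterated intersection with $h$ composes, that is, $h^{a}\circ h^{b}=h^{a+b}$ as correspondences in $\CH_*(X\times X)$.

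Next I would read off three regimes from the exponent $l := (d_X-d_B)+(j-i)$. When $j<i$ we have $l<d_X-d_B$, so Lemma \ref{orth} shows the component vanishes. When $j=i$ we have $l=d_X-d_B$, and the component is the single correspondence $\Gamma_f \circ h^{d_X-d_B}\circ {}^t\Gamma_f$, which Lemma \ref{dominant} identifies with $n\cdot\Delta_B$ for a fixed positive integer $n$; viewed as a morphism $\h(B)(i)\to\h(B)(i)$ this is $n$ times the identity, and the same $n$ occurs on every diagonal block since it is literally the same correspondence. When $j>i$ we have $l>d_X-d_B$ and I have no control over the component, but this will turn out to be irrelevant.

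Therefore $\Psi\circ\Phi = n\cdot\mathrm{id} + N$, where $N$ gathers the components with $j>i$. Since $N_{ji}\neq 0$ only when $j>i$ and the index ranges over the finite set $\{0,\dots,d_X-d_B\}$, every iterated composite of components of $N$ forces a strictly increasing chain of indices, so $N^{d_X-d_B+1}=0$; that is, $N$ is nilpotent. As we work with $\Q$-coefficients and $n$ is a positive integer, hence invertible, the endomorphism $n\cdot\mathrm{id}+N = n\,(\mathrm{id}+n^{-1}N)$ is invertible, with inverse the finite Neumann series $n^{-1}\sum_{m\geq 0}(-n^{-1}N)^m$. Hence $\Psi\circ\Phi$ is an automorphism.

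The essential input is contained entirely in Lemmas \ref{dominant} and \ref{orth}; the remainder is the formal bookkeeping of the triangular structure and the elementary fact that a triangular endomorphism with invertible diagonal is invertible. I do not expect a genuine obstacle here: the one point that deserves a little care is the composition law $h^{a}\circ h^{b}=h^{a+b}$ for correspondences, which is needed so that the two ``$h$-blocks'' merge into a single power of $h$ and the exponent $l$ comes out as stated.
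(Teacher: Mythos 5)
Your proof is correct and takes essentially the same approach as the paper: the paper likewise computes the components of $\Psi \circ \Phi$ as $\Gamma_f \circ h^{d_X-d_B-(j-i)} \circ {}^t\Gamma_f$, applies Lemma \ref{dominant} to get $n \cdot \id$ on the diagonal and Lemma \ref{orth} to kill the entries on one side of it, and inverts the resulting $n \cdot \id + N$ with $N$ nilpotent via the finite geometric series. The one point you flag, the correspondence identity $h^a \circ h^b = h^{a+b}$, is also used implicitly in the paper's computation (where $h^l$ is defined via a single codimension-$l$ linear section), so your extra care there is a refinement of presentation, not a divergence in method.
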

 \begin{proof} 
   The endomorphism $\Psi \circ \Phi : \bigoplus_{i=0}^{d_X-d_B}
   \h(B)(i) \r \bigoplus_{i=0}^{d_X-d_B} \h(B)(i)$ can be represented
   by the $(d_X-d_B+1) \times (d_X-d_B+1)$-matrix whose
   $(i,j)^\mathrm{th}$-entries are the morphisms $$(\Psi \circ
   \Phi)_{i,j} = \Gamma_f \circ h^{d_X-d_B-(j-i)} \circ {}^t\Gamma_f \
   : \ \h(B)(j-1) \r \h(B)(i-1).$$ By Lemma \ref{dominant}, there is a
   non-zero integer $n$ such that the diagonal entries satisfy $(\Psi
   \circ \Phi)_{i,i} = n \cdot \id_{\h(B)(i-1)}$. By Lemma \ref{orth},
   $(\Psi \circ \Phi)_{i,j} = 0$ as soon as $j>i$. Therefore, $$N :=
   \, \id - \frac{1}{n} \cdot \Psi \circ \Phi$$ is a nilpotent
   endomorphism of $\bigoplus_{i=0}^{d_X-d_B} \h(B)(i) $ with
   $N^{d_X-d_B+1} = 0$. Let us define
   \begin{eqnarray*} \Xi & := & (n\cdot \id - n\cdot N)^{-1} \\ & = &
     \frac{1}{n} \cdot \big(\id + N + N^2 + \cdot + N^{d_X-d_B}\big).
\end{eqnarray*}
It then follows that $\Xi$ is the inverse of $\Psi \circ \Phi$.
\end{proof}
 In the situation of Theorem \ref{T:mainrevisited}, the morphism
$$\Theta := \Xi \circ \Psi$$ then defines a left-inverse to
$\Phi$ and the endomorphism $$p:= \Phi \circ \Theta = \Phi \circ \Xi
\circ \Psi \in \End(\h(X))$$ is an idempotent.

\begin{proposition} \label{P:selfdualidempotent} With the notations
  above, the idempotent $p \in \End(\h(X)) = \CH_{d_X}(X \times X)$
  satisfies $p = {}^tp$. Moreover, the morphism $\Psi \circ p : (X,p)
  \r \bigoplus_{i=0}^{d_X-d_B} \h(B)(i)$ is an isomorphism with
  inverse $p \circ \Phi \circ \Xi$.
\end{proposition}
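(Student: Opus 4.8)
The plan is to reduce both assertions to purely formal manipulations of the correspondences $\Phi$, $\Psi$ and $\Xi$, relying on only three inputs: that $h^m$ is a symmetric self-correspondence of $X$, that transposition is a contravariant involution compatible with composition and with taking inverses, and the relations $\Xi \circ \Psi \circ \Phi = \id$ and $\Psi \circ \Phi \circ \Xi = \id$ furnished by Theorem~\ref{T:mainrevisited}. Throughout I write $e := d_X - d_B$ and denote the components $\Phi_i = h^{e-i} \circ {}^t\Gamma_f : \h(B)(i) \to \h(X)$ and $\Psi_i = \Gamma_f \circ h^i : \h(X) \to \h(B)(i)$.

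For the first claim I would begin by recording that $h^m = (\iota^m \times \iota^m)_*[\Delta_{H^m}]$ is symmetric, so ${}^t h^m = h^m$ since $\Delta_{H^m}$ is symmetric; combined with ${}^t({}^t\Gamma_f) = \Gamma_f$ and ${}^t(\alpha\circ\beta) = {}^t\beta \circ {}^t\alpha$ this yields ${}^t\Phi_i = \Gamma_f \circ h^{e-i} = \Psi_{e-i}$ and ${}^t\Psi_i = h^i \circ {}^t\Gamma_f = \Phi_{e-i}$. The point needing care is that transposing a morphism $\h(B)(i) \to \h(X)$, which lives in $\CH_{d_B+i}(B\times X)$, produces a correspondence in $\CH_{d_B+i}(X \times B)$, that is, a morphism $\h(X) \to \h(B)(e-i)$; thus transposition permutes the summands of $\bigoplus_{i=0}^{e} \h(B)(i)$ by the involution $i \mapsto e-i$. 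Since the index set $\{0,\dots,e\}$ is stable under this involution, source and target rebundle to the same object, and the componentwise identities above assemble into ${}^t\Phi = \Psi$ and ${}^t\Psi = \Phi$ as morphisms between $\h(X)$ and $\bigoplus_{i=0}^e \h(B)(i)$. Consequently ${}^t(\Psi\circ\Phi) = {}^t\Phi \circ {}^t\Psi = \Psi\circ\Phi$, so $\Psi\circ\Phi$ is self-transpose, and since transposition commutes with taking inverses the same holds for $\Xi = (\Psi\circ\Phi)^{-1}$. I would then conclude ${}^tp = {}^t\Psi \circ {}^t\Xi \circ {}^t\Phi = \Phi\circ\Xi\circ\Psi = p$.

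For the second claim I would first simplify $p \circ \Phi$: since $\Xi \circ \Psi \circ \Phi = \id$, one has $p \circ \Phi = \Phi\circ(\Xi\circ\Psi\circ\Phi) = \Phi$. Then, using $p^2=p$ and $p\circ\Phi=\Phi$, the composite $(\Psi\circ p)\circ(p\circ\Phi\circ\Xi)$ collapses to $\Psi\circ\Phi\circ\Xi = \id_{\bigoplus\h(B)(i)}$, while $(p\circ\Phi\circ\Xi)\circ(\Psi\circ p) = p\circ(\Phi\circ\Xi\circ\Psi)\circ p = p\circ p\circ p = p$, which is precisely $\id_{(X,p)}$ because $p$ is the identity endomorphism of the motive $(X,p)$. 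Hence $\Psi\circ p$ and $p\circ\Phi\circ\Xi$ are mutually inverse, giving the isomorphism $(X,p)\cong\bigoplus_{i=0}^{e}\h(B)(i)$.

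The only genuinely delicate point is the bookkeeping in the second paragraph: I must verify that the reindexing $i\mapsto e-i$ induced by transposition is compatible with the Tate twists, so that ${}^t\Phi$ really equals $\Psi$ on the nose, and not merely up to a permutation of summands that only happen to look alike. Everything else is formal, resting on the invertibility of $\Psi\circ\Phi$ established in Theorem~\ref{T:mainrevisited}.
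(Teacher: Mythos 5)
Your proof is correct. The second assertion is handled exactly as in the paper: the identities $(\Psi\circ p)\circ(p\circ\Phi\circ\Xi)=\Psi\circ\Phi\circ\Xi\circ\Psi\circ\Phi\circ\Xi=\id$ and $(p\circ\Phi\circ\Xi)\circ(\Psi\circ p)=p\circ p\circ p=p=\id_{(X,p)}$ are the same formal manipulations, resting only on $\Xi$ being a two-sided inverse of $\Psi\circ\Phi$. Where you genuinely diverge is in the proof of $p={}^tp$. The paper never transposes $\Phi$ or $\Psi$ individually: it writes $p=\frac{1}{n}\,\Phi\circ\bigl(\id+N+\cdots+N^{d_X-d_B}\bigr)\circ\Psi$ with $N=\id-\frac{1}{n}\Psi\circ\Phi$, observes that this exhibits $p$ as a $\Q$-linear combination of powers of $\Phi\circ\Psi$, and thereby reduces the claim to the symmetry of the single cycle $\Phi\circ\Psi=\sum_{i=0}^{d_X-d_B}h^{d_X-d_B-i}\circ{}^t\Gamma_f\circ\Gamma_f\circ h^{i}\in\CH_{d_X}(X\times X)$, which is visibly fixed by transposition since ${}^th=h$ and $i\mapsto d_X-d_B-i$ permutes the summands. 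This keeps the whole verification inside $\CH_{d_X}(X\times X)$, where ${}^t$ is the unambiguous flip of cycles on $X\times X$, and so sidesteps precisely the bookkeeping you rightly flag as the delicate point: making sense of transposition for morphisms into and out of the twisted sum $\bigoplus_{i}\h(B)(i)$. Your route instead proves the stronger adjunction-type identities ${}^t\Phi=\Psi$ and ${}^t\Psi=\Phi$, and your degree count is the correct justification: $\Phi_i\in\CH_{d_B+i}(B\times X)$, so ${}^t\Phi_i\in\CH_{d_B+i}(X\times B)=\Hom\bigl(\h(X),\h(B)(d_X-d_B-i)\bigr)$, the index set is stable under $i\mapsto d_X-d_B-i$, and since transposition is an anti-automorphism commuting with inverses you get ${}^t(\Psi\circ\Phi)=\Psi\circ\Phi$, ${}^t\Xi=\Xi$ and ${}^tp=p$. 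Both arguments ultimately rest on the same two symmetries, ${}^th^m=h^m$ and ${}^t({}^t\Gamma_f)=\Gamma_f$; yours buys the cleaner structural statement that $\Phi$ and $\Psi$ are transposes of one another (hence self-transposedness of $\Psi\circ\Phi$ and $\Xi$ as a bonus), while the paper's polynomial trick buys economy, avoiding any discussion of how ${}^t$ interacts with Tate twists and direct sums.
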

\begin{proof}
  The second claim consists of the following identities: $\Psi \circ p
  \circ p \circ \Phi \circ \Xi = \Psi \circ p \circ \Phi \circ \Xi =
  \Psi \circ \Phi \circ \Xi \circ \Psi \circ \Phi \circ \Xi = \id
  \circ \id = \id$ and $p \circ \Phi \circ \Xi \circ \Psi \circ p = p
  \circ p \circ p = p$.

  As for the first claim, we have $$p = \frac{1}{n} \cdot \Phi \circ
  \big(1+N+\ldots +N^{d_X-d_B}\big) \circ \Psi.$$ Recall that $N = \id
  - \frac{1}{n}\cdot \Psi \circ \Phi$, so that it is enough to see
  that ${}^t(\Phi \circ \Psi) = \Phi \circ \Psi$. A straightforward
  computation gives $$\Phi \circ \Psi = \sum_{i=0}^{d_X-d_B}
  h^{d_X-d_B-i} \circ {}^t\Gamma_f \circ \Gamma_f \circ h^i.$$ We may
  then conclude by noting that the correspondence $h \in \CH_{d_X-1}(X
  \times X)$ satisfies $h = {}^th$.
\end{proof}

Finally, let us conclude with the following counterpart of Theorem
\ref{T:mainrevisited} that deals with the Chow groups of
quasi-projective varieties.

 \begin{proposition} \label{inj} Let $f : X \r B$ be a projective
   surjective morphism to a smooth quasi-projective variety $B$.  Then
   the map
  $$\Phi_* =  \bigoplus_{i=0}^{d_X-d_B} h^{d_X -d_B -i} \circ f^* \ : \
  \bigoplus_{i=0}^{d_X-d_B} \CH_{l-i}(B) \longrightarrow \CH_l(X)$$ is
  split injective and its left-inverse is a polynomial function in
  $f_*, f^*$ and $h$.
\end{proposition}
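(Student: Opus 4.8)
The plan is to mirror, at the level of Chow groups, exactly the matrix argument that proved Theorem \ref{T:mainrevisited}, and indeed to deduce the present Proposition directly from the two key lemmas already established for operators on Chow groups. Set
$$\Psi_* := \bigoplus_{i=0}^{d_X-d_B} f_* \circ h^i \ : \ \CH_l(X) \longrightarrow \bigoplus_{i=0}^{d_X-d_B} \CH_{l-i}(B),$$
which is the Chow-group incarnation of the motivic $\Psi$ of Theorem \ref{T:mainrevisited}, built solely from $f_*$ and $h$. I would then consider the composite $\Psi_* \circ \Phi_*$, an endomorphism of $\bigoplus_{i=0}^{d_X-d_B} \CH_{l-i}(B)$, and record its block structure.

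The heart of the matter is that the $(i,j)$-entry of this composite is precisely $f_* \circ h^{d_X-d_B-(j-i)} \circ f^*$, exactly the operator analysed in Lemmas \ref{dominant} and \ref{orth}. First I would invoke Lemma \ref{dominant} to see that each diagonal entry $(i=j)$ is multiplication by the single positive integer $n$, independent of $i$ and of $l$. Next I would invoke Lemma \ref{orth} to see that every strictly-upper-triangular entry (those with $j>i$, for which the relevant exponent $d_X-d_B-(j-i)$ is strictly less than $d_X-d_B$) vanishes. Thus $\Psi_* \circ \Phi_*$ is represented by a lower-triangular matrix of operators with constant diagonal $n$, so it equals $n\cdot\id - n\cdot N$ where $N$ is nilpotent with $N^{d_X-d_B+1}=0$ (a strictly lower-triangular $(d_X-d_B+1)\times(d_X-d_B+1)$ matrix of operators raised to the power $d_X-d_B+1$ vanishes).

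Consequently $\Psi_* \circ \Phi_*$ is invertible, with inverse
$$\Xi_* := \tfrac{1}{n}\big(\id + N + N^2 + \cdots + N^{d_X-d_B}\big),$$
exactly as in the proof of Theorem \ref{T:mainrevisited}. Then $\Xi_* \circ \Psi_*$ is a left-inverse to $\Phi_*$, which establishes split injectivity. Finally, I would observe that $\Psi_*$ is manifestly a polynomial in $f_*$ and $h$, that $N = \id - \tfrac{1}{n}\Psi_*\Phi_*$ is a polynomial in $f_*$, $f^*$ and $h$, and hence so is $\Xi_*$; therefore the left-inverse $\Xi_*\circ\Psi_*$ is a polynomial function in $f_*$, $f^*$ and $h$, as asserted. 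I do not expect any serious obstacle: the only point requiring a little care is that, unlike in the motivic setting, the operators act between \emph{different} Chow groups as $l$ varies, so one must check that the integer $n$ of Lemma \ref{dominant} and the vanishing of Lemma \ref{orth} are uniform in the degree $i$ (they are, since both lemmas are stated for all $i$ simultaneously), and that the bookkeeping of the grading shifts $\CH_{l-i}(B)$ matches the operator degrees of the $h^{d_X-d_B-(j-i)}$ blocks.
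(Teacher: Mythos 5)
Your proposal is correct and follows essentially the same route as the paper's own proof: the paper likewise forms $\Psi_* = \bigoplus_{j=0}^{d_X-d_B} f_* \circ h^j$, invokes Lemmas \ref{dominant} and \ref{orth} to identify $\Psi_* \circ \Phi_*$ with a lower-triangular matrix of operators whose diagonal entries act by multiplication by the single non-zero integer $n$, and inverts it exactly as in the proof of Theorem \ref{T:mainrevisited}, so that the left-inverse $\Xi_* \circ \Psi_*$ is a polynomial in $f_*$, $f^*$ and $h$. Your closing remark on uniformity is also consistent with the paper, since Lemma \ref{dominant} furnishes one integer $n$ valid for all degrees $i$ simultaneously.
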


\begin{proof} Thanks to Lemma \ref{dominant} and to Lemma \ref{orth},
  there is a non-zero integer $n$ such that
  $$f_* \circ h^i \circ f^* : \CH_l(B) \r \CH_{l+d_X-d_B
    - i}(B)$$ is multiplication by  $n$ if $i=d_X-d_B$
  and is zero if $i<d_X - d_B$.

  Let us write $\Psi_*$ for $ \bigoplus_{j=0}^{d_X-d_B} f_* \circ h^j :
  \CH_l(X) \rightarrow \bigoplus_{j=0}^{d_X-d_B} \CH_{l-j}(B)$.  In
  order to prove the injectivity of $\Phi_*$, it suffices to show that
  the composite
  $$\Psi_* \circ \Phi_* \ : \
  \bigoplus_{i=0}^{d_X-d_B} \CH_{l-i}(B) \longrightarrow \CH_l(X)
  \longrightarrow \bigoplus_{j=0}^{d_X-d_B} \CH_{l-j}(B)$$ is an
  isomorphism. But then, as in the proof of Theorem
  \ref{T:mainrevisited}, we see that $\Psi_* \circ \Phi_*$ can be
  represented by a lower triangular matrix whose diagonal entries'
  action on $\CH_{l-i}(B)$ is given by multiplication by $n$.
\end{proof}

\section{On the Chow groups of the fibres} \label{complex}

In this section, we fix a universal domain $\Omega$.  The following
statement was communicated to me by Burt Totaro.

\begin{lemma} \label{lemma} Let $f : X \r B$ be a morphism of
  varieties over $\Omega$ and let $F$ be a geometric generic fibre of
  $f$. Then there is a subset $U \subseteq B(\Omega)$ which is a
  countable intersection of nonempty Zariski open subsets such that
  for each point $b \in U$, there is an isomorphism from the field
  $\Omega$ to the field $\overline{\Omega(B)}$ such that this
  isomorphism turns the scheme $X_b$ over $\Omega$ into the scheme $F$
  over $\overline{\Omega(B)}$. In other words, a very general fibre of
  $f$ is isomorphic to $F$ as an abstract scheme.
  
  Consequently, for each point $p \in U$, $\CH_i(X_b)$ is isomorphic to
    $\CH_i(F)$ for all $i$.
\end{lemma}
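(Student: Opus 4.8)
The plan is to prove the abstract‑scheme statement first and then deduce the consequence on Chow groups, which is immediate once the isomorphism of schemes is in place. Let me set up notation: write $K := \Omega(B)$ for the function field of $B$, so that $X_{\eta_B} = X \times_B \Spec K$ is the scheme‑theoretic generic fibre, and $F = X_{\eta_B} \times_K \overline{K}$ is the geometric generic fibre, a scheme over $\overline{K}$. The key point is that $\Omega$ has infinite transcendence degree over the prime field, hence over $k$, and $\overline{K}$ is an algebraically closed field whose transcendence degree over $k$ equals $\operatorname{trdeg}_k \Omega$ (both being obtained by adjoining $d_B$ variables and then an algebraic closure, inside the same universal‑domain world). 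Since two algebraically closed fields of equal transcendence degree over a common subfield are abstractly isomorphic over that subfield, there is a $k$‑isomorphism $\sigma : \overline{K} \xrightarrow{\sim} \Omega$. The content of the lemma is that this abstract isomorphism can be arranged to send the generic fibre $F$ to an honest closed fibre $X_b$, for $b$ ranging over a very general subset of $B(\Omega)$.

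First I would reduce to a concrete spreading‑out statement. The scheme $X \to B$ is of finite type, so there is a finitely generated $k$‑subalgebra $A \subseteq \Omega$, with $R \subseteq A$ the coordinate ring of an affine chart of $B$, over which everything (the morphism $f$, the ambient projective embedding, the chosen hyperplanes) is defined; thus there is a model $f_A : X_A \to \Spec A$ with $X = X_A \times_A \Omega$. A choice of $k$‑point $b \in B(\Omega)$ is a $k$‑algebra homomorphism $R \to \Omega$, and the fibre $X_b$ is the base change of $X_A$ along it. Correspondingly, the generic fibre corresponds to the canonical map $R \hookrightarrow K \hookrightarrow \overline{K}$. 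The strategy is then to produce, for a very general $b$, a ring isomorphism $\Omega \xrightarrow{\sim} \overline{K}$ carrying the specialisation homomorphism $R \to \Omega$ to the generic homomorphism $R \to \overline{K}$; pulling back $X_A$ along the two homomorphisms then identifies $X_b$ with $F$ as abstract schemes.

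Next I would exhibit such $b$ by a transcendence‑basis argument. Choose a transcendence basis $t_1, \dots, t_{d_B}$ of $K$ over $k$ extracted from the generators of $R$, so that $K$ is algebraic over $k(t_1,\dots,t_{d_B})$ and $\overline{K} = \overline{k(t_1,\dots,t_{d_B})}$. Because $\Omega$ has infinite transcendence degree over $k$, we may pick elements $u_1,\dots,u_{d_B} \in \Omega$ that are algebraically independent over $k$ together with the coordinates of a chosen reference, and define $b$ to be the $\Omega$‑point specialising $t_i \mapsto u_i$; extending by any choice of algebraic closure inside $\Omega$ yields a $k$‑embedding $\overline{k(t_1,\dots,t_{d_B})} \hookrightarrow \Omega$, i.e. $\overline{K} \hookrightarrow \Omega$. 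The image of this embedding is an algebraically closed subfield of $\Omega$; since $\operatorname{trdeg}_k \Omega = \operatorname{trdeg}_k \overline{K}$ is impossible in general, I instead only need an \emph{isomorphism}, which exists because $\Omega$ itself is algebraically closed of transcendence degree equal to that of $\overline{K}$ over $k$ \emph{when $\operatorname{trdeg}$ matches} — and here the honest statement is that $\overline{K}$ and $\Omega$ are abstractly $k$‑isomorphic precisely because both are algebraically closed of the same (infinite) transcendence degree over $k$. This is where the hypotheses on $\Omega$ are essential. The condition ``$b$ avoids a countable union of proper closed subsets'' enters because, to keep the specialised $u_i$ algebraically independent and to keep the model étale/flat so that $X_b$ is the correct fibre, one must avoid the vanishing loci of countably many nonzero elements of $R$ (one for each algebraic relation and each defining equation); the very general locus $U$ is exactly the complement of this countable union, nonempty as a countable intersection of dense opens over the uncountable field $\Omega$.

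I expect the main obstacle to be making the passage from the abstract field isomorphism $\sigma : \overline{K} \xrightarrow{\sim} \Omega$ to a \emph{compatible} one that intertwines the generic homomorphism $R \to \overline{K}$ with the specialisation homomorphism $R \to \Omega$ defining $b$; the two embeddings of $R$ a priori have nothing to do with one another, and the role of the very general choice of $b$ is precisely to guarantee that a field isomorphism fixing $R$'s image can be found. Concretely, once $b$ is chosen so that $u_1,\dots,u_{d_B}$ are algebraically independent over $k$, the assignment $t_i \mapsto u_i$ extends to a $k$‑isomorphism of the purely transcendental subfields and then, since both $\overline{K}$ and $\Omega$ are algebraic closures of these (inside the universal‑domain context, with the transcendence degrees arranged to match), lifts by the isomorphism‑extension theorem for algebraically closed fields to the desired $\sigma$ carrying $R \to \overline{K}$ to $R \to \Omega$. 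Granting this, the final Chow‑group assertion is formal: an isomorphism of abstract $k$‑schemes $X_b \cong F$ induces an isomorphism $\CH_i(X_b) \cong \CH_i(F)$ for every $i$, since rational equivalence is preserved under isomorphism of schemes, completing the proof.
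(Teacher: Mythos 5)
Your overall strategy is the paper's (spread out, pick a very general point avoiding countably many closed subsets, identify the two fields by a transcendence-degree count, and transport Chow groups through the abstract scheme isomorphism), but there is a genuine gap in your choice of base field, and it is not a cosmetic one. You spread out over a finitely generated $k$-subalgebra and seek a field isomorphism $\sigma : \overline{\Omega(B)} \xrightarrow{\sim} \Omega$ \emph{over $k$}, justified by the claim that $\Omega$ ``has infinite transcendence degree over the prime field, hence over $k$.'' That inference is false: the paper's definition of a universal domain allows $k = \Omega = \mathbf{C}$, in which case $\operatorname{trdeg}_k \Omega = 0$ while $\operatorname{trdeg}_k \overline{\Omega(B)} = d_B > 0$, so no $k$-isomorphism $\overline{\Omega(B)} \cong \Omega$ exists at all. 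The same defect poisons your very general locus: to make the specialisation $R \to \Omega$ injective you must avoid the vanishing loci of \emph{all} nonzero elements of $R$, i.e.\ all proper closed subvarieties of the chart defined over the base field; this family is countable only if that field is countable. Over an uncountable $k$ it cannot be packaged as a countable intersection of opens (and for $k = \Omega$ it is vacuously impossible, since every $\Omega$-point kills a maximal ideal of positive codimension). Your text visibly senses the trouble --- witness the incoherent sentence ``since $\operatorname{trdeg}_k \Omega = \operatorname{trdeg}_k \overline{K}$ is impossible in general, I instead only need an isomorphism'' --- but does not repair it; the surrounding claims (a transcendence basis of $\Omega(B)$ over $k$ with $d_B$ elements, $\Omega(B)$ algebraic over $k(t_1,\dots,t_{d_B})$, $\overline{K} = \overline{k(t_1,\dots,t_{d_B})}$) are likewise wrong, since they drop the $\Omega$-part of the transcendence degree.

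The paper's proof fixes exactly this point: since $f$ is of finite type, one descends to a \emph{countable} subfield $K \subset \Omega$ (unrelated to $k$) with a model $f_0 : X_0 \to B_0$ over $K$. Then $B_0$ has only countably many proper closed $K$-subschemes, so $U$ (the $\Omega$-points of $B$ not lying over any of them) is an honest countable intersection of nonempty opens; for $b \in U$ the map $\Spec \Omega \to B_0$ is dominant and factors through $\eta_{B_0}$, so both $X_b$ and $F$ are pullbacks of $(X_0)_{\eta_{B_0}}$ along embeddings of $K(B_0)$ into $\Omega$ and $\overline{\Omega(B)}$ respectively; and the isomorphism $\overline{\Omega(B)} \cong \Omega$ is taken to fix only the countable field $K(B_0)$, over which both fields are algebraically closed of equal infinite transcendence degree. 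A second, smaller slip in your write-up: nonemptiness of $U$ does not follow from a Baire-type argument over ``the uncountable field $\Omega$'' ($\Omega$ may be countable, e.g.\ $\overline{\Q(t_1,t_2,\dots)}$); it follows from the infinite transcendence degree of $\Omega$ over the countable field $K(B_0)$, which supplies an embedding $K(B_0) \hookrightarrow \Omega$ and hence a point of $U$. Your closing deduction for Chow groups is fine and agrees with the paper: an isomorphism of abstract schemes induces mutually inverse proper pushforwards on $\CH_i$.
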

\begin{proof}
  There exist a countable subfield $K \subset \Omega$ and varieties
  $X_0$ and $B_0$ defined over $K$ together with a $K$-morphism $f_0 :
  X_0 \r B_0$ such that $f = f_0 \times_{\Spec K} \Spec \Omega$. Let
  us define $U \subseteq B(\Omega)$ to be $\bigcap_{Z_0}
  (B_0\backslash Z_0)_\Omega (\Omega)$, where the intersection runs
  through all proper $K$-subschemes $Z_0$ of $B_0$. Note that there
  are only countably many such subschemes of $B_0$ and that $U$ is the
  set of $\Omega$-points of $B = B_0 \times_{\Spec K} \Spec \Omega$
  that do not lie above a proper Zariski-closed subset of $B_0$.
      
  Let now $b : \Spec \Omega \r B$ be a $\Omega$-point of $B$ that lies
  in $U$, \emph{i.e.} such that the composite map $\beta : \Spec
  \Omega \stackrel{b}{\longrightarrow} B \r B_0$ is dominant, or
  equivalently such that the composite map $\beta$ factors as
  $\eta_{B_0} \circ \alpha$ for some morphism $\alpha : \Spec \Omega
  \rightarrow \Spec K(B_0)$, where $\eta_{B_0} : \Spec K(B_0) \r B_0$
  is the generic point of $B_0$. Since $X$ is pulled back from $X_0$
  along $B \rightarrow B_0$, we see that $X_b$ the fibre of $f$ at $b$
  is the pull back of the generic fibre $(X_0)_{\eta_{B_0}}$ along
  $\alpha$.
  Consider then $ \overline{\eta_B} : \Spec \overline{\Omega(B)} \r B$
  a geometric generic point of $B$ such that $X_{ \overline{\eta_B}} =
  F$. Since the composite map $\Spec \overline{\Omega(B)} \r B \r B_0$
  factors through $\eta_{B_0} : \Spec K(B_0) \r B_0$, we see as before
  that $F$ is the pull-back of the generic fibre $(X_0)_{\eta_{B_0}}$
  along some morphism $\alpha' : \Spec \overline{\Omega(B)}
  \rightarrow \Spec K(B_0)$. The fields $\overline{\Omega(B)}$ and
  $\Omega$ are algebraically closed fields of infinite transcendence
  degree over $ K(B_0)$ and there thus exists an isomorphism
  $\overline{\Omega(B)} \cong \Omega$ fixing $K(B_0)$. Hence, the
  fibre $X_b$ identifies with $F$ after pullback by the isomorphism
  $\Spec \Omega \cong \Spec \overline{\Omega(B)}$ over $\Spec K(B_0)$.
  
  The last statement follows from the fact that the Chow groups of a
  variety $X$ over a field only depend on $X$ as a scheme. Precisely,
  if one denotes $\psi_b : X_b \rightarrow F$ an isomorphism of
  schemes, then the proper pushforward map $(\psi_b)_* : \CH_i(X_b)
  \rightarrow \CH_i(F)$ is an isomorphism with inverse
  $(\psi_b^{-1})_* : \CH_i(F) \rightarrow \CH_i(X_b)$.
\end{proof}

The following lemma will be useful to refer to.

\begin{lemma} \label{L:universal} Let $f : X \r B$ be a projective
  surjective morphism defined over $\Omega$ onto a quasi-projective
  variety $B$.  Assume that $\CH_l(X_b) = \Q$ (resp. $\CH_l(X_b)$ is
  finitely generated) for all $b \in B(\Omega)$. Then
  $\CH_l(X_{\eta_D}) = \Q$ (resp. $\CH_l(X_{\eta_D})$ is finitely
  generated) for all irreducible subvarieties $D$ of $X$.
\end{lemma}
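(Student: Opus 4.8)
The plan is to deduce the statement about the generic fibre $X_{\eta_D}$ from the corresponding statement about the \emph{geometric} generic fibre $X_{\overline{\eta}_D}$, to which Lemma \ref{lemma} applies directly, and then to descend from the geometric generic fibre to the generic fibre by a base-change argument. Throughout, write $L := \Omega(D)$ for the function field of $D$ and $\overline{L}$ for an algebraic closure, so that $X_{\eta_D}$ is a projective variety over $L$ (projective because $f$, hence its restriction over $D$, is projective) and $X_{\overline{\eta}_D} = X_{\eta_D} \times_L \overline{L}$.

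First I would restrict $f$ over $D$. The base change $f_D : f^{-1}(D) \r D$ is again projective and surjective, is defined over $\Omega$, and has geometric generic fibre exactly $F := X_{\overline{\eta}_D}$. Applying Lemma \ref{lemma} to $f_D$ produces a very general set $U \subseteq D(\Omega)$, which is nonempty since $\Omega$ is a universal domain, such that $\CH_l(X_b) \cong \CH_l(F)$ for every $b \in U$. For such $b$ the scheme $X_b$ is literally the fibre of the original $f$ over the point $b \in D(\Omega) \subseteq B(\Omega)$, so the hypothesis gives $\CH_l(X_b) = \Q$ (resp. finitely generated). Hence $\CH_l(X_{\overline{\eta}_D}) = \Q$ (resp. finitely generated).

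It then remains to descend this from $\overline{L}$ to $L$. Writing $\overline{L} = \varinjlim M$ as a filtered union of finite extensions $M/L$, one has $\CH_l(X_{\overline{\eta}_D}) = \varinjlim \CH_l(X_{\eta_D}\times_L M)$, and for each finite $M/L$ the transfer relation $p_* p^* = [M:L]\cdot \id$, valid with $\Q$-coefficients, shows that the flat pullback $\CH_l(X_{\eta_D}) \r \CH_l(X_{\eta_D}\times_L M)$ is split injective. Passing to the colimit, the flat pullback $\pi^* : \CH_l(X_{\eta_D}) \r \CH_l(X_{\overline{\eta}_D})$ is injective. In the finitely generated case this finishes the proof: $\CH_l(X_{\eta_D})$ embeds in the finite-dimensional space $\CH_l(X_{\overline{\eta}_D})$ and is therefore finite-dimensional.

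In the case $\CH_l(X_{\overline{\eta}_D}) = \Q$, injectivity only shows $\CH_l(X_{\eta_D})$ is $0$ or $\Q$, so the crux — and the main obstacle — is to exhibit a nonzero class, that is, to prove surjectivity of $\pi^*$. Here I would pick a cycle $W$ on some finite level $X_{\eta_D}\times_L M$ representing a generator of $\CH_l(X_{\overline{\eta}_D}) = \Q$ and consider its pushforward $p_*[W] \in \CH_l(X_{\eta_D})$ along the finite map $p : X_{\eta_D}\times_L M \r X_{\eta_D}$. Flat base change in the Cartesian square relating $p$ and $\pi$ (whose upper-left corner is a disjoint union of copies of $X_{\overline{\eta}_D}$) yields $\pi^* p_*[W] = \sum_\sigma [\sigma W]$, a positive-multiplicity sum over the conjugates of $W$, the inseparable degree being invertible over $\Q$ and causing no trouble. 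Since $X_{\overline{\eta}_D}$ is projective, the functional $\alpha \mapsto \deg(h^l\cdot \alpha)$ obtained by intersecting $l$ times with a hyperplane class $h$ and taking degrees is strictly positive on every $l$-dimensional subvariety, hence nonzero and therefore injective on $\CH_l(X_{\overline{\eta}_D}) = \Q$; applied to $\sum_\sigma[\sigma W]$ it returns a strictly positive number, so $\pi^* p_*[W] \neq 0$ and a fortiori $p_*[W] \neq 0$. Thus $\CH_l(X_{\eta_D}) \neq 0$, and combined with the embedding into $\Q$ we conclude $\CH_l(X_{\eta_D}) = \Q$. The delicate point is precisely this final transfer-plus-positivity argument separating $0$ from $\Q$; the rest of the proof is formal.
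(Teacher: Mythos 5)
Your proof is correct and follows essentially the same route as the paper: you apply Lemma \ref{lemma} to the restriction $X_D = X \times_B D$ to identify $\CH_l(X_{\overline{\eta}_D})$ with $\CH_l(X_b)$ for a suitable very general closed point $b \in D(\Omega)$, and then descend from $\overline{\Omega(D)}$ to $\Omega(D)$ by the transfer relation $p_*p^* = [M:L]\cdot \id$ over finite subextensions, which is exactly what the paper compresses into the phrase ``by a norm argument for Chow groups, the pullback map $\CH_l(X_{\eta_D}) \r \CH_l(X_{\overline{\eta}_D})$ is injective.'' Your closing transfer-plus-positivity step, which rules out $\CH_l(X_{\eta_D}) = 0$ and so upgrades split injectivity into $\Q$ to the equality $\CH_l(X_{\eta_D}) = \Q$, is a genuine detail the paper leaves implicit, and it is sound provided you take the generator $W$ to be the class of an actual $l$-dimensional subvariety (possible since Chow groups are spanned by such classes, and each has strictly positive degree against $h^l$).
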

\begin{proof} Let $D$ be an irreducible subvariety of $B$ and let
  $\overline{\eta}_D \r D$ be a geometric generic point of $D$. By
  Lemma \ref{lemma} applied to $X_D := X \times_B D$, there is a
  closed point $d \in D$ such that $\CH_l(X_{\overline{\eta}_D})$ is
  isomorphic to $\CH_l(X_d)$.  By assumption $\CH_l(X_d)=\Q$ (resp.
  $\CH_l(X_d)$ is finitely generated).  Therefore
  $\CH_l(X_{\overline{\eta}_D})=\Q$ (resp.
  $\CH_l(X_{\overline{\eta}_D})$ is finitely generated), too. By a
  norm argument for Chow groups, the pullback map $\CH_l(X_{{\eta}_D})
  \r \CH_l(X_{\overline{\eta}_D})$ is injective.  Hence
  $\CH_l(X_{{\eta}_D})=\Q$ (resp.  $\CH_l(X_{{\eta}_D})$ is finitely
  generated).
\end{proof}

The following definition is taken from Laterveer \cite{Laterveer}.

\begin{definition} \label{D:niveau} Let $X$ be a variety over $k$. The
  Chow group $\CH_i(X)$ is said to have \emph{niveau} $\leq r$ if
  there exists a closed subscheme $Y \subset X$ of dimension $i+r$
  such that the proper pushforward map $\CH_i(Y_\Omega) \r
  \CH_i(X_\Omega)$ is surjective.
\end{definition}

\begin{proposition} \label{general-generic} Let $f : X \rightarrow B$
  be a generically smooth, projective and dominant morphism onto a
  smooth quasi-projective variety $B$ defined over $\Omega$. Let $n$
  be a non-negative integer. The
  following statements are equivalent.
  \begin{enumerate}
  \item If $F$ is a general fibre, then $\CH_0(F)$ has niveau $\leq n$;
\item If $F$ is a very general fibre, then $\CH_0(F)$ has niveau $\leq n$;
\item If $F$ is a geometric generic fibre, then $\CH_0(F)$ has niveau
  $\leq n$.
  \end{enumerate}
\end{proposition}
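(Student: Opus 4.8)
The three conditions concern $\CH_0$ of the general, very general, and geometric generic fibres respectively, so the whole proposition is really about comparing these three notions of ``fibre'' for the niveau property. My plan is to prove the cyclic chain of implications $(3) \Rightarrow (1) \Rightarrow (2) \Rightarrow (3)$, using Lemma \ref{lemma} as the bridge between pointwise fibres and the geometric generic fibre. The key point throughout is that niveau $\leq n$ of $\CH_0$ means the zero-cycles are supported on a subscheme of dimension $\leq n$, so I will need to track a witnessing subscheme either through specialisation or through spreading out.

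For $(2) \Rightarrow (3)$ I would argue as follows. Lemma \ref{lemma}, applied to $f : X \r B$ over $\Omega$, produces a set $U \subseteq B(\Omega)$ that is a countable intersection of dense Zariski opens, such that every $X_b$ with $b \in U$ is isomorphic \emph{as an abstract scheme} to the geometric generic fibre $F$, and hence has $\CH_0(X_b) \cong \CH_0(F)$. A very general fibre is by definition one over a point of such a set, so $(2)$ says $\CH_0(F)$ has niveau $\leq n$ directly — the isomorphism of schemes transports a witnessing subscheme, so this implication is essentially immediate. The implication $(3) \Rightarrow (1)$ is the standard spreading-out/specialisation direction: a subscheme $Z \subseteq F$ of dimension $\leq n$ supporting $\CH_0(F)$ is defined over a finitely generated extension of $K(B_0)$ for some model $f_0 : X_0 \r B_0$ over a countable subfield $K \subset \Omega$, so it spreads out to a relative subscheme over a dense open $V \subseteq B$, and for $b$ in that open $V$ (i.e. for a general fibre) the specialisation $Z_b \subseteq X_b$ still has dimension $\leq n$ and, by the compatibility of Chow groups with specialisation together with the generic-smoothness hypothesis, still supports $\CH_0(X_b)$.

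For $(1) \Rightarrow (2)$ the implication is trivial set-theoretically, since a very general fibre is in particular a general fibre: the set $U$ of very general points is contained in every dense Zariski open over which $(1)$ asserts niveau $\leq n$. So the genuine content lies in $(3) \Rightarrow (1)$, where I must make the specialisation of the niveau witness precise.

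\textbf{The main obstacle} is the spreading-out step in $(3) \Rightarrow (1)$: I must show that a dimension bound and a surjectivity statement for $\CH_0$ of the geometric generic fibre descend to an actual Zariski-dense open set of closed fibres. The dimension bound spreads out without difficulty, but the surjectivity of $\CH_0(Z_b) \r \CH_0(X_b)$ for a \emph{general} $b$ is more delicate: one must ensure that the relations witnessing surjectivity over $\overline{\Omega(B)}$ are defined over a finitely generated field and specialise compatibly, and that no new zero-cycle classes appear in nearby fibres outside the support. Here the hypothesis that $f$ is \emph{generically smooth} is what I expect to use to control the behaviour of $\CH_0$ under specialisation over the smooth locus, and Lemma \ref{lemma} again guarantees that the abstract-scheme structure of the generic fibre is faithfully reflected over the countable intersection of opens, pinning down exactly which closed fibres inherit the niveau bound.
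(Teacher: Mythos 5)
Your implications $(1) \Rightarrow (2)$ and $(2) \Rightarrow (3)$ are fine and coincide with the paper's argument: the first is formal, and the second is exactly the transport of a witnessing subscheme through the abstract scheme isomorphism of Lemma \ref{lemma} (one minor point worth making explicit: you must intersect the countable family of opens produced by Lemma \ref{lemma} with the countable family witnessing $(2)$, which is harmless since $\Omega$ is a universal domain). The problem is $(3) \Rightarrow (1)$, the only implication with real content, and your proposal stops exactly where the work begins --- you name the obstacle (``one must ensure that the relations witnessing surjectivity\dots specialise compatibly'') and then write that you ``expect'' generic smoothness to handle it, without an argument.

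The gap is genuine, because the step as you describe it would fail: surjectivity of $\CH_0(Y) \r \CH_0(F)$ is not finitely much data. The group $\CH_0(F)$ is in general not finitely generated, and the rational equivalences witnessing surjectivity are defined over larger and larger finitely generated extensions of $\Omega(B)$, so there is no single dense open $V \subseteq B$ over which ``the relations'' spread out; likewise, showing that every class in $\CH_0(X_b)$ is a specialisation from the generic fibre would require a separate point-lifting argument (Hensel's lemma over the smooth locus) that you do not supply. The paper's proof circumvents all of this by first converting the support condition into a \emph{single cycle identity}: by the Bloch--Srinivas decomposition of the diagonal \cite{BS}, $\Delta_F = \Gamma_1 + \Gamma_2$ in $\CH_{\dim F}(F \times F)$, with $\Gamma_1$ supported on $F \times Y$ and $\Gamma_2$ supported on $D \times F$ for some divisor $D \subset F$. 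This identity involves only finitely many cycles, hence is defined over a Galois extension $K/\Omega(B)$; choosing an \'etale morphism $U \r B$ with $\Omega(U) = K$ such that $f$ is smooth over $U$ (this is where generic smoothness actually enters, not where you placed it), the identity specialises by \cite[\S 20.3]{Fulton} to a decomposition on $X_u \times X_u$ for every $\Omega$-point $u$ of $U$, and letting it act on zero-cycles shows $\CH_0(X_u)$ is supported on $Y_u$; since the image of $U \r B$ contains a dense open, this yields $(1)$. Without the decomposition of the diagonal, or some equivalent device packaging the surjectivity into finitely much specialisable data, your spreading-out plan cannot be completed as stated.
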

\begin{proof}
  The implication $(1) \Rightarrow (2)$ is obvious. Let us prove $(2)
  \Rightarrow (3)$. Let $X_b$ be a very general fibre and $F$ a
  geometric generic fibre of $f$, and, by Lemma \ref{lemma}, let
  $\psi_b : X_b \rightarrow F$ be an isomorphism of schemes. Assume
  that there is a closed subscheme $Z$ of dimension $\leq r$ in $X_b$,
  for some integer $r$, such that the proper pushforward $\CH_0(Z)
  \rightarrow \CH_0(X_b)$ is surjective. Then, denoting $Z'$ the image
  of $Z$ in $F$ under $\psi_b$, functoriality of proper pushforwards
  implies that $\CH_0(Z') \rightarrow \CH_0(F)$ is surjective. We may
  then conclude by noting that the subscheme $Z'$ has dimension $\leq
  r$ in $F$.
 
  As for $(3) \Rightarrow (1)$, let $Y$ be a subvariety of $F$ defined
  over $\overline{\eta}_B$ such that $\CH_0(Y) \r \CH_0(F)$ is
  surjective. The technique of decomposition of the diagonal of
  Bloch--Srinivas \cite{BS} gives $\Delta_F = \Gamma_1 + \Gamma_2 \in
  \CH_{\dim F}(F \times F)$, where $\Gamma_1$ is supported on $F
  \times Y$ and $\Gamma_2$ is supported on $D\times F$ for some
  divisor $D$ in $F$. Consider a Galois extension $K/\Omega(B)$ over
  which the above decomposition and the morphism $Y \r F$ are defined,
  and consider an \'etale morphism $U \r B$ with $\Omega(U)=K$ such
  that $f$ restricted to $U$ is smooth. Let $u$ be a $\Omega$-point of
  $U$ and let $X_u$ be the fibre of $f$ over $u$.  Then the
  decomposition $\Delta_F = \Gamma_1 + \Gamma_2$ specialises \cite[\S
  20.3]{Fulton} on $X_u \times X_u$ to a similar decomposition, where
  $\Gamma_1|_{X_u \times X_u}$ is supported on $X_u \times Y_u$ and
  $\Gamma_2|_{X_u \times X_u}$ is supported on $D_u \times X_u$.
  Letting it act on zero-cycles, we see that $\CH_0(X_u)$ is supported
  on $Y_u$.
\end{proof}

\begin{remark}
  When $n=0$ or $n=1$, the statements of Proposition
  \ref{general-generic} are further equivalent to $\CH_0(F)$ having
  niveau $\leq n$ for $F$ the generic fibre of $f$. Indeed, if $X$ is
  a smooth projective variety such that $\CH_0(X)$ has niveau $\leq
  1$, then $\CH_0(X)$ is supported on a one-dimensional linear section
  \cite[Proposition 1.6]{Jannsen}. In particular, $\CH_0(X)$ is
  supported on a one-dimensional subvariety of $X$ which is defined
  over a field of definition of $X$. Note that, for general $n$, it is
  a consequence of the Lefschetz hyperplane theorem and of the
  Bloch--Beilinson conjectures that if $\CH_0(X)$ has niveau $\leq n$,
  then $\CH_0(X)$ is supported on an $n$-dimensional linear section of
  $X$.
\end{remark}

\section{A generalisation of the projective bundle formula for Chow
  groups} \label{hypsection}

We establish a formula that is analogous to the projective bundle
formula for Chow groups.
Our formula holds for flat morphisms, rather than Zariski
locally trivial morphisms as is the case for the projective bundle
formula.  However, since a flat morphism does not have any local
sections in general, it only holds with rational coefficients.

\begin{proposition} \label{P:surj} Let $f : X \r B$ be a flat projective
  surjective morphism of quasi-projective varieties.  Let $l \geq 0$
  be an integer.  Assume that $$\CH_{l-i}(X_{\eta_{B_i}})= \Q$$ for
  all $0 \leq i \leq \min (l,d_B)$ and for all closed irreducible
  subschemes $B_i$ of $B$ of dimension $i$, where $\eta_{B_i}$ is the
  generic point of $B_i$.

  Then the map
  $$\Phi_* =  \bigoplus_{i=0}^{d_X-d_B} h^{d_X -d_B -i} \circ f^* \ : \
  \bigoplus_{i=0}^{d_X-d_B} \CH_{l-i}(B) \longrightarrow \CH_l(X)$$ is
  surjective.
\end{proposition}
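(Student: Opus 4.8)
The plan is to prove surjectivity of $\Phi_*$ by descending induction on the dimension of a closed subscheme supporting a given cycle class, using the localisation sequence for Chow groups to reduce to the generic fibres over subvarieties of $B$, where the hypothesis $\CH_{l-i}(X_{\eta_{B_i}})=\Q$ becomes available. We already know from Proposition \ref{inj} that $\Phi_*$ is split injective, so the content here is entirely the surjectivity.

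\medskip

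First I would set up the inductive scheme. Given a class $\alpha \in \CH_l(X)$, I want to show it lies in the image of $\Phi_*$. Write $W = f(|\alpha|) \subseteq B$ for the (closed) image of the support, an irreducible subvariety of some dimension $i$; the flatness of $f$ guarantees that $f^{-1}(W) \r W$ has fibre dimension $d_X - d_B$ everywhere, so $|\alpha| \subseteq f^{-1}(W)$ forces $l \leq i + (d_X-d_B)$, i.e. $i \geq l - (d_X - d_B)$. The base case of the induction is $i = d_B$, where $W = B$. The key localisation input is the exact sequence
$$ \CH_l(f^{-1}(W')) \longrightarrow \CH_l(X) \longrightarrow \CH_l(X_{\eta_W}) \longrightarrow 0 $$
obtained as a limit of the localisation sequences over open subsets $U \subseteq W$, where $W'$ ranges over proper closed subsets of $W$ and $X_{\eta_W}$ is the fibre over the generic point of $W$. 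The plan is: push $\alpha$ to its image in $\CH_l(X_{\eta_W})$; use the hypothesis that this group is $\Q$ (after setting $i = \dim W$, the relevant assumption is $\CH_{l-(d_B - i)}$ of the generic fibre over an $i$-dimensional base being spanned by a linear section class) to recognise the image as a multiple of a power of $h$ applied to a linear section; then correct $\alpha$ by a class coming from $\Phi_*$ so that the corrected class dies in $\CH_l(X_{\eta_W})$ and hence, by exactness, comes from $\CH_l(f^{-1}(W'))$ for a \emph{smaller-dimensional} base $W'$, to which the inductive hypothesis applies.

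\medskip

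The technical heart is translating ``$\CH_*(X_{\eta_W}) = \Q$'' into a statement about explicit generators of $\CH_l(X)$ modulo cycles supported over smaller strata. The point is that when the generic fibre has one-dimensional Chow group in the relevant degree, that group is generated by the class of an appropriate linear section of the fibre, and such a linear section of $X_{\eta_W}$ spreads out, after multiplying by a suitable power of $h$ and pulling back a cycle on $W$ via $f^*$, to precisely a class in the image of $\Phi_*$. Concretely, I expect the induction step to read: modulo $\im \Phi_*$ and modulo $\CH_l(f^{-1}(W'))$ for $\dim W' < \dim W$, the class $\alpha$ is determined by a class in $\CH_i(W)$ (the ``multiplicity along the generic fibre''), and the summand $h^{d_X - d_B - (d_B - i)} \circ f^*$ of $\Phi_*$ applied to that class in $\CH_i(W) \subseteq \CH_i(B)$ matches it; here one must be careful that $f^*$ of a cycle supported on $W$ stays supported on $f^{-1}(W)$, which refined pullback guarantees, and that the leading power of $h$ indeed cuts the fibre dimension down correctly.

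\medskip

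The main obstacle, and the step I would spend the most care on, is the bookkeeping of indices together with the passage to the limit in the localisation sequence. One must check that the hypothesis is invoked at the right degree for each stratum: over an $i$-dimensional subvariety $B_i$, the cycle classes of dimension $l$ on $f^{-1}(B_i)$ correspond to classes of dimension $l-i$ on the $(d_X-d_B+i)$-dimensional total space, so that the generic fibre contributes in degree $l - i$, matching exactly the stated assumption $\CH_{l-i}(X_{\eta_{B_i}}) = \Q$ for $0 \leq i \leq \min(l, d_B)$. The range of $i$ in the hypothesis and the range $i \geq l - (d_X - d_B)$ forced by flatness must be reconciled, and one needs that the spreading-out argument genuinely lands a correction in the image of the $(d_X - d_B - (d_B - i))$-th summand of $\Phi_*$; verifying that the power of $h$ is non-negative and that no stratum is missed is where a sign or off-by-one error would most easily creep in. Once the indices are aligned, the descending induction terminates because each step strictly decreases $\dim W$, and the base of the induction (cycles already supported over the generic point of $B$ itself) is handled directly by Lemma \ref{dominant}.
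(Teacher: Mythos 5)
Your overall strategy is the same as the paper's, just reorganised: the paper runs an induction on $d_B$, splitting off the generic fibre via the localisation sequence $\bigoplus_{D \in B^1} \CH_l(X_D) \r \CH_l(X) \r \CH_{l-d_B}(X_{\eta_B}) \r 0$, hitting the quotient with a dimension-$l$ linear section $Y$ (surjective onto $B$ by Lemma \ref{L:dominant}, with $[Y] = h^{d_X-l}\circ f^*[B]$ and $[Y_{\eta_B}] \neq 0$ by a degree argument), and then feeding the divisorial strata into the inductive hypothesis through the flat base-change identity $j'_{D*} \circ h^{d_X-d_B-i} \circ f_D^* = h^{d_X-d_B-i} \circ f^* \circ j_{D*}$. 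Your noetherian induction on $\dim f(|\alpha|)$ is the same d\'evissage. However, as written, your bookkeeping contains errors that would derail the execution, not merely ``off-by-one'' risks of the kind you flag.

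Concretely: (a) your displayed localisation sequence is false --- it must read $\bigoplus_{W'} \CH_l(f^{-1}(W')) \r \CH_l(f^{-1}(W)) \r \CH_{l-i}(X_{\eta_W}) \r 0$ with $i = \dim W$; with $\CH_l(X)$ in the middle exactness fails, and the right-hand degree is $l-i$, not $l$. (b) You then invoke the hypothesis at the wrong degree: ``$\CH_{l-(d_B-i)}$ of the generic fibre over an $i$-dimensional base'' is simply not among the assumptions unless $i = d_B$; the correct degree is $l-i$, which your final paragraph states, contradicting the middle one. (c) Consequently the correction term is misidentified: a class in $\CH_i(B)$ enters $\Phi_*$ only through the summand indexed by $j = l-i$, i.e. via $h^{d_X-d_B-(l-i)} \circ f^*$, whereas your $h^{d_X-d_B-(d_B-i)} \circ f^*$ applied to $\CH_i(W)$ is not a component of $\Phi_*$ unless $l = d_B$, so as written the induction step does not produce an element of $\im \Phi_*$. (d) ``Refined pullback'' is unavailable here: $B$ is not assumed smooth in this proposition, and $f^*$ is flat pullback; both the support statement and, more importantly, the fact that corrections built over $W$ land in $\im \Phi_*$ rest on flat base change $f^* \circ j_{W*} = j'_{W*} \circ f_W^*$ \cite[Prop.~1.7]{Fulton} together with the projection formula for $h$ --- this is exactly where flatness is used. (e) Lemma \ref{dominant} assumes $B$ smooth and concerns $f_* \circ h^{d_X-d_B} \circ f^*$, so it cannot handle your base case; what is needed is the direct positive-degree argument that $[Y_{\eta_W}] \neq 0$, and the $i=0$ instance of the hypothesis for the zero-dimensional strata. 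All of these are repairable without new ideas, but (a)--(c) are steps that fail as literally stated.
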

\begin{proof}
  The case when $d_B = 0$ is obvious. Let us proceed by induction on
  $d_B$. We have the localisation exact sequence $$\bigoplus_{D \in
    B^{1}} \CH_l(X_D) \longrightarrow \CH_l(X) \longrightarrow
  \CH_{l-d_B}(X_{\eta_B}) \longrightarrow 0,$$ where the direct sum is
  taken over all irreducible divisors of $B$. If $l \geq d_B$, let $Y$
   be a linear section of $X$ of dimension $l$.  By Lemma
  \ref{L:dominant}, $f|_Y : Y \r B$ is surjective. The restriction map
  $\CH_l(X) \rightarrow \CH_{l-d_B}(X_{\eta_B})$ is the direct limit
  of the flat pullback maps $\CH_l(X) \r \CH_l(X_U)$ taken over all
  open subsets $U$ of $B$; see \cite[Lemma 1A.1]{Bloch-lectures}.
  Therefore $\CH_l(X) \rightarrow \CH_{l-d_B}(X_{\eta_B})$ sends the
  class of $Y$ to the class of $Y_{\eta_B}$ inside
  $\CH_{l-d_B}(X_{\eta_B})$.  But then this class is non-zero because
  the restriction to $\eta_B$ of a linear section of $Y$ of dimension
  $d_B$ has positive degree.  Furthermore, if $[B]$ denotes the class
  of $B$ in $\CH_{d_B}(B)$, then the class of $Y$ is equal to $h^{d_X-
    l } \circ f^* [B]$ in $\CH_l(X)$.  Thus, since by assumption
  $\CH_{l-d_B}(X_{\eta_B}) = \Q$, the composite map
  $$\CH_{d_B}(B) \stackrel{h^{d_X-l} \circ f^*}{\longrightarrow} \CH_l(X)
  \r \CH_{l-d_B}(X_{\eta_B})$$ is surjective.

  Consider now the fibre square \begin{center} $ \xymatrix{ X_D
      \ar[d]_{f_D} \ar[r]^{j_D'} & X \ar[d]^{f} \\ D \ar[r]^{j_D} &
      B.}$
  \end{center} Then $f_D : X_D \r D$ is flat and its fibres above
  points of $D$ satisfy the assumptions of the theorem. Therefore, by
  the inductive assumption, we have a surjective map
  $$\bigoplus_{i=0}^{d_X-d_B} h^{d_X -d_B -i} \circ f_D^* \ : \
  \bigoplus_{i=0}^{d_X-d_B} \CH_{l-i}(D) \longrightarrow \CH_l(X_D).$$
  Furthermore, since $f$ is flat and $j_D$ is proper, we have the
  formula \cite[Prop. 1.7 \& Th. 6.2]{Fulton} $$j_{D*}' \circ h^{d_X
    -d_B -i} \circ f_D^* = h^{d_X -d_B -i} \circ f^* \circ j_{D*} \ :
  \ \CH_{l-i}(D) \r \CH_l(X).$$ Hence, the image of $\Phi_*$ contains
  the image of
  $$\bigoplus_{D \in B^1} \bigoplus_{i=0}^{d_X-d_B} j'_{D*} \circ
  h^{d_X -d_B -i} \circ f_D^* \ : \ \bigoplus_{D \in B^1}
  \bigoplus_{i=0}^{d_X-d_B} \CH_{l-i}(D) \longrightarrow \CH_l(X).$$
  Altogether, this implies that the map $\Phi_*$ is surjective.
\end{proof}

We can now gather the statements and proofs of Propositions \ref{inj}
and \ref{P:surj} into the following.

\begin{theorem} \label{main} Let $f : X \r B$ be a flat projective
  surjective morphism onto a smooth quasi-projective variety $B$ of
  dimension $d_B$.  Let $l \geq 0$ be an integer.   Assume
  that
  \begin{center}
    
  $\CH_{l-i}(X_b)= \Q$ for all $0 \leq i \leq \min (l,d_B)$ and for
  all points $b$ in $B(\Omega)$.
  \end{center}
  Then the map
  $$\Phi_* =  \bigoplus_{i=0}^{d_X-d_B} h^{d_X -d_B -i} \circ f^* \ : \
  \bigoplus_{i=0}^{d_X-d_B} \CH_{l-i}(B) \longrightarrow \CH_l(X)$$ is
  an isomorphism. Moreover the map $$\Psi_* =
  \bigoplus_{i=0}^{d_X-d_B} f_* \circ h^i \ : \ \CH_l(X)
  \longrightarrow \bigoplus_{i=0}^{d_X-d_B} \CH_{l-i}(B)$$ is also an
  isomorphism. \qed
\end{theorem}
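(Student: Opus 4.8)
The plan is to derive both assertions by gluing together the injectivity statement of Proposition \ref{inj} and the surjectivity statement of Proposition \ref{P:surj}. Since $B$ is smooth and quasi-projective and $f$ is flat projective surjective, both of those propositions apply to $f$; the only genuine work is to check that the fibrewise assumption of Theorem \ref{main}, which is phrased over the closed $\Omega$-points of $B$, implies the assumption on generic fibres demanded by Proposition \ref{P:surj}.

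First I would invoke Proposition \ref{inj}: as $B$ is smooth quasi-projective, it shows directly that $\Phi_*$ is split injective. From the proof of that proposition I would also record the stronger fact that the composite $\Psi_* \circ \Phi_*$ is represented by a lower-triangular matrix whose diagonal entries act on each summand $\CH_{l-i}(B)$ as multiplication by a fixed non-zero integer $n$; in particular $\Psi_* \circ \Phi_*$ is \emph{already} an isomorphism, a fact I will reuse at the very end.

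Next I would verify the hypothesis of Proposition \ref{P:surj}. Fix $i$ with $0 \leq i \leq \min(l,d_B)$. The assumption $\CH_{l-i}(X_b) = \Q$ for every $b \in B(\Omega)$ is precisely the input of Lemma \ref{L:universal} (read with the integer $l-i$ in place of $l$), whose conclusion is that $\CH_{l-i}(X_{\eta_{B_i}}) = \Q$ for every closed irreducible subscheme $B_i \subseteq B$, in particular for those of dimension $i$. This is exactly the hypothesis of Proposition \ref{P:surj}, so that proposition yields the surjectivity of $\Phi_*$. Combined with the injectivity from the previous step, $\Phi_*$ is an isomorphism. The second assertion then follows formally: since $\Psi_* \circ \Phi_*$ is an isomorphism and $\Phi_*$ is now known to be one, $\Psi_* = (\Psi_* \circ \Phi_*) \circ \Phi_*^{-1}$ is a composite of isomorphisms and hence an isomorphism.

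I expect the main obstacle to be the middle step: the passage from the assumption over \emph{all} closed $\Omega$-points of $B$ to the assumption on the generic points of \emph{all} subvarieties of $B$. This is the substance of Lemma \ref{L:universal}, which in turn rests on Totaro's Lemma \ref{lemma} (a very general fibre is abstractly isomorphic to the geometric generic fibre) together with the norm argument for Chow groups that makes the pullback $\CH_{l-i}(X_{\eta_{B_i}}) \r \CH_{l-i}(X_{\overline{\eta}_{B_i}})$ injective. One subtlety worth keeping in mind is that this argument only bounds $\CH_{l-i}(X_{\eta_{B_i}})$ above by $\Q$; its non-vanishing, needed to conclude equality with $\Q$ rather than merely an inclusion, is guaranteed because the restriction of a suitable linear section gives a non-zero cycle on the generic fibre, exactly as exploited in the proof of Proposition \ref{P:surj}.
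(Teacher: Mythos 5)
Your proposal is correct and follows essentially the same route as the paper: Lemma \ref{L:universal} converts the hypothesis on $\Omega$-points into the generic-fibre hypothesis of Proposition \ref{P:surj}, which combined with Proposition \ref{inj} (and the lower-triangular structure of $\Psi_* \circ \Phi_*$ from its proof) yields both isomorphisms, exactly as in the paper's one-line argument ``the theorem follows from a combination of Proposition \ref{inj} (and its proof) and Proposition \ref{P:surj}.'' Your closing remark on the non-vanishing of $\CH_{l-i}(X_{\eta_{B_i}})$, via the restriction of a linear section, is a worthwhile precision that the paper leaves implicit in Lemma \ref{L:universal}.
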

\begin{proof} Let $B_i$ be an irreducible closed subscheme of $B$ of
  dimension $i$ with $0 \leq i \leq \min (l,d_B)$. Since
  $\CH_{l-i}(X_b)= \Q$ for all points $b \in B(\Omega)$, Lemma
  \ref{L:universal} gives $\CH_{l-i}(X_{{\eta}_{B_i}})= \Q$. Thus the
  theorem follows from a combination of Proposition \ref{inj} (and its
  proof) and Proposition \ref{P:surj}.
\end{proof}

\section{On the motive of smooth projective varieties fibred by
  quadrics}
\label{quadrics}

Let us first recall the following result.

\begin{proposition}[Corollary 2.2 in \cite{VialCK}]
  \label{effective-coro} Let $m$ and $n$ be positive integers.  Let
  $(Y,q)$ be a motive over $k$ such that $\CH_i(Y_\Omega,q_\Omega) = 0$
  for all $i < n$ and $\CH_j(Y_\Omega, {}^tq_\Omega) = 0$ for all $j <
  m$. Then there exist a smooth projective variety $Z$ over $k$ of
  dimension $d_X - m - n$ and an idempotent $r \in
  \End(\h(Z))$ such that $(Y,q)$ is isomorphic to $(Z, r, n)$.\qed
\end{proposition}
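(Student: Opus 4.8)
The plan is to isolate a one-sided \emph{effectivity criterion} for motives and apply it to $(Y,q)$ and to the dual idempotent ${}^tq$; the two reductions combine to account for the Tate twist by $n$ and for the dimension drop by $m+n$. Throughout set $d = \dim Y$.

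The criterion I would prove first is: if $(V,q)$ is a motive with $V$ smooth projective of dimension $e$ and $\CH_i(V_\Omega, q_\Omega) = 0$ for all $i<n$, then $(V,q)$ is isomorphic to $(W,s,n)$ for some smooth projective $W$ of dimension $e-n$ and some idempotent $s \in \End(\h(W))$. The implication $\Leftarrow$ is immediate, since $\CH_i(W,s,n) = s_*\CH_{i-n}(W)$ vanishes once $i<n$; the content is $\Rightarrow$. To prove it I would run the decomposition of the diagonal of Bloch--Srinivas \cite{BS} for the idempotent $q$ in place of the diagonal $\Delta_V$, iterated through its higher Chow-group refinement (\cite{Laterveer}) so as to strip off the bottom $n$ layers. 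The outcome is a congruence in $\CH_e(V \times V)$ expressing $q$, up to rational equivalence, as a cycle supported on $W_0 \times V$ for some closed $W_0 \subseteq V$ of dimension $e-n$, modulo a term that acts as zero on $(V,q)$ and is absorbed using $q \circ q = q$ together with the nilpotence of correspondences that lower the support --- the same formal inversion that produces $\Xi$ in the proof of Theorem \ref{T:mainrevisited}. Resolving the singularities of $W_0$ and factoring the resulting cycle through $\h(\widetilde{W_0})$ then realises $(V,q)$ as a direct summand of $\h(\widetilde{W_0})(n)$, where the twist $(n)$ appears precisely because the factorisation passes through a variety of dimension $e-n$; this yields the desired $(W,s,n)$.

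With the criterion established I would apply it to $(Y,q)$, obtaining $(Y,q) \cong (W,s,n)$ with $\dim W = d-n$. The second hypothesis $\CH_j(Y_\Omega, {}^tq_\Omega) = 0$ for $j<m$ concerns the dual motive; transported across this isomorphism by Poincar\'e duality on $W$ it says exactly that $\CH_j(W_\Omega, {}^ts_\Omega) = 0$ for all $j<m$, that is, it is the criterion's hypothesis for the motive $(W,{}^ts)$ with parameter $m$. Applying the criterion to $(W,{}^ts)$ gives $(W,{}^ts) \cong (Z,r'',m)$ with $Z$ smooth projective of dimension $(d-n)-m$; dualising back and cancelling the twists produces an idempotent $r \in \End(\h(Z))$ with $(W,s) \cong (Z,r,0)$. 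Composing, $(Y,q) \cong (Z,r,n)$ with $\dim Z = d-m-n$, as required. That $r$ is a genuine idempotent and that the composite correspondences assemble into mutually inverse isomorphisms is then a purely formal matter, carried out exactly as in Proposition \ref{P:selfdualidempotent}.

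The main obstacle is the effectivity criterion, and within it the requirement that the decomposition of the diagonal of $q$ produce a support $W_0$ of the \emph{sharp} dimension $e-n$ and be \emph{defined over the base field $k$} rather than merely over $\Omega$. Two points need care. First, the vanishing is only assumed over the universal domain, so it must be descended to the function field of the geometric generic fibre before it can be spread out; this is precisely the role played by Lemma \ref{lemma} and the norm argument of Lemma \ref{L:universal}, which compare the Chow groups of a very general fibre with those of the geometric generic fibre. Second, one must verify that the non-effective error terms act nilpotently below the relevant level, so that they can be inverted away without enlarging the support. Once these are in hand, the two-sided bookkeeping --- in particular tracking the Tate twists through Poincar\'e duality so that both $m$ and $n$ enter with the right signs --- is routine, and the final idempotent is obtained by the same inversion trick already used for $\Xi$ in Theorem \ref{T:mainrevisited}.
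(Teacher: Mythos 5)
You should note first that the paper contains no proof of this statement: it is imported with a \qed{} from \cite{VialCK} (Corollary 2.2 there), so the only meaningful comparison is with the argument in that reference. Your reconstruction matches that argument in its architecture: a one-sided effectivity criterion proved by running the Bloch--Srinivas decomposition \cite{BS}, in its iterated form \cite{Laterveer}, on the idempotent $q$ rather than on the diagonal, followed by a second application to the transpose, with duality doing the bookkeeping. Your twist accounting is correct: since $\dim W = d_Y - n$, dualising $(Y,q) \cong (W,s,n)$ gives $(Y,{}^tq) \cong (W,{}^ts)$ with no residual twist, so the hypothesis on ${}^tq$ transports verbatim to $(W,{}^ts)$; a second application of the criterion and one more dualisation then yield $(Y,q) \cong (Z,r,n)$ with $\dim Z = d_Y - m - n$, exactly as in \cite{VialCK}. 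One small correction on the descent step: passing from the $\Omega$-vanishing to vanishing over the function fields of the successive supports uses only the injectivity of rational Chow groups under field extension (the norm argument invoked in Lemma \ref{L:universal}); Lemma \ref{lemma} plays no role here.

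Two mechanisms in your sketch are misdescribed, though neither is fatal to the plan. First, the non-effective error terms in the iterated decomposition are not ``inverted away'' by a nilpotence/Neumann-series trick as for $\Xi$ in Theorem \ref{T:mainrevisited} --- there are no invertible diagonal entries in play; they are absorbed by composing with $q$ itself, using $q = q \circ q$, each composition cutting the dimension of the support of the first factor by one, until $q$ is rationally equivalent to a cycle supported on $W_0 \times Y$ with $\dim W_0 \leq d_Y - n$. Second, once $q$ is factored as $a \circ b$ through $\h(\widetilde{W}_0)(n)$, the middle composite $b \circ a$ need not be idempotent; the standard repair, which is what makes the final step work, is that $q \circ q = q$ forces $(b\circ a)^{\circ 3} = (b \circ a)^{\circ 2}$, so $s := (b \circ a)^{\circ 2}$ is an idempotent and the mutually inverse morphisms $s \circ b \circ q$ and $q \circ a \circ s$ give $(Y,q) \cong (\widetilde{W}_0, s, n)$. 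This is a different formal lemma from Proposition \ref{P:selfdualidempotent}, which concerns self-duality of an already-split projector. Finally, two cosmetic points: $d_X$ in the statement is a typo for $d_Y$, and the decomposition argument only bounds $\dim Z \leq d_Y - m - n$; equality is arranged trivially, e.g.\ by enlarging $Z$ by a disjoint smooth projective variety carrying the zero idempotent.
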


The main result of this section is the following theorem.

\begin{theorem} \label{T:gen-intro-text} Let $f : X \r B$ be a flat
  morphism of smooth projective varieties over $k$. Assume that there
  exists a positive integer $n$ such that $\CH_l(X_b)=\Q$ for all $0 \leq l <
  n$ and for all points $b \in B(\Omega)$. Then there exists a smooth
  projective variety $Z$ of dimension $d_X - 2n$ and an idempotent $r
  \in \End(\h(Z))$ such that the motive of $X$ admits a direct sum
  decomposition $$\h(X) \cong  \bigoplus_{i=0}^{d_X-d_B}
  \h(B)(i) \oplus (Z,r,n).$$
\end{theorem}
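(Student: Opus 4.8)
The plan is to build the desired decomposition out of the split-injection machinery already established, and then to identify the complementary summand using the effectivity criterion of Proposition \ref{effective-coro}. First I would invoke Theorem \ref{T:mainrevisited}: since $f : X \r B$ is a surjective morphism of smooth projective varieties, the morphism $\Phi = \bigoplus_{i=0}^{d_X-d_B} h^{d_X-d_B-i} \circ {}^t\Gamma_f$ is split injective, with explicit left-inverse $\Theta = \Xi \circ \Psi$, and the associated idempotent $p = \Phi \circ \Theta \in \End(\h(X))$ satisfies, by Proposition \ref{P:selfdualidempotent}, that $(X,p) \cong \bigoplus_{i=0}^{d_X-d_B} \h(B)(i)$ and that $p = {}^tp$. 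Setting $q := \Delta_X - p$, which is again an idempotent, I obtain at once a direct sum decomposition
$$\h(X) \cong \bigoplus_{i=0}^{d_X-d_B} \h(B)(i) \oplus (X,q),$$
so the entire content of the theorem reduces to showing that the residual motive $(X,q)$ is isomorphic to $(Z,r,n)$ for a smooth projective $Z$ of dimension $d_X - 2n$ and an idempotent $r$.

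The key step is to verify that $(X,q)$ meets the hypotheses of Proposition \ref{effective-coro} with $m = n$, that is, that $\CH_l(X_\Omega, q_\Omega) = 0$ and $\CH_l(X_\Omega, {}^tq_\Omega) = 0$ for all $l < n$. Because $q = {}^tq$ (as $p = {}^tp$ and the diagonal is symmetric), these two vanishing statements coincide, so I only need to check one of them. To do this I would pass to Chow groups: the action of $q$ on $\CH_l(X_\Omega)$ is $\id - p_*$, and $p_* = \Phi_* \circ \Theta_*$ is the projector onto the image of $\Phi_*$. Thus $q_* \CH_l(X_\Omega) = 0$ is equivalent to the surjectivity of $\Phi_*$ on $\CH_l$. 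For $l < n$, the hypothesis $\CH_l(X_b) = \Q$ for all $b \in B(\Omega)$ — together with $\CH_{l-i}(X_b) = \Q$ for the smaller indices $l - i \leq l < n$ — is exactly what Theorem \ref{main} needs to conclude that
$$\Phi_* : \bigoplus_{i=0}^{d_X-d_B} \CH_{l-i}(B) \longrightarrow \CH_l(X)$$
is an isomorphism, hence surjective, over the base field $\Omega$. Applying this over $\Omega$ (after base change, noting that flatness and the smoothness of $B$ are preserved) gives $q_* \CH_l(X_\Omega) = 0$ for every $l < n$, as required.

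With the vanishing in hand, Proposition \ref{effective-coro} applies to $(Y,q) = (X,q)$ with $m = n$, producing a smooth projective variety $Z$ of dimension $d_X - n - n = d_X - 2n$ and an idempotent $r \in \End(\h(Z))$ with $(X,q) \cong (Z,r,n)$; substituting this into the displayed decomposition above finishes the proof. The main obstacle I anticipate is the bookkeeping in the vanishing step: one must be careful that the hypothesis $\CH_l(X_b) = \Q$ for $l < n$ furnishes precisely the triviality $\CH_{l-i}(X_b) = \Q$ for all the indices $0 \leq i \leq \min(l, d_B)$ demanded by Theorem \ref{main}, and that the passage to the universal domain $\Omega$ does not disturb either the flatness of $f$ or the identification of $q_*$ with $\id - p_*$. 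Once it is clear that the fibrewise triviality in the stated range is inherited by all the relevant generic fibres via Lemma \ref{L:universal}, the argument goes through without further difficulty.
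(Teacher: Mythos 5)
Your proposal is correct and follows essentially the same route as the paper's own proof: both extract the self-dual idempotent $p = \Phi \circ \Xi \circ \Psi$ from Theorem \ref{T:mainrevisited} and Proposition \ref{P:selfdualidempotent}, use Theorem \ref{main} over $\Omega$ to show that $\id - p$ annihilates $\CH_l(X_\Omega)$ for $l < n$ (with the transpose vanishing free from $p = {}^tp$), and then apply Proposition \ref{effective-coro} with $m = n$ to identify the complement with $(Z,r,n)$, $\dim Z = d_X - 2n$. Your explicit bookkeeping that $\CH_{l-i}(X_b) = \Q$ for all $0 \leq i \leq \min(l,d_B)$ follows from the hypothesis since $l - i \leq l < n$ is exactly the verification the paper leaves implicit.
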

\begin{proof} With the notations of Theorem \ref{T:mainrevisited} and
  its proof the endomorphism $\Psi \circ \Phi \in \mathrm{End}\,
  \big(\bigoplus_{i=0}^{d_X-d_B} \h(B)(i) \big)$ admits an inverse
  denoted $\Xi$.  Proposition \ref{P:selfdualidempotent} then states
  that $p := \Phi \circ \Xi \circ \Psi \in
 \End(\h(X))$ is a self-dual idempotent such that $(X,p) \cong
 \bigoplus_{i=0}^{d_X-d_B} \h(B)(i).$ By Theorem \ref{main},
 $(p_\Omega)_* : \CH_l(X_\Omega) \rightarrow \CH_l(X_\Omega)$ is an
 isomorphism for all $l<n$. It follows that
 $\CH_{l}(X_\Omega,p_\Omega) = \CH_l(X_\Omega)$ for all $l<n$ and thus
 that $\CH_{l}(X_\Omega,\id_\Omega - p_\Omega) = 0$ for all
 $l<n$. Because $p={}^tp$, we also have $\CH_{l}(X_\Omega,\id_\Omega -
 {}^tp_\Omega) = 0$ for all $l<n$.  Proposition \ref{effective-coro}
 then yields the existence of a smooth projective variety $Z$ of
 dimension $d_X - 2n$ such that $(X,\id-p)$ is isomorphic to a direct
 summand of $\h(Z)(n)$.
\end{proof}

Our original motivation was to establish Murre's conjectures
\cite{Murre1} for smooth projective varieties fibred by quadrics over
a surface. The importance of Murre's conjectures was demonstrated by
Jannsen who proved \cite{Jannsen} that these hold true for all smooth
projective varieties if and only if Bloch and Beilinson's conjecture
holds true.  In our covariant setting, Murre's conjectures can be
stated as follows. \medskip

(A) $X$ has a Chow--K\"unneth decomposition $\{\pi_0, \ldots,
\pi_{2d}\}$ : There exist mutually orthogonal idempotents $\pi_0,
\ldots, \pi_{2d} \in \CH_{d_X}(X \times X)$ adding to the identity
such that $(\pi_i)_*\mathrm{H}_*(X)=\mathrm{H}_i(X)$ for all $i$.

(B) $\pi_0, \ldots, \pi_{2l-1},\pi_{d+l+1}, \ldots, \pi_{2d}$ act
trivially on $\CH_l(X)$ for all $l$.

(C) $F^i\CH_l(X) := \ker(\pi_{2l}) \cap \ldots \cap
\ker(\pi_{2l+i-1})$ doesn't depend on the choice of the $\pi_j$'s.
Here the $\pi_j$'s are acting on $\CH_l(X)$.

(D) $F^1\CH_l(X) = \CH_l(X)_\hom := \ker \big(\CH_l(X) \r \mathrm{H}_{2l}(X)
\big).$ 

\begin{definition}
  \label{D:Murre} A variety $X$ that satisfies conjectures (A), (B)
  and (D) is said to have a \emph{Murre decomposition}.
\end{definition}

In the particular case when $f$ is a flat morphism whose geometric
fibres are quadrics\footnote{Actually if $f$ is flat and if its closed
  geometric fibres are quadrics, then all of its geometric fibres are
  quadrics. Conversely if the geometric fibres of $f$ are quadrics of
  dimension $d_X-d_B$, then $f$ is flat.}, Theorem
\ref{T:gen-intro-text} implies the following corollary. We write $\lfloor a
\rfloor$ for the greatest integer which is smaller than or equal to
the rational number $a$.

\begin{corollary} \label{C:mainquadric} Let $f : X \r B$ be a flat
  morphism of smooth projective varieties over $k$. Assume that
  $\CH_l(X_b)=\Q$ for all $0 \leq l < \frac{d_X-d_B}{2}$ and for all
  points $b \in B(\Omega)$. For instance, the geometric fibres of $f$
  could either be quadrics or complete intersection of dimension $4$
  and bidegree $(2,2)$. Then
  \begin{itemize}
  \item If $d_B=1$, then $X$ is Kimura finite-dimensional
    \cite{Kimura}.
  \item If $d_B \leq 2$, then $X$ has a Murre decomposition.
 \item If $d_B=3$, $d_X-d_B$ is odd and $B$ has a Murre decomposition,
   then $X$ has a Murre decomposition.
  \end{itemize}
\end{corollary}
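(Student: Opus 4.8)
The plan is to substitute the decomposition of Theorem~\ref{T:gen-intro-text} into the list of motives for which the relevant conjectures are already known. Set $n := \lceil \tfrac{d_X-d_B}{2}\rceil$; the hypothesis that $\CH_l(X_b)=\Q$ for all $0\le l<\tfrac{d_X-d_B}{2}$ is precisely what guarantees $\CH_l(X_b)=\Q$ for all $0\le l<n$, so Theorem~\ref{T:gen-intro-text} applies and yields
$$\h(X)\cong \bigoplus_{i=0}^{d_X-d_B}\h(B)(i)\oplus (Z,e,n),$$
with $Z$ smooth projective of dimension $d_X-2n$ and $e\in\End(\h(Z))$ an idempotent. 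Inspecting the parity of $d_X-d_B$ shows $\dim Z=d_B$ when $d_X-d_B$ is even and $\dim Z=d_B-1$ when $d_X-d_B$ is odd. This computation is what governs the three regimes: I would arrange that every summand above is either a Tate twist of $\h(B)$ or a direct summand of the motive of a variety $Z$ with $\dim Z\le 2$.

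For the first bullet ($d_B=1$) I would argue entirely through Kimura finite-dimensionality \cite{Kimura}, which is stable under direct sums, direct summands and Tate twists. A smooth projective curve is finite-dimensional, hence so is each $\h(B)(i)$; and here $\dim Z\le 1$, so $Z$ is a curve or a finite set of points and $(Z,e,n)$ is a summand of the finite-dimensional motive $\h(Z)(n)$. Assembling the summands then shows that $\h(X)$ is finite-dimensional.

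For the Murre statements I would first record the two formal properties I need: a Murre decomposition (Definition~\ref{D:Murre}) is preserved by Tate twists, and it is closed under direct sums, since summing the Chow--K\"unneth projectors of the summands in each cohomological degree produces mutually orthogonal idempotents adding to the identity, while conditions (B) and (D) survive because Chow groups, homology and the projectors all split compatibly along the decomposition. It therefore suffices to equip each summand with a Murre decomposition. The pieces $\h(B)(i)$ are handled by the fact that $B$ itself has one --- classically for $d_B\le 2$ (curves, and Murre's theorem for surfaces \cite{Murre1}), and by hypothesis when $d_B=3$ --- together with twist-stability. For the remaining piece $(Z,e,n)$ one has $\dim Z\le 2$ in every Murre regime; this is exactly why $d_X-d_B$ is required to be odd when $d_B=3$, forcing $\dim Z=2$ rather than $3$. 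Thus $(Z,e,n)$ is, up to a twist, a direct summand of the motive of a curve or surface, and I would appeal to the known cases collected in Theorem~\ref{T:niveau} to produce a Murre decomposition on it.

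The heart of the matter, and the one genuinely nontrivial step, is this last appeal. The idempotent $e$ is not \emph{a priori} compatible with any Chow--K\"unneth decomposition of $\h(Z)$, so one cannot simply restrict projectors. Here I would exploit that, by the construction in the proof of Theorem~\ref{T:gen-intro-text}, the motive $(Z,e,n)\cong(X,\id-p)$ and its transpose carry the whole ``non-$B$'' part of the Chow groups of $X$ while their low-dimensional Chow groups are accounted for entirely by the $\h(B)$-summands; combined with $\dim Z\le 2$ this means its Chow groups have small niveau, placing it squarely within the scope of Theorem~\ref{T:niveau}. Once each summand carries a Murre decomposition, the additivity recorded above closes the argument; the only remaining bookkeeping is to check that the degree shifts coming from the Tate twists line up so that the assembled projectors genuinely satisfy (B), which is routine.
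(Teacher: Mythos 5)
Your reduction is the same as the paper's: apply Theorem~\ref{T:gen-intro-text} with $n=\lceil\frac{d_X-d_B}{2}\rceil$, compute $\dim Z=d_B$ or $d_B-1$ according to the parity of $d_X-d_B$, and handle the summands $\h(B)(i)$ and $(Z,e,n)$ separately using stability of the relevant properties under direct sums, direct summands and Tate twists. The first bullet is correct as you argue it: Kimura finite-dimensionality has exactly these stability properties, so $\h(X)$ is finite-dimensional once $B$ is a curve and $\dim Z\le 1$.

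The gap is in your final appeal for the Murre bullets. You correctly identify the crux --- the idempotent $e$ need not be compatible with any Chow--K\"unneth decomposition of $Z$, so one cannot restrict projectors --- but your proposed fix does not repair it. Theorem~\ref{T:niveau} is a statement about smooth projective \emph{varieties} whose Chow groups have small niveau; its conclusion equips the variety with a Murre decomposition, and nothing in it produces mutually orthogonal idempotents refining an arbitrary idempotent $e\in\End(\h(Z))$. In particular, observing that the Chow groups of $(Z,e,n)\cong(X,\id-p)$ vanish in low dimensions, or that $\CH_*(X)$ has niveau $\le d_B$, does not place the \emph{motive} $(Z,e,n)$ within the scope of Theorem~\ref{T:niveau}; and applying that theorem to $X$ itself also fails for $d_B=2,3$, since its Murre item requires niveau $\le 1$ while the fibration hypothesis only yields niveau $\le d_B$. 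The missing ingredient is genuinely motivic: one needs that \emph{any direct summand of the motive of a surface has a Murre decomposition}, which is \cite[Theorem 3.5]{VialCK} and is precisely what the paper invokes at this point (together with the elementary stability under twists and sums that you record). With that citation in place of the appeal to Theorem~\ref{T:niveau}, your argument closes in all three cases, including $d_B=3$ where the parity hypothesis forces $\dim Z=2$ exactly as you note.
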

\begin{proof} By Theorem \ref{T:gen-intro-text}, there is a variety $Z$ and an
  idempotent $r \in \End(\h(Z))$ such that the motive of $X$ admits a
  direct sum decomposition $$\h(X) \cong \bigoplus_{i=0}^{d_X-d_B}
  \h(B)(i) \oplus (Z,r,\lfloor \frac{d_X-d_B+1}{2}\rfloor),$$ where
  \[d_Z = \left\{ \begin{array}{ll} d_B-1 & \mbox{if $d_X-d_B$ is odd}; \\
      d_B & \mbox{if $d_X-d_B$ is even}.\end{array} \right.\] Thus, we
  only need to note that any direct summand of the motive of a curve
  is finite-dimensional \cite{Kimura} and that any direct summand of
  the motive of a surface has a Murre decomposition \cite[Theorem
  3.5]{VialCK}. Finally, let us mention that, when $d_B=1$, it is not
  necessary to assume $f$ to be flat to conclude that $X$ is Kimura
  finite-dimensional; see Propositions \ref{quadrics1} and \ref{22}
  below.
\end{proof}

\begin{remark}
  Examples of $3$-folds having a Murre decomposition include products
  of a curve with a surface \cite{Murre2}, $3$-folds rationally
  dominated by a product of curves \cite{Vial4} and uniruled $3$-folds
  \cite{dAMS}.
\end{remark}

\begin{remark}[The case of smooth families]
  Suppose $f : X \r B$ is a smooth morphism between smooth projective
  varieties with geometric fibres being quadric hypersurfaces. 
    Iyer \cite{Iyer2} showed that $f$ is \'etale locally trivial and
  deduced that $f$ has a relative Chow--K\"unneth decomposition. By
  using the technique of Gordon--Hanamura--Murre \cite{GHM2},
  it is then possible to prove that 
  \[ \h(X) \cong \left\{ \begin{array}{ll} \bigoplus_{l=0}^{d_X-d_B}
      \h(B)(l) & \mbox{if $d_X-d_B$ is odd}; \\
      \bigoplus_{l=0}^{d_X-d_B} \h(B)(l) \oplus
      \h(B)(\frac{d_X-d_B}{2}) & \mbox{if $d_X-d_B$ is
        even}.\end{array} \right.\]
\end{remark}

\begin{remark} Suppose $f : X \r S$ is a complex morphism from a
  smooth projective $3$-fold $X$ to a smooth projective surface $S$
  whose fibres are conics. In that case, Nagel and Saito
  \cite{Nagel-Saito} identify (up to direct summands isomorphic to
  $\mathds{1}$ or $\mathds{1}(1)$) the motive $(Z,r)$ in the proof of
  Corollary \ref{C:mainquadric} with the $\h_1$ of the Prym variety
  $P$ attached to a double-covering of the discriminant curve $C$ of
  $f$. If now $f : X \r S$ is a flat complex morphism from a smooth
  projective variety $X$ to a smooth projective surface $S$ whose
  fibres are odd-dimensional quadrics, then, because the motive of a
  curve is Kimura finite-dimensional and by the Lefschetz
  $(1,1)$-theorem, one would deduce an identification of the $\h_1$ of
  $(Z,r)$ with $\h_1(P)$ from an isomorphism of Hodge structures
  $\mathrm{H}^1(Z,r) \cong \mathrm{H}^1(P)$. Here, $P$ again is the
  Prym variety attached to a double-covering of the discriminant curve
  $C$ of $f$.  Such an identification is currently being investigated
  by J. Bouali \cite{Bouali} by generalising the methods of
  \cite{Nagel-Saito}.
\end{remark}

\begin{corollary} \label{C:ratnum} Let $f : X \r B$ be a flat dominant
  morphism between smooth projective varieties defined over a finite
  field $\mathbf{F}$ whose geometric fibres are quadrics. If $d_B \leq
  2$, then numerical and rational equivalence agree on $X$.
\end{corollary}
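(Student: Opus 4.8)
The plan is to deduce Corollary \ref{C:ratnum} from the motivic decomposition supplied by Theorem \ref{T:gen-intro-text}. Since the geometric fibres of $f$ are quadrics, we have $\CH_l(X_b) = \Q$ for all $0 \leq l < \frac{d_X - d_B}{2}$, so Theorem \ref{T:gen-intro-text} applies with $n = \lfloor \frac{d_X-d_B+1}{2}\rfloor$ and yields a smooth projective variety $Z$ and an idempotent $r$ with
$$\h(X) \cong \bigoplus_{i=0}^{d_X-d_B} \h(B)(i) \oplus (Z,r,n),$$
where, exactly as computed in the proof of Corollary \ref{C:mainquadric}, the dimension of $Z$ is $d_B - 1$ if $d_X - d_B$ is odd and $d_B$ if $d_X - d_B$ is even. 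Because rational and numerical equivalence are compatible with direct sum decompositions of motives and with Tate twists, it suffices to show that rational and numerical equivalence agree on each summand $\h(B)(i)$ and on the summand $(Z,r,n)$.

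First I would treat the summands $\h(B)(i)$. Since $d_B \leq 2$, the variety $B$ is a smooth projective curve or surface over the finite field $\mathbf{F}$. For curves, rational and numerical equivalence agree (the nontrivial content being that $\CH_0(B)_{\mathrm{hom}} = \CH_0(B)_{\num}$, which over a finite field follows from the fact that the Jacobian has finite Mordell--Weil group, or more directly from Kato--Saito type results). For surfaces over a finite field, the agreement of rational and numerical equivalence is exactly the statement that is known in this setting; here I would cite the relevant result (rational equivalence equals numerical equivalence for zero-cycles on surfaces over finite fields is essentially the Colliot-Th\'el\`ene--Sansuc--Soul\'e / Kato--Saito finiteness theorem). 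Tate twisting does not affect the comparison, so each $\h(B)(i)$ contributes no nontrivial kernel.

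The main obstacle is the summand $(Z,r,n)$, and this is where the dimension bound $d_Z \leq d_B \leq 2$ is essential. The plan is to observe that $(Z,r,n)$ is a direct summand of $\h(Z)(n)$ with $Z$ a smooth projective variety of dimension at most two over $\mathbf{F}$, so that rational and numerical equivalence agree on $Z$ itself by the same curve/surface results used above, and hence agree on any direct summand of its motive (again up to the harmless Tate twist by $n$). The hard part conceptually is making sure the comparison descends to motivic summands: one must use that if rational and numerical equivalence agree on the Chow groups of a smooth projective variety $Y$, then they agree on $\CH_*(Y,q)$ for every idempotent $q \in \End(\h(Y))$, since the projector $q$ is compatible with both equivalence relations and so induces a splitting of the comparison map. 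Combining the contributions of all summands, the kernel of $\CH_l(X) \r \CH_l(X)_{\num}$ vanishes for every $l$, which is precisely the assertion that rational and numerical equivalence agree on $X$.
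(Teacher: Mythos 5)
Your proposal is correct and takes essentially the same route as the paper's own proof: apply the decomposition of Theorem \ref{T:gen-intro-text} with $n=\lfloor \frac{d_X-d_B+1}{2}\rfloor$ so that $Z$ is a curve or a surface, use that correspondences preserve numerical equivalence to transport $\CH_l(X)_{\mathrm{num}}$ through the isomorphism and its inverse, and conclude from the vanishing of $\CH_0(Y)_{\mathrm{num}}$, $\CH^1(Y)_{\mathrm{num}}$ and $\CH^0(Y)_{\mathrm{num}}$ for any smooth projective $Y$ over a finite field. The only difference is cosmetic: you make explicit the finite-field inputs (Kato--Saito type finiteness for zero-cycles, finiteness of $\Pic^0(\mathbf{F})$) that the paper invokes without citation.
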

\begin{proof} As in the proof of Corollary \ref{C:mainquadric}, there
  is a direct sum decomposition \begin{equation} \label{E:finite}
    \h(X) \cong \bigoplus_{i=0}^{d_X-2} \h(B)(i) \oplus (Z,r, \lfloor
    \frac{d_X-d_B+1}{2} \rfloor)
  \end{equation} 
  for some smooth projective variety $Z$, which is a curve if
  $d_X-d_B$ is odd and a surface if $d_X-d_B$ is even. Now the action
  of correspondences preserves numerical equivalence so that if
  $\alpha$ denotes the isomorphism from $\h(X)$ to the right-hand side
  of \eqref{E:finite} and if $\beta$ denotes its inverse, then we have
  $\CH_l(X)_{\mathrm{num}} = \beta_*\alpha_*\CH_l(X)_{\mathrm{num}}$
  for all $l$. In particular, $\CH_l(X)_{\mathrm{num}} =
  \beta_*\big(\bigoplus_{i=0}^{d_X-2} \CH_{l-i}(B)_{\mathrm{num}}
  \oplus r_*\CH_{l-m}(Z)_{\mathrm{num}}\big)$, where $m= \lfloor
  \frac{d_X-d_B+1}{2} \rfloor$. The corollary then follows from the
  fact that for any smooth projective variety $Y$ defined over a
  finite field the groups $\CH_0(Y)_{\mathrm{num}}$,
  $\CH^1(Y)_{\mathrm{num}}$ and $\CH^0(Y)_{\mathrm{num}}$ are zero.
\end{proof}

\section{On the motive of a smooth blow-up}

Let $X$ be a smooth projective variety over a field $k$ and let $j : Y
\hookrightarrow X$ be a smooth closed subvariety of codimension $r$.
We write $\tau : \X_Y \r X$ for the blow-up of $X$ along $Y$. Manin
\cite{Manin} showed by an existence principle that the natural map,
which is denoted $\Phi$ below, $\h(X) \oplus \bigoplus_{i=1}^{r-1}
\h(Y)(i) \longrightarrow \h(\X_Y)$ is an isomorphism of Chow
motives. Here, we make explicit the inverse to this isomorphism. An
application is given by Proposition \ref{lem birat invariant}. The
results of this section will not be used in the rest of the
paper.\medskip

We have the following fibre square $$\xymatrix{D \ar[r]^{\j}
  \ar[d]_{\tau_D} & \X_Y \ar[d]^{\tau} \\ Y \ar[r]^j & X}$$ where $\j
: D \r \X_Y$ is the exceptional divisor and where $\tau_D : D \r Y$ is
a $\P^{r-1}$-bundle over $Y$. Precisely $D = \P(\mathcal{N}_{Y/X})$ is
the projective bundle over $Y$ associated to the normal bundle
$\mathcal{N}_{Y/X}$ of $Y$ inside $X$.  The tautological line bundle
on $D = \P(\mathcal{N}_{Y/X})$ is
$\mathcal{O}_{\P(\mathcal{N}_{Y/X})}(-1) = \mathcal{O}_{\X_Y}(D)|_D$.
Let
$$H^{r-1} \subset \ldots \subset H^i \subset \ldots \subset H
\subset D$$ be linear sections of $D$ corresponding to the relatively
ample line bundle $\mathcal{O}_{\P(\mathcal{N}_{Y/X})}(1)$, where
$H^i$ has codimension $i$. Thus, if $D \hookrightarrow \P^M \times Y$
is an embedding over $Y$ corresponding to
$\mathcal{O}_{\P(\mathcal{N}_{Y/X})}(1)$, then $ H^i$ denotes the
smooth intersection of $D$ with $L^i \times Y$ for some linear
subspace $L^i$ of codimension $i$ inside $\P^M$. Let us write $\iota^i
: H^i \hookrightarrow D$ for the inclusion maps.

As we will be using repeatedly Manin's identity principle, let us
mention that if $Z$ is a smooth projective variety over $k$, then the
blow-up of $X \times Z$ along $Y \times Z$ canonically identifies with
$\tau \times \id_Z : \X_Y \times Z \r X \times Z$.  We write $H^i_Z$
for $H^i \times Z$.\medskip

Let us define the morphism of motives $$\Phi := {}^t\Gamma_\tau \oplus
\bigoplus_{i=1}^{r-1} \Gamma_{\j} \circ h^{r-1-i} \circ
{}^t\Gamma_{\tau_D} \ : \ \h(X) \oplus \bigoplus_{i=1}^{r-1} \h(Y)(i)
\longrightarrow \h(\X_Y).$$ Here, $h^l$ is the correspondence
$\Gamma_{\iota^l} \circ {}^t \Gamma_{\iota^l}$; it coincides with
the $l$-fold composite of $h := \Gamma_{\iota^1} \circ {}^t
\Gamma_{\iota^1}$ with itself.

On the one hand, we have the following blow-up formula for Chow
groups; see \cite{Manin}.

\begin{proposition} \label{P:projbundleChow} The induced map $$\Phi_* =
  \tau^* \oplus \bigoplus_{i=1}^{r-1} {\j}_* h^{r-1-i} {\tau_D}^* \ :
  \ \CH_l(X) \oplus \bigoplus_{i=1}^{r-1} \CH_{l-i}(Y) \longrightarrow
  \CH_l(\X_Y)$$
  is an isomorphism. \qed
\end{proposition}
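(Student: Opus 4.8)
The plan is to deduce the isomorphism from the projective bundle formula for the exceptional divisor together with the localisation sequence, following Fulton's treatment of blow-ups. First I would record the two ingredients. Since $\tau_D : D = \P(\mathcal{N}_{Y/X}) \r Y$ is a $\P^{r-1}$-bundle and $h$ is intersection with the relatively ample class $\mathcal{O}_{\P(\mathcal{N}_{Y/X})}(1)$, the projective bundle formula \cite[Theorem 3.3]{Fulton} gives that
$$\bigoplus_{i=0}^{r-1} h^{r-1-i}\circ\tau_D^* \ : \ \bigoplus_{i=0}^{r-1}\CH_{l-i}(Y) \longrightarrow \CH_l(D)$$
is an isomorphism; in particular every class on $D$ has a unique such expression. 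Second, $\tau$ restricts to an isomorphism $\X_Y \setminus D \xrightarrow{\sim} X \setminus Y =: U$, so the localisation sequences for $D \subset \X_Y$ and $Y \subset X$ fit into a commutative ladder whose right-hand terms $\CH_l(U)$ agree.

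For surjectivity, given $\gamma \in \CH_l(\X_Y)$, the class $\gamma - \tau^*\tau_*\gamma$ restricts to $0$ on $U$ (because $\tau|_U$ is an isomorphism), hence by exactness of localisation $\gamma = \tau^*\tau_*\gamma + \j_*\delta$ for some $\delta \in \CH_l(D)$. Writing $\delta = \sum_{i=0}^{r-1} h^{r-1-i}\tau_D^*\beta_i$ via the projective bundle formula, all terms with $i \geq 1$ already lie in the image of $\Phi_*$, and $\tau^*\tau_*\gamma \in \tau^*\CH_l(X)$ does too. The term $i=0$ is the only genuine point: I would eliminate it using the blow-up key formula $\tau^* j_* = \j_*\big(c_{r-1}(\mathcal{Q})\cap\tau_D^*(\,\cdot\,)\big)$ of \cite[Prop. 6.7]{Fulton}, where $\mathcal{Q}$ is the universal quotient on $\P(\mathcal{N}_{Y/X})$ and, by the Whitney formula, $c_{r-1}(\mathcal{Q}) = h^{r-1} + \sum_{i\geq 1} \tau_D^* c_i(\mathcal{N}_{Y/X})\, h^{r-1-i}$. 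This expresses $\j_* h^{r-1}\tau_D^*\beta_0$ as $\tau^* j_*\beta_0$ minus a sum of terms $\j_* h^{r-1-i}\tau_D^*(c_i(\mathcal{N}_{Y/X})\cap\beta_0)$ with $i \geq 1$, all of which lie in $\im \Phi_*$. Hence $\gamma \in \im\Phi_*$.

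For injectivity, suppose $\tau^*\alpha + \sum_{i=1}^{r-1}\j_* h^{r-1-i}\tau_D^*\beta_i = 0$. Applying $\tau_*$ and using $\tau_*\tau^* = \id$ (as $\tau$ is proper and birational) together with $\tau_*\j_* = j_*\tau_{D*}$ and the Segre vanishing $\tau_{D*}(h^{r-1-i}\tau_D^*\beta_i) = s_{-i}(\mathcal{N}_{Y/X})\cap\beta_i = 0$ for $i\geq 1$ \cite[Prop. 3.1]{Fulton}, I obtain $\alpha = 0$. The relation then reads $\sum_{i=1}^{r-1}\j_* h^{r-1-i}\tau_D^*\beta_i = 0$; applying the divisor Gysin map $\j^*$ and the self-intersection formula $\j^*\j_* = c_1(\mathcal{O}_{\P(\mathcal{N}_{Y/X})}(-1))\cap(\,\cdot\,) = -h$ \cite[Cor. 6.3]{Fulton} yields $\sum_{i=1}^{r-1} h^{r-i}\tau_D^*\beta_i = 0$ in $\CH_{l-1}(D)$. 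Since $h^{r-i}\tau_D^*\beta_i = h^{r-1-(i-1)}\tau_D^*\beta_i$ occupies the distinct slots $i-1 = 0,\ldots,r-2$ of the projective bundle decomposition, uniqueness forces $\beta_i = 0$ for all $i$.

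The step I expect to be the only real obstacle is the surjectivity reconciliation of the $i=0$ term, namely the verification that the class $\j_* h^{r-1}\tau_D^*\beta_0$ produced by the projective bundle formula is redundant; this is precisely where the excess-intersection content of \cite[Prop. 6.7]{Fulton} enters and where one must keep track of the lower Chern classes of $\mathcal{N}_{Y/X}$. Everything else reduces to bookkeeping with the localisation ladder, the Segre relations for $\tau_{D*}$, and the self-intersection formula for the divisor $D$.
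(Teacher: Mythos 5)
Your proof is correct, but it takes a genuinely different route from the paper, which states Proposition \ref{P:projbundleChow} without proof, citing Manin \cite{Manin}; Manin's argument proceeds via the identity principle, and indeed the stated purpose of this section of the paper is to take the isomorphism as a black box and make its inverse explicit (Lemma \ref{P:triang}, Theorem \ref{T:blowup}). What you give instead is the direct cycle-level proof, essentially the split exact sequence for blow-ups from \cite[\S 6.7]{Fulton}: surjectivity via the localisation sequence for $D \subset \X_Y$ combined with the key formula $\tau^* j_* = \j_*\big(c_{r-1}(\mathcal{Q}) \cap \tau_D^*(\cdot)\big)$ and the Whitney expansion $c_{r-1}(\mathcal{Q}) = \sum_{i=0}^{r-1} \tau_D^* c_i(\mathcal{N}_{Y/X})\, h^{r-1-i}$, which correctly disposes of the problematic $i=0$ slot; injectivity by applying $\tau_*$ (using $\tau_*\tau^* = \id$ and the Segre vanishing of \cite[Prop.~3.1]{Fulton}) and then $\j^*$ (using $\j^*\j_* = -h$), followed by uniqueness in the projective bundle theorem, where your reindexing of the slots $i-1 = 0,\dots,r-2$ in $\CH_{l-1}(D)$ is accurate. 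It is worth noting that your injectivity computation is precisely the Chow-group shadow of the paper's Lemma \ref{P:triang}: the identities $\tau_*\tau^* = \id$, $\j^*\j_* = -h$ and the triangularity vanishing are the relations (1)--(3) established there (the paper derives $\j^*\j_* = -h$ from \cite[Prop.~2.6]{Fulton}, the same fact as your self-intersection formula), while your surjectivity argument is the part the paper never carries out, since it obtains surjectivity wholesale from the citation. The trade-off: your route is self-contained and exhibits an explicit cycle-level preimage of any class, at the cost of invoking the excess-intersection input of \cite[Prop.~6.7]{Fulton}; the paper's route defers the Chow statement to the literature but performs the analogous triangularity computation once at the level of correspondences, which is what upgrades the result from Chow groups to motives. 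The only steps you elide --- that the refined pullback $\tau^*$ commutes with restriction to the open complement $U$ and that $(\tau_*\gamma)|_{X \setminus Y} = (\tau|_U)_*(\gamma|_U)$ --- are standard compatibilities (\cite[Prop.~1.7, Prop.~8.1.2]{Fulton}) and do not constitute a gap.
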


On the other hand, we define
$$\Psi := \Gamma_\tau \oplus \bigoplus_{i=1}^{r-1} (-1) \cdot 
\Gamma_{\tau_D} \circ h^{i-1} \circ {}^t\Gamma_{\j} \ : \ \h(\X_Y)
\longrightarrow \h(X) \oplus \bigoplus_{i=1}^{r-1} \h(Y)(i).$$ Let
$(\Psi \circ \Phi)_{i,j}$ be the $(i,j)^\mathrm{th}$ component of
$\Psi \circ \Phi$, where $\h(X)$ is by definition the $0^\mathrm{th}$
coordinate of $\h(X) \oplus \bigoplus_{i=1}^{r-1} \h(Y)(i)$. Thus, if
$i, j \neq 0$, then $(\Psi \circ \Phi)_{i,j}$ is a morphism $\h(Y)(j)
\r \h(Y)(i)$; if $i \neq 0$, then $(\Psi \circ \Phi)_{i,0}$ is a
morphism $\h(X) \r \h(Y)(i)$; if $j \neq 0$, then $(\Psi \circ
\Phi)_{0,j}$ is a morphism $\h(Y)(j) \r \h(X)$; and $(\Psi \circ
\Phi)_{0,0}$ is a morphism $\h(X) \r \h(X)$.

The following lemma shows that $\Psi \circ \Phi$ is a lower
triangular matrix with invertible diagonal elements.

\begin{lemma} \label{P:triang}
  We have
$$
(\Psi \circ \Phi)_{i,j} = \left\{ \begin{array}{cl}
    0 &\mbox{ if $i <j$}  \\
    0 &\mbox{ if $ij=0$ unless $i=j=0$} \\
    \Delta_X &\mbox{ if $i=j=0$}  \\
    \Delta_Y &\mbox{ if $i=j>0$}
       \end{array} \right.
$$
\end{lemma}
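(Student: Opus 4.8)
The plan is to compute each entry $(\Psi\circ\Phi)_{i,j}$ of the composite by expanding the definitions of $\Psi$ and $\Phi$ as correspondences and then reducing everything to the known intersection theory on the exceptional divisor $D=\PP(\mathcal{N}_{Y/X})$, a projective bundle over $Y$. The essential input is the excess intersection formula governing the blow-up, which I would package into a handful of relations among the graphs $\Gamma_\tau$, $\Gamma_{\widetilde{j}}$, $\Gamma_{\tau_D}$ and the class $h$. The four cases to verify are: vanishing below the diagonal ($i<j$), vanishing of the mixed entries involving $\h(X)$ (the case $ij=0$ with $(i,j)\neq(0,0)$), and the two diagonal computations giving $\Delta_X$ and $\Delta_Y$.

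First I would handle the corner entry $(\Psi\circ\Phi)_{0,0}=\Gamma_\tau\circ{}^t\Gamma_\tau$. Since $\tau:\X_Y\r X$ is birational, $\tau_*\tau^*=\id$ on Chow groups, and one checks on correspondences that $\Gamma_\tau\circ{}^t\Gamma_\tau=\Delta_X$; this is the standard fact that a birational proper morphism satisfies $\tau_*\circ\tau^*=\id$, which here is realised at the level of the diagonal because $\tau$ is an isomorphism over the complement of $Y$ and the exceptional contribution, being supported over $Y$, has image of dimension $<d_X$ under the relevant projection, hence pushes forward to zero (mirroring the support argument in the proof of Lemma~\ref{orth}). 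Next, for the entries $(\Psi\circ\Phi)_{i,0}$ with $i>0$, namely $-\Gamma_{\tau_D}\circ h^{i-1}\circ{}^t\Gamma_{\widetilde{j}}\circ{}^t\Gamma_\tau$, I would use that $\tau\circ\widetilde{j}=j\circ\tau_D$ factors through $Y\hookrightarrow X$ together with the self-intersection formula for the exceptional divisor, reducing the computation to intersecting powers of $h$ against the tautological class on $D$; the vanishing here is the statement that $\tau^*$ followed by restriction to $D$ lands in the ideal generated by $\mathcal{O}_D(1)$ in a way that is killed after applying $\tau_{D*}\circ h^{i-1}$ for the relevant range. Symmetrically, the entries $(\Psi\circ\Phi)_{0,j}$ with $j>0$ equal $\Gamma_\tau\circ\Gamma_{\widetilde{j}}\circ h^{r-1-j}\circ{}^t\Gamma_{\tau_D}$, and vanish because $\tau\circ\widetilde{j}=j\circ\tau_D$ collapses $D$ onto $Y$, so the composite factors through $\h(Y)$ mapping into $\h(X)$ via a cycle supported on $Y$, whose contribution to $\CH_{d_X}(X\times Y)$ in the appropriate codegree is zero by dimension.

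For the off-diagonal entries with $i,j>0$, the composite is $-\Gamma_{\tau_D}\circ h^{i-1}\circ{}^t\Gamma_{\widetilde{j}}\circ\Gamma_{\widetilde{j}}\circ h^{r-1-j}\circ{}^t\Gamma_{\tau_D}$, so everything reduces to understanding the key correspondence $e:={}^t\Gamma_{\widetilde{j}}\circ\Gamma_{\widetilde{j}}\in\CH_*(D\times D)$, which is the self-intersection of the exceptional divisor expressed via the excess normal bundle. By the blow-up formula, $e$ acts as multiplication by the first Chern class $c_1(\mathcal{O}_D(1))$ (up to sign) of the tautological bundle, i.e. $e=-h$ as a self-correspondence of $D$, reflecting that $\mathcal{O}_{\X_Y}(D)|_D=\mathcal{O}_D(-1)$. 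Substituting this, the $(i,j)$-entry becomes a multiple of $\Gamma_{\tau_D}\circ h^{i-1}\circ h\circ h^{r-1-j}\circ{}^t\Gamma_{\tau_D}=\Gamma_{\tau_D}\circ h^{r-1+i-j}\circ{}^t\Gamma_{\tau_D}$, and then I would invoke the projective bundle relations on $D\r Y$: the correspondence $\Gamma_{\tau_D}\circ h^{k}\circ{}^t\Gamma_{\tau_D}$ on $Y$ equals $0$ for $k<r-1$ and equals $\Delta_Y$ for $k=r-1$ (these are exactly the fibrewise computations for a $\PP^{r-1}$-bundle, of the same flavour as Lemmas~\ref{dominant} and~\ref{orth} applied to $\tau_D$). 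For $i<j$ we get $r-1+i-j<r-1$ and hence $0$; for $i=j$ we get $r-1$ and hence $\Delta_Y$ (the sign from $e=-h$ cancelling the explicit $-1$ in the definition of $\Psi$), which is the desired diagonal value.

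The main obstacle I anticipate is pinning down the self-intersection correspondence $e={}^t\Gamma_{\widetilde{j}}\circ\Gamma_{\widetilde{j}}$ precisely, including its sign, since this is where the excess intersection formula for the exceptional divisor enters and where an off-by-one or sign error would propagate through every entry. Concretely, this requires the identity $\widetilde{j}^*\widetilde{j}_*=\cdot c_1(\mathcal{O}_D(-1))=-h$ on $\CH_*(D)$ (the self-intersection formula, $D\cdot D=-H$ restricted to $D$), realised at the level of correspondences; once this normalisation is fixed, the remaining steps are the routine projective-bundle vanishing and normalisation identities for $\tau_D$, together with the birationality of $\tau$. I would therefore isolate the computation of $e$ as a preliminary sublemma, verify it by restricting to the projective bundle $D$ and using $\mathcal{O}_{\X_Y}(D)|_D=\mathcal{O}_D(-1)$, and only then assemble the matrix entries case by case.
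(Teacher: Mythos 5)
Your plan reproduces the paper's proof essentially step for step: the same decomposition into four relations (birationality of $\tau$ via the projection formula for the $(0,0)$ entry; the factorisation $\tau\circ\j = j\circ\tau_D$ combined with the bundle analogue of Lemma~\ref{orth} for the mixed entries $ij=0$, including the $(i,0)$ entries by transposing; and the key identity ${}^t\Gamma_{\j}\circ\Gamma_{\j} = -h$ reducing the entries with $i,j>0$ to $\Gamma_{\tau_D}\circ h^{r-1+i-j}\circ{}^t\Gamma_{\tau_D}$, which vanishes for $i<j$ and equals $\Delta_Y$ for $i=j$), with the same sign bookkeeping cancelling the $(-1)$ in $\Psi$. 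Your appeal to Lemmas~\ref{dominant} and~\ref{orth} for the $\P^{r-1}$-bundle $\tau_D$ is legitimate at the level of correspondences, since both lemmas include cycle identities in the smooth projective case; the normalisation $\Delta_Y$ (rather than $n\Delta_Y$) holds because $\mathcal{O}_{\P(\mathcal{N}_{Y/X})}(1)$ embeds each fibre linearly, so the linear section $H^{r-1}$ meets each fibre in a single point and the degree in Lemma~\ref{dominant} is $1$.

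The one genuine gap is in your verification plan for the sublemma you correctly isolate as the crux, namely $e := {}^t\Gamma_{\j}\circ\Gamma_{\j} = -h \in \CH_{d_X-1}(D\times D)$. You propose to verify it ``by restricting to the projective bundle $D$'', i.e., via the self-intersection formula $\j^*\j_* = c_1\big(\mathcal{O}_{\X_Y}(D)|_D\big)\cap(-) = -h$ acting on $\CH_*(D)$ (\cite[Prop.~2.6]{Fulton}). But this only shows that the two correspondences have the same action on $\CH_*(D)$, which is strictly weaker than their equality as cycles on $D \times D$ --- and it is the cycle identity that you need, since the matrix entries are morphisms of motives. The paper closes exactly this gap with Manin's identity principle: the blow-up of $X\times Z$ along $Y\times Z$ canonically identifies with $\X_Y\times Z$ with exceptional divisor $D\times Z$, so the same self-intersection computation gives that $e\times\Delta_Z$ and $-h\times\Delta_Z$ act identically on $\CH_*(D\times Z)$ for every smooth projective $Z$, whence $e=-h$ as correspondences. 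You twice say the identity must be ``realised at the level of correspondences'' but never supply a mechanism for doing so; as written, your verification establishes only the action on $\CH_*(D)$ and the proof is incomplete at its self-declared key step. Everything else in your proposal matches the paper's argument.
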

\begin{proof}
The proposition consists of the following relations:
  \begin{enumerate}
\item $\Gamma_\tau \circ {}^t\Gamma_\tau = \Delta_X$.
\item $\Gamma_{\tau_D} \circ h^{i-1} \circ {}^t\Gamma_{\j} \circ
  \Gamma_{\j} \circ h^{r-1-j} \circ {}^t\Gamma_{\tau_D} = 0$ for all
  $1 \leq i < j \leq r-1$.
\item $\Gamma_{\tau_D} \circ h^{i-1} \circ {}^t\Gamma_{\j} \circ
  \Gamma_{\j} \circ h^{r-1-i} \circ {}^t\Gamma_{\tau_D} = - \Delta_Y$
  for all $1 \leq i \leq r-1$.
\item $\Gamma_\tau \circ \Gamma_{\j} \circ h^{r-1-i} \circ {}^t
  \Gamma_{\tau_D} = 0$ for all $1 \leq i \leq r-1$.
\end{enumerate}
Let us establish them. The morphism $\tau$ is a birational morphism so
that the identity (1) follows from the projection formula as in the
proof of Lemma \ref{dominant}.
The proof of (4) is a combination of the fact that $\tau \circ \j = j
\circ \tau_D$ and Lemma \ref{orth}.  As for (2) and (3), we claim that
$$^t\Gamma_{\j} \circ \Gamma_{\j} = - \Gamma_{\iota} \circ
{}^t\Gamma_{\iota} = -h \in \CH_{d_X-1}(D \times D).$$ Indeed, the
action of $h$ on $\CH_*(D)$ is given by intersecting with the class of
$H$.  Also, by \cite[Prop.  2.6]{Fulton}, the map $\j^*\j_* :
\CH_{*}(D) \r \CH_{*-1}(D)$ is given by intersecting with the class of
$D|_D$ which is precisely $-h$. The same arguments for the smooth
blow-up of $X \times Z$ along $Y \times Z$ (whose exceptional divisor
is $D \times Z \hookrightarrow \X_Y \times Z$), together with Manin's
identity principle, yield the claim.

In view of the above claim, (2) follows from Lemma \ref{orth} and (3)
follows from Lemma \ref{dominant}.
\end{proof}

Thus the endomorphism $$N := \left( \begin{array}{ccccc}
    \Delta_X & 0 & 0 & \cdots & 0 \\
    0 & \Delta_Y & 0 & \cdots & 0 \\
    0 & 0 & \ddots & \ddots & \vdots \\
    \vdots & \vdots & \ddots & \ddots & 0 \\ 0 & 0 & \cdots & 0 &
    \Delta_Y
  \end{array} \right) - \ \Psi \circ \Phi $$ is a nilpotent
endomorphism of $\h(X) \oplus \bigoplus_{i=1}^{r-1} \h(Y)(i)$ of index
$\leq r-1$, \emph{i.e.} $N^{r-1} = 0$. The morphism $$\Theta := (\id + N +
\ldots + N^{r-2}) \circ \Psi$$ thus gives a left-inverse to
$\Phi$. \medskip

The main result of this section is then the following theorem.

\begin{theorem} \label{T:blowup}
  The morphism $\Theta$ is the inverse of $\Phi$.
\end{theorem}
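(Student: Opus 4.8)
The plan is to upgrade $\Theta$ from a left-inverse of $\Phi$ to a genuine two-sided inverse. Recall that $\Theta$ is already known to be a left-inverse: since $\Psi \circ \Phi = \id - N$, one has $\Theta \circ \Phi = (\id + N + \cdots + N^{r-2}) \circ \Psi \circ \Phi = (\id + N + \cdots + N^{r-2})(\id - N) = \id - N^{r-1} = \id$, using $N^{r-1}=0$. First I would exploit this to observe that the endomorphism $p := \Phi \circ \Theta \in \End(\h(\X_Y))$ is idempotent, because $p \circ p = \Phi \circ (\Theta \circ \Phi) \circ \Theta = \Phi \circ \Theta = p$. The theorem therefore reduces to the single identity $p = \id_{\h(\X_Y)}$.

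To establish $p = \id_{\h(\X_Y)}$ I would invoke Manin's identity principle: a morphism of Chow motives vanishes as soon as, for every smooth projective variety $T$ over $k$, its action on the Chow groups of the product with $T$ is zero. Applying this to $\id - p$, it suffices to check that $p$ acts as the identity on $\CH_*(\X_Y \times T)$ for all smooth projective $T$. Here the compatibility recorded before the definition of $\Phi$ is essential: the blow-up $\X_Y \times T$ is canonically the blow-up of $X \times T$ along the smooth subvariety $Y \times T$ of the same codimension $r$, with exceptional divisor $D \times T$ and linear sections $H^i_T = H^i \times T$. Consequently the correspondences $\Phi$, $\Psi$, and hence $\Theta$, induce via their action on $\CH_*(- \times T)$ precisely the maps $\Phi_*$, $\Psi_*$, $\Theta_*$ attached by the very same formulas to this new blow-up.

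With this identification in hand, the verification is immediate. Applying Proposition \ref{P:projbundleChow} to the blow-up of $X \times T$ along $Y \times T$ shows that $\Phi_* : \CH_l(X \times T) \oplus \bigoplus_{i=1}^{r-1} \CH_{l-i}(Y \times T) \to \CH_l(\X_Y \times T)$ is an isomorphism. On the other hand, the identity $\Theta \circ \Phi = \id$ of correspondences specialises to $\Theta_* \circ \Phi_* = \id$ on the source. Since $\Phi_*$ is bijective, $\Theta_*$ is forced to be its two-sided inverse, so $\Phi_* \circ \Theta_* = \id$ on $\CH_*(\X_Y \times T)$; that is, $p$ acts as the identity there. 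As $T$ was arbitrary, Manin's identity principle yields $p = \id_{\h(\X_Y)}$, hence $\Phi \circ \Theta = \id$, which together with $\Theta \circ \Phi = \id$ proves that $\Theta$ is the inverse of $\Phi$.

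The step I expect to require the most care is the second one: making precise that the action of the correspondences $\Phi$ and $\Theta$ on $\CH_*(- \times T)$ coincides with the blow-up maps for $X \times T$ along $Y \times T$. This rests on the functoriality of $\tau$, $\tau_D$, $\j$ and of the linear sections $H^i$ under the product with $T$, together with the canonical identification of $\X_Y \times T$ with the blow-up of $X \times T$ along $Y \times T$; once this bookkeeping is carried out, the reduction to Proposition \ref{P:projbundleChow} is purely formal.
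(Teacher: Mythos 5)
Your proposal is correct and takes essentially the same approach as the paper: both reduce the statement to showing that the idempotent $p := \Phi \circ \Theta$ equals $\Delta_{\X_Y}$, use the canonical identification of $\X_Y \times Z$ with the blow-up of $X \times Z$ along $Y \times Z$ together with Proposition \ref{P:projbundleChow} to see that $p$ acts as the identity on $\CH_*(\X_Y \times Z)$ for every smooth projective $Z$, and conclude by Manin's identity principle. The paper phrases the last step as the vanishing of $\CH_*(\X_Y \times Z, \Delta_{\X_Y} \times \Delta_Z - p \times \Delta_Z)$ rather than applying the principle directly to $\id - p$, but this is the same argument.
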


\begin{proof}
  Let $p := \Phi \circ \Theta \in \End(\h(\X_Y))$. Because $\Theta$ is
  a left-inverse to $\Phi$, we see that $p$ is an idempotent. The
  motive of $\X_Y$ thus splits as $$\h(\X_Y) = (\X_Y,p) \oplus
  (\X_Y,\id-p).$$ As a consequence of Proposition
  \ref{P:projbundleChow} and Lemma \ref{P:triang}, we obtain that
$$\CH_* (\X_Y,p)= \CH_*(\X_Y).$$

Actually, since $\widetilde{(X \times Z)}_{Y \times Z}$ canonically
identifies with $\X_Y \times Z$, we get, thanks to \cite[Prop.
16.1.1]{Fulton}, that $\Phi \times \Delta_Z$ induces an isomorphism of
Chow groups as in Proposition \ref{P:projbundleChow} and that $\Theta
\times \Delta_Z$ is a left-inverse to $\Phi \times \Delta_Z$. Thus, we
also have that $$\CH_* (\X_Y \times Z,p \times \Delta_Z)= \CH_*(\X_Y
\times Z).$$ Therefore $$\CH_* (\X_Y \times Z,\Delta_{\X_Y} \times
\Delta_Z - p \times \Delta_Z)= 0.$$ By Manin's identity principle it
follows that
$$p = \Delta_{\X_Y}.$$ In other words, $\Theta$ is not only a
left-inverse to $\Phi$, it is also the inverse of $\Phi$.
\end{proof}

Let us now use Theorem \ref{T:blowup} to study the birational
invariance of some groups of algebraic cycles attached to smooth
projective varieties.  For a smooth projective variety $X$ over $k$,
we write $\mathrm{Griff}_l(X)$ for its Griffiths group
$\CH_l(X)_\hom/\CH_l(X)_{\alg}$. We also write, when $k \subseteq \C$,
$$T^l(X) := \ker\big( AJ^l : \CH^l(X)_{\hom} \rightarrow J^l(X)
\big)$$ for the kernel of Griffiths' Abel--Jacobi map to the
intermediate Jacobian $J^l(X)$ which is a quotient of
$\mathrm{H}^{2l-1}(X,\mathbb{C})$.

If $\pi : \widetilde{X} \rightarrow X$ is a birational map, the
projection formula implies that $\Gamma_\pi \circ {}^t\Gamma_\pi =
\Delta_X$; see Lemma \ref{dominant}. Thus $\pi_*\pi^*$ acts as the
identity on $\CH_l(X)$, $\mathrm{Griff}_l(X)$ and on $T^l(X)$. The
following proposition shows that in some cases $\pi_*$ and $\pi^*$ are
actually inverse to each other.

\begin{proposition}\label{lem birat invariant}
  Let $\pi : \widetilde{X} \rightarrow X$ be a birational map between
  smooth projective varieties.  Then $\pi^* \pi_*$ acts as the
  identity on $\CH_0(\widetilde{X})$,
  $\mathrm{Griff}_1(\widetilde{X})$,
  $\mathrm{Griff}^2(\widetilde{X})$, $T^2(\widetilde{X})$,
  $\CH^1(\widetilde{X})_{\hom}$ and $\CH^0(\widetilde{X})$.
\end{proposition}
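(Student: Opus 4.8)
The plan is to reduce the whole statement to the case of a single smooth blow-up, for which Proposition~\ref{P:projbundleChow} provides an explicit decomposition, and then to locate the exceptional summand inside each of the six groups. The first ingredient is a formal reduction. For every birational morphism $\phi$ of smooth projective varieties and each of the groups $G$ in the statement one already has $\phi_*\phi^* = \id$ on $G$; this is recorded in the paragraph preceding the proposition and follows from $\Gamma_\phi \circ {}^t\Gamma_\phi = \Delta$ (Lemma~\ref{dominant}). Granting this, it suffices to prove that $\pi_*$ is \emph{injective} on $G(\X)$: indeed $\pi_*(\pi^*\pi_* x - x) = (\pi_*\pi^*)(\pi_* x) - \pi_* x = 0$, so injectivity of $\pi_*$ forces $\pi^*\pi_* x = x$. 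To reduce injectivity of $\pi_*$ to blow-ups I would invoke Hironaka's elimination of indeterminacy (one may assume $\Char k = 0$, which is anyway forced by the presence of $T^2$): there is a composite of smooth blow-ups $h : W \r X$ such that $\pi^{-1}\circ h$ is a morphism, i.e.\ a birational morphism $g : W \r \X$ with $h = \pi\circ g$. For $x \in G(\X)$ with $\pi_* x = 0$, using $g_*g^* = \id$ gives $h_* g^* x = \pi_* g_* g^* x = \pi_* x = 0$; if $h_*$ is injective on $G(W)$ then $g^* x = 0$ and hence $x = g_* g^* x = 0$. Since $h_*$ is a composite of pushforwards along smooth blow-ups, everything comes down to proving that $\tau_*$ is injective on $G$ for a single smooth blow-up $\tau : \X_Y \r X$ with centre $Y$ of codimension $r$.

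For such a $\tau$ I would use Proposition~\ref{P:projbundleChow} to write $\CH_l(\X_Y) = \tau^*\CH_l(X) \oplus E_l$ with $E_l := \bigoplus_{i=1}^{r-1} \j_* h^{r-1-i}\tau_D^* \CH_{l-i}(Y)$. Composing this isomorphism with $\tau_*$, and using $\tau_*\tau^* = \id$ together with the vanishing of $(\tau_D)_* h^{r-1-i}$ along the $\P^{r-1}$-bundle $\tau_D$ for $1 \leq i \leq r-1$ (the exponent being $< r-1$), shows that $\tau_*$ is exactly the projection onto the first factor, so that $\ker\tau_* = E_l$. It then remains to check, in each of the six cases, that $E_l$ meets the relevant subgroup trivially, respectively maps to zero in the relevant quotient; note that $\tau_*$ preserves homological and algebraic equivalence and is compatible with Abel--Jacobi, so it descends to $\mathrm{Griff}$ and restricts to $T^2$.

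Five of the cases should be routine. For $\CH_0$ ($l=0$) and $\CH^0$ ($l=d_X$) the group $E_l$ is zero, as it only involves $\CH_{<0}(Y)$, resp.\ $\CH_{>\dim Y}(Y)$. For $\CH^1_\hom$ only the term $i=r-1$ survives and equals the line spanned by the class of the exceptional divisor, which is not homologically trivial, so $E_l \cap \CH^1_\hom = 0$. For $\mathrm{Griff}_1$ only $i=1$ survives, so $E_l$ is the image of $\CH_0(Y)$; its homologically trivial part comes from $\CH_0(Y)_\hom = \CH_0(Y)_\alg$ and is therefore carried into $\CH_1(\X_Y)_\alg$, dying in the Griffiths group. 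For $\mathrm{Griff}^2$ the surviving terms are $i=r-1$, from $\CH^1(Y)$, and $i=r-2$, from $\CH^0(Y)$; the homologically trivial part of the former is $\j_*\tau_D^*\Pic^0(Y)$, which is algebraically trivial, while the latter contributes nothing homologically trivial.

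The hard part will be $T^2$, where I must show that $E_\hom = \j_*\tau_D^*\Pic^0(Y)$ meets $T^2(\X_Y) = \ker(AJ^2)$ only in $0$. Here I would use that Abel--Jacobi maps are functorial for algebraic correspondences, so that $\j_*\tau_D^*$ sits in a commutative square relating $AJ^1$ on $Y$ to $AJ^2$ on $\X_Y$ through an induced map of intermediate Jacobians $J^1(Y) \r J^2(\X_Y)$. The cohomological realisation of the blow-up decomposition identifies this map with the inclusion of the summand $H^1(Y)(-1) \hookrightarrow H^3(\X_Y)$, hence it is injective; and $AJ^1$ is an isomorphism on $\Pic^0(Y)$. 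Therefore $AJ^2$ is injective on $\j_*\tau_D^*\Pic^0(Y)$, giving $E_l \cap T^2(\X_Y)=0$. Combined with the reduction of the first paragraph, this will finish the proof; the only genuine subtlety beyond bookkeeping is this last compatibility of Abel--Jacobi with the correspondence $\j_*\tau_D^*$ and the injectivity of the resulting map on $J^1(Y)$.
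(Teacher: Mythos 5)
Your proposal is correct, but it takes a genuinely different route from the paper at the decisive step. Both proofs reduce to a single smooth blow-up $\tau : \X_Y \r X$ via resolution (the paper resolves $\pi$ by a composite of blow-ups $f: Y \r \X$, $g : Y \r X$ with $g = \pi \circ f$, while you resolve $\pi^{-1}$ over $X$; this difference is immaterial). From there the paper argues in one stroke from Theorem \ref{T:blowup}: expanding $\Phi \circ \Theta$ and using that the first row and column of $N$ vanish (Lemma \ref{P:triang}), every term except ${}^t\Gamma_\tau \circ \Gamma_\tau$ factors through some $\Psi_i : \h(\X_Y) \r \h(Y)(i)$ with $1 \leq i \leq r-1$, which annihilates the six groups; since $\Phi \circ \Theta = \Delta_{\X_Y}$, the composite $\tau^*\tau_*$ itself acts as the identity. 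You never invoke Theorem \ref{T:blowup}: instead you convert the statement into injectivity of $\pi_*$, compute $\ker \tau_* = E_l$ from the Chow-level Manin isomorphism of Proposition \ref{P:projbundleChow} together with the Lemma \ref{orth}-type vanishing $(\tau_D)_* \circ h^{r-1-i} \circ \tau_D^* = 0$ for $i \geq 1$, and then check case by case that $E_l$ misses each subgroup, respectively dies in each quotient. Note that your case analysis is essentially the expanded content of the paper's phrase ``for dimension reasons'': for $\mathrm{Griff}_1$, $\mathrm{Griff}^2$, $\CH^1_{\hom}$ and $T^2$ one needs, exactly as you verify, the classical vanishings $\CH^0(Y)_{\hom} = 0$, $\mathrm{Griff}_0(Y) = \mathrm{Griff}^1(Y) = 0$ and the injectivity of $AJ^1$ on $\CH^1(Y)_{\hom}$, not mere dimension counts. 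What each approach buys: the paper's argument is uniform across the six groups and is the intended showcase of the explicit two-sided inverse $\Theta$; yours avoids that machinery, needing only the blow-up isomorphism for Chow groups (provable by Manin's identity principle alone), at the price of the extra inputs you correctly flag — the injectivity of the blow-up decomposition in homology, to identify the homologically trivial part of $E_l$, and the functoriality of the Abel--Jacobi map under correspondences for the $T^2$ case (for the latter, a slightly cleaner variant of your argument applies the left-inverse correspondence $-\Gamma_{\tau_D} \circ h^{i-1} \circ {}^t\Gamma_{\j}$ directly and uses injectivity of $AJ^1$ on $\Pic^0(Y) \otimes \Q$, avoiding any discussion of the induced map $J^1(Y) \r J^2(\X_Y)$). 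As a small bonus, your route proves the sharper statement that $\pi_*$ is injective, hence bijective, on all six groups.
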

\begin{proof} By resolution of singularities, there are morphisms $f :
  Y \r \X$ and $g : Y \r X$, which are composite of smooth blow-ups,
  such that $g = \pi \circ f$. The groups considered in the
  proposition behave functorially with respect to the action of
  correspondences. Therefore it is enough to prove the proposition
  when $\pi$ is a smooth blow-up $\X_Y \r X$ as above. First note that
  $\Psi_i := \h(\X_Y) \r \h(Y)(i)$ acts as zero on the groups
  considered in the proposition when $1 \leq i \leq r-1$, for
  dimension reasons.  Having in mind that the first column and the
  first row of $N$ are zero, and expanding $\Phi \circ \Theta$, we see
  that $\Phi \circ \Theta$ acts like $\pi^*\pi_*$ on the groups of the
  proposition. By Theorem \ref{T:blowup}, the correspondence $\Phi
  \circ \Theta$ acts as the identity on $\CH_l(\X)$. Thus $\pi^*\pi_*$
  acts as the identity on the groups of the proposition.
\end{proof}

\begin{remark}
  As a consequence of Theorem \ref{T:blowup}, we obtain an explicit
  Chow--K\"unneth decomposition for a smooth blow-up $\widetilde{X}_Y$
  in terms of Chow--K\"unneth decompositions of $X$ and
  $Y$.
  Precisely, 
  assume that $X$ and $Y$ are endowed with Chow--K\"unneth
  decompositions $\{\pi^i_X, 0 \leq i \leq 2\dim X\}$ and $\{\pi^i_Y,
  0 \leq i \leq 2\dim Y\}$, respectively.
  Let us define
$$\pi^i_{\X_Y} := \Phi \circ \big( \pi_X^i \oplus  \bigoplus_{j=1}^{r-1}
\pi_Y^{i-2j} \big)
\circ \Theta \in \End\big( \h(\X_Y) \big).$$ Then $\{\pi^i_{\X_Y}, 0
\leq i \leq 2\dim \X_Y\}$ is a Chow--K\"unneth decomposition of
$\X_Y$.
\end{remark}

\section{On the Chow groups of varieties fibred by varieties with
  small Chow groups}
\label{general}

In this section, we consider a projective surjective morphism $f : X
\r B$ onto a quasi-projective variety $B$. We are interested in
obtaining information on the niveau of the Chow groups of $X$, under
the assumption that the Chow groups of the fibres of $f$ over generic
points of closed subvarieties of $B$ are either trivial ($=\Q$) or
finitely generated. Contrary to Section \ref{hypsection}, we do not
assume that $f$ is flat. Let $l$ be a non-negative integer and let
$d_X$ and $d_B$ be the dimensions of $X$ and $B$, respectively. In \S
6.1, we assume that $\CH_{l-i}({X}_{\eta_{D_i}}) = \Q$ for all $0< i
\leq d_B$ and all irreducible subvarieties $D_i \subset B$ of
dimension $i$, and we deduce in Lemma \ref{localisation-seq} by a
localisation sequence argument that $\CH_l(X)$ is spanned by
$\CH_l(X_b)$ for all closed points $b$ in $B$ and by $\CH_l(H)$, where
$H \hookrightarrow X$ is a linear section of $X$ of dimension $d_X +
l$. We then move on to study the subspace of $\CH_l(X)$ spanned by
$\CH_l(X_b)$ for all closed points $b$ in $B$ in the case when
$\CH_l(X_b) =\Q$ in \S 6.2, and in the case when $\CH_l(X_b)$ finitely
generated in \S 6.3. The results of \S\S 6.1 \& 6.2 are combined into
Proposition \ref{linear-theorem}, while the results of \S\S 6.1 \& 6.3
are combined into Propositions \ref{1} and \ref{genth} in \S
6.4. Finally, in \S 6.5, we use Lemma \ref{lemma} to give statements
that only involve the Chow groups of closed fibres when $f$ is defined
over the complex numbers; see Theorems \ref{linear-theorem-complex},
\ref{1-complex} and \ref{genth-complex}.

\subsection{Some general statements}
\begin{lemma} \label{localisation-seq} Let $f : X \r B$ be a
  projective surjective morphism onto a quasi-projective variety $B$
  and let $H \hookrightarrow X$ be a linear section of dimension $\geq
  l+d_B$.  Assume that
  $\CH_{l-i}({X}_{\eta_{D_i}}) = \Q$ for all $0< i \leq d_B$ and all
  irreducible subvarieties $D_i \subset B$ of dimension $i$. Then the
  natural map $\bigoplus_{b \in B} \CH_l(X_b) \oplus \CH_l(H) \r
  \CH_l(X)$ is surjective.
\end{lemma}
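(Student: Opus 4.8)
The plan is to argue by induction on $d_B$, the base case $d_B = 0$ being immediate: then $B$ is a single point $b$, we have $X_b = X$, and the map is surjective already on the factor $\CH_l(X_b)$. For the inductive step I would invoke the localisation sequence
$$\bigoplus_{D \in B^1} \CH_l(X_D) \longrightarrow \CH_l(X) \longrightarrow \CH_{l-d_B}(X_{\eta_B}) \longrightarrow 0,$$
exactly as in the proof of Proposition \ref{P:surj}; here $X_D := f^{-1}(D)$, the sum runs over all irreducible divisors $D$ of $B$, and the right-hand term is identified with $\varinjlim_U \CH_l(X_U)$ by \cite[Lemma 1A.1]{Bloch-lectures}. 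It then suffices to show two things: (i) that $\CH_l(H)$ surjects onto the generic-fibre quotient $\CH_{l-d_B}(X_{\eta_B})$, and (ii) that the image of each $\CH_l(X_D)$ in $\CH_l(X)$ already lies in the image of $\bigoplus_{b \in B} \CH_l(X_b) \oplus \CH_l(H)$.

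For (i), the standing hypothesis applied with $i = d_B$ and $D_{d_B} = B$ gives $\CH_{l-d_B}(X_{\eta_B}) = \Q$ (the term being zero when $l < d_B$). Since $\dim H \geq l + d_B$, I would choose a linear section $W \subseteq H$ of $X$ of dimension $l$; then $[W] \in \CH_l(H)$ is the class of a linear section of $H$, and, exactly as in Proposition \ref{P:surj}, its image in $\CH_{l-d_B}(X_{\eta_B})$ is the class of the linear section $W_{\eta_B}$ of $X_{\eta_B}$, which has positive degree and hence generates $\CH_{l-d_B}(X_{\eta_B}) = \Q$. Here $f|_W : W \to B$ is surjective by Lemma \ref{L:dominant}, as $\dim W = l \geq d_B$. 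This settles surjectivity onto the quotient.

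For (ii), I would apply the inductive hypothesis to the projective surjective morphism $f_D : X_D \to D$ onto the irreducible variety $D$ of dimension $d_B - 1$, taking as linear section $H_D := H \cap X_D$ of $X_D$. The fibres $(X_D)_b$ for $b \in D$ coincide with the fibres $X_b$, and the hypothesis $\CH_{l-i}(X_{\eta_{D'_i}}) = \Q$ for irreducible $D'_i \subset D$ of dimension $0 < i \leq d_B - 1$ is inherited from the hypothesis on $f$. The inductive hypothesis then yields surjectivity of $\bigoplus_{b \in D} \CH_l(X_b) \oplus \CH_l(H_D) \to \CH_l(X_D)$. Pushing forward along $X_D \hookrightarrow X$, the fibre terms land among the $\CH_l(X_b)$ by functoriality of proper pushforward, while $\CH_l(H_D) \to \CH_l(X)$ factors through $\CH_l(H)$ because $H_D \subseteq H$; summing over $D \in B^1$ gives (ii). Combining (i) and (ii) through the localisation sequence then produces the desired surjectivity: given $\alpha \in \CH_l(X)$, its image in the quotient is hit by some class from $\CH_l(H)$ by (i), and the difference lies in the image of $\bigoplus_{D} \CH_l(X_D)$, which is handled by (ii).

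The main obstacle I anticipate is the dimension bookkeeping needed to feed the induction, namely verifying that $H_D = H \cap X_D$ is genuinely a linear section of $X_D$ of dimension $\geq l + (d_B - 1)$, so that it is an admissible choice for the lemma applied to $f_D$. Writing $H = X \cap L$ with $L \subseteq \PP^N$ a linear subspace of codimension at most $d_X - l - d_B$, one has $H_D = X_D \cap L$, and a fibre-dimension estimate (for instance, $f|_H$ is surjective with fibres of dimension $\geq l$ by Lemma \ref{L:dominant}, whence $\dim X_D \geq d_X - 1$) forces $\dim H_D \geq l + d_B - 1$, as required. A secondary point requiring care is the identification of the generic-fibre term as a limit of flat pullbacks and, since $f$ is not assumed flat here, the passage to reduced structures on $X_D$ (which does not affect the $\CH_l$); I would also record that the standing hypothesis is to be read over the effective range $0 < i \leq \min(l, d_B)$.
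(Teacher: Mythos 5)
Your proof is correct and follows essentially the same route as the paper's: induction on $d_B$ via the localisation sequence $\bigoplus_{D \in B^1} \CH_l(X_D) \r \CH_l(X) \r \CH_{l-d_B}(X_{\eta_B}) \r 0$, applying the inductive hypothesis to each $f_D : X_D \r D$ with the linear section $H_D$, factoring the resulting maps through $\CH_l(H)$, and hitting the generic-fibre quotient with an $l$-dimensional linear section of $X$ inside $H$ dominating $B$, exactly as in Proposition \ref{P:surj}. Your dimension bookkeeping for $H_D$ (which the paper asserts without proof) and the remarks on reduced structures and the effective range $0 < i \leq \min(l, d_B)$ are sound supplementary details rather than deviations.
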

\begin{proof}
  We prove the proposition by induction on $d_B$. If $d_B=0$ then the
  statement is obvious. Let us thus consider a morphism $f : X \r B$
  and a linear section $\iota : H \hookrightarrow X$ as in the
  statement of the proposition with $d_B>0$. By Lemma
  \ref{L:dominant}, $f$ restricted to $H$ is surjective. We have the
  localisation exact sequence
  $$\bigoplus_{D \in B^{1}} \CH_l(X_D) \r \CH_l(X) \r
  \CH_{l-d_B}(X_{\eta_B}) \r 0.$$ Here, $B^{1}$ denotes the set of
  codimension-one closed irreducible subschemes of $B$.  For any
  irreducible codimension-one subvariety $D \subset B$, the
  restriction of $\iota$ to $D \r B$ defines a linear section $\iota_D
  : H_D \hookrightarrow X_D$ of dimension $\geq l +d_B - 1$ of $X_D$.
  The restriction of $f : X \r B$ to $D \r B$ defines a surjective
  morphism $X_D \r D$ which together with the linear section $\iota_D$
  satisfies the assumptions of the proposition.  Therefore, by the
  induction hypothesis applied to $X_D \r D$, the map
  $$\bigoplus_{d \in D} \CH_l(X_d) \oplus \CH_l(H_D) \r \CH_l(X_D)$$ is
  surjective. This yields an exact sequence
  $$\bigoplus_{b \in B} \CH_l(X_b) \oplus \bigoplus_{D \in B^{1}}
  \CH_l(H_D) \r \CH_l(X) \r \CH_{l-d_B}(X_{\eta_B}) \r 0.$$ Since each of
  the proper inclusion maps $H_D \r X$ factors through $\iota : H \r
  X$, we see that the map $\bigoplus_{D \in B^{1}} \CH_l(H_D)
  \stackrel{\oplus (\iota_D)_*}{\longrightarrow} \CH_l(X)$ factors
  through $\iota_* : \CH_l(H) \r \CH_l(X)$. In order to conclude, it is
  enough to prove that the composite map $$\CH_l(H) \r \CH_l(X) \r
  \CH_{l-d_B}(X_{\eta_B})$$ is surjective.  If $l < d_B$, then this is
  obvious. Let us then assume that $l \geq d_B$.

  Let $Y$ be an irreducible subvariety of $H$ of dimension $l$ such
  that the composite $Y \hookrightarrow X \r B$ is dominant.  Because
  $\CH_{l-d_B}(X_{\eta_B}) = \Q$ it is enough to see that the class of
  $Y$ in $\CH_l(X)$ maps to a non-zero element in
  $\CH_{l-d_B}(X_{\eta_B})$. But, as in the proof of Proposition
  \ref{P:surj}, $[Y]$ maps to $[Y_{\eta_B}] \neq 0 \in
  \CH_{l-d_B}(X_{\eta_B})$.
\end{proof}

Here is an improvement of Lemma \ref{localisation-seq}:

\begin{lemma} \label{localisation-seq2} Let $f : X \r B$ be a
  projective surjective morphism onto a quasi-projective variety $B$
  and let $H \hookrightarrow X$ be a linear section of dimension $\geq
  l+d_B$.  Assume that: \medskip

  $\bullet$ $\CH_{l-i}({X}_{\eta_{D_i}}) = \Q$ for all $i$ such that $0
  <i < d_B$ and all irreducible subvarieties $D_i \subset B$ of
  dimension $i$.

  $\bullet$ $\CH_{l-d_B}({X}_{\eta_{B}})$ is finitely generated. 
  \medskip

  \noindent Then there exist finitely many closed subschemes
  $\mathcal{Z}_j$ of $X$ of dimension $l$ such that the natural map
  $\bigoplus_j \CH_l(\mathcal{Z}_j) \oplus \bigoplus_{b \in B}
  \CH_l(X_b) \oplus \CH_l(H) \r \CH_l(X)$ is surjective.
\end{lemma}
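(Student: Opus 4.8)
The plan is to run the same localisation argument as in Lemma~\ref{localisation-seq}, but to separate the contribution of the generic fibre $X_{\eta_B}$, which is now only finitely generated rather than $\Q$, from the contribution of the proper closed subvarieties of $B$, whose fibres are still assumed to have trivial Chow groups. The observation that makes this work cleanly is that the finite-generation hypothesis is imposed only at the top level $i=d_B$: for every divisor $D\subset B$ the restricted morphism $X_D\r D$ has $\dim D=d_B-1$, and all the fibre-hypotheses it needs --- including the one for its own generic fibre $X_{\eta_D}$, which asks for $\CH_{l-(d_B-1)}(X_{\eta_D})=\Q$ --- are covered by the first bullet, since $d_B-1<d_B$. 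Thus each divisor restriction already satisfies the hypotheses of Lemma~\ref{localisation-seq}, and I can invoke that lemma as a black box instead of setting up a fresh induction with a moving hypothesis.

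Concretely, I would start from the localisation exact sequence
$$\bigoplus_{D\in B^{1}} \CH_l(X_D)\longrightarrow \CH_l(X)\longrightarrow \CH_{l-d_B}(X_{\eta_B})\longrightarrow 0,$$
and set $H_D:=H\cap X_D$, a linear section of $X_D$ of dimension $\geq l+(d_B-1)$. By Lemma~\ref{localisation-seq} applied to $X_D\r D$, the map $\bigoplus_{d\in D}\CH_l(X_d)\oplus\CH_l(H_D)\r\CH_l(X_D)$ is surjective. Since every inclusion $X_d\hookrightarrow X$ factors through $\bigoplus_{b\in B}\CH_l(X_b)\r\CH_l(X)$ and every $H_D\hookrightarrow X$ factors through $H\hookrightarrow X$, the image of $\bigoplus_{D}\CH_l(X_D)$ in $\CH_l(X)$ --- which by exactness is exactly the kernel of restriction to the generic fibre --- lies in the image of $\bigoplus_{b\in B}\CH_l(X_b)\oplus\CH_l(H)$.

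It remains to account for the quotient. Using finite generation, I would choose finitely many classes generating $\CH_{l-d_B}(X_{\eta_B})$, represent each by an $(l-d_B)$-dimensional subvariety of the generic fibre, and let $\Zcal_j\subseteq X$ be its Zariski closure, an $l$-dimensional closed subscheme of $X$. As in the proof of Proposition~\ref{P:surj}, the restriction map $\CH_l(X)\r\CH_{l-d_B}(X_{\eta_B})$ sends $[\Zcal_j]$ to the chosen generator, so that $\bigoplus_j\CH_l(\Zcal_j)\r\CH_{l-d_B}(X_{\eta_B})$ is surjective. Then for any $\alpha\in\CH_l(X)$ one can subtract a suitable combination of the $[\Zcal_j]$ so that the difference maps to zero in $\CH_{l-d_B}(X_{\eta_B})$; by the exact sequence that difference lies in the kernel handled above, yielding surjectivity of $\bigoplus_j\CH_l(\Zcal_j)\oplus\bigoplus_{b\in B}\CH_l(X_b)\oplus\CH_l(H)\r\CH_l(X)$.

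The main point to get right --- rather than a genuine obstacle --- is the bookkeeping on dimensions ensuring that the divisor restrictions land in the \emph{fully trivial} case of Lemma~\ref{localisation-seq}; this is precisely what allows one to bypass an induction and cite that lemma directly. The degenerate case $l<d_B$, where $\CH_{l-d_B}(X_{\eta_B})=0$ and no $\Zcal_j$ are needed, is then automatic from the divisor contribution alone.
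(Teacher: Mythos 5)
Your proof is correct and takes essentially the same route as the paper: the paper likewise applies Lemma \ref{localisation-seq} directly to each divisor restriction $X_D \r D$ (its hypotheses, including the condition on its own generic fibre, being covered by the first bullet since $\dim D = d_B-1$), and then handles the quotient $\CH_{l-d_B}(X_{\eta_B})$ by lifting finitely many generators to $l$-dimensional closed subschemes $\mathcal{Z}_j$ of $X$. The only (immaterial) difference is that the paper takes the supports of representatives of classes $\alpha_j \in \CH_l(X)$ lifted via the surjection $\CH_l(X) \r \CH_{l-d_B}(X_{\eta_B})$, whereas you take Zariski closures of irreducible subvarieties of the generic fibre, exactly as in the paper's proof of Proposition \ref{1}.
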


\begin{proof}
  If $d_B=0$ then the statement is obvious. Let us thus consider a
  morphism $f : X \r B$ and a linear section $\iota : H
  \hookrightarrow X$ as in the statement of the proposition with
  $d_B>0$. The morphism $f$ restricted to $H$ is surjective; see Lemma
  \ref{L:dominant}. As in the proof of Lemma
  \ref{localisation-seq} we have the localisation exact sequence
  $$\bigoplus_{D \in B^{1}} \CH_l(X_D) \r \CH_l(X) \r
  \CH_{l-d_B}(X_{\eta_B}) \r 0.$$ Each of the morphisms $X_D \r D$
  satisfies the assumptions of Lemma \ref{localisation-seq} and
  by the same arguments as in the proof of Lemma
  \ref{localisation-seq} we get that the image of the map
  $\bigoplus_{b \in B} \CH_l(X_b) \oplus \CH_l(H) \r \CH_l(X)$ contains
  the image of the map $\bigoplus_{D \in B^{1}} \CH_l(X_D) \r \CH_l(X)$.
  Let now $Z_j$ be finitely many closed subschemes of $X_{\eta_B}$
  whose classes $[Z_j] \in \CH_{l-d_B}(X_{\eta_B})$ generate $
  \CH_{l-d_B}(X_{\eta_B})$. By surjectivity of the map $\CH_l(X) \r
  \CH_{l-d_B}(X_{\eta_B})$ there are cycles $\alpha_j \in \CH_l(X)$ that
  map to $[Z_j]$. If $\mathcal{Z}_j$ is the support in $X$ of any
  representative of $\alpha_j$, we then have a surjective map
  $\bigoplus_j \CH_l(\mathcal{Z}_j) \r \CH_l(X) \r
  \CH_{l-d_B}(X_{\eta_B})$. It is then clear that the map $\bigoplus_j
  \CH_l(\mathcal{Z}_j) \oplus \bigoplus_{b \in B} \CH_l(X_b) \oplus
  \CH_l(H) \r \CH_l(X)$ is surjective.
\end{proof}

\subsection{Varieties fibred by varieties with Chow groups generated
  by a linear section}

\begin{lemma} \label{linear-niveau}
  Let $f : X \r B$ be a projective surjective morphism onto a
  quasi-projective variety $B$.  Assume that $\CH_l({X}_b) = \Q$ for
  all closed points $b \in B$.  Then, if $H \hookrightarrow X$ is a
  linear section of dimension $\geq l+d_B$, we have
  $$\im \big(\bigoplus_{b \in B} \CH_l(X_b) \r \CH_l(X) \big) \
  \subseteq \ \im \big( \CH_l(H) \r \CH_l(X) \big). $$
\end{lemma}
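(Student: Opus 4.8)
The plan is to treat one closed point $b \in B$ at a time. Since $\CH_l(X_b) = \Q$ is one-dimensional, the image of $\CH_l(X_b) \r \CH_l(X)$ is the $\Q$-span of the image of any single nonzero class; so it is enough, for each $b$, to exhibit one generator of $\CH_l(X_b)$ that visibly lies in $\im\big(\CH_l(H) \r \CH_l(X)\big)$. The generator I would use is the class of a suitable $l$-dimensional subvariety of the fibre $H_b := H \cap X_b$.

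First I would control $\dim H_b$. Write $H = X \cap L$ with $L \subseteq \P^N$ a linear subspace of codimension at most $d_X - d_B - l$ (as in the proof of Lemma \ref{L:dominant}). Since $\dim H \geq d_B$, that same lemma gives that $f|_H : H \r B$ is surjective, so $H_b \neq \emptyset$ for every closed point $b$. Now $X_b$ has a component $V$ of dimension $\geq d_X - d_B$ (every geometric, hence every closed, fibre of $f$ has dimension at least $d_X - d_B$, as used in the proof of Lemma \ref{L:dominant}), and $V$ meets $L$ because $\dim V$ is at least the codimension of $L$. By the projective dimension theorem, every component of $V \cap L$ has dimension at least $(d_X - d_B) - (d_X - d_B - l) = l$. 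As $V \cap L \subseteq X_b \cap L = H_b$, I may choose an $l$-dimensional subvariety $S \subseteq H_b \subseteq H$ with $S \subseteq X_b$.

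Next I would check that $[S]$ generates $\CH_l(X_b) = \Q$. The composite $\deg \circ\, h^l : \CH_l(X_b) \r \CH_0(X_b) \r \Q$ — intersect with $l$ hyperplane sections, then take the degree of the resulting zero-cycle — sends $[S]$ to $\deg S > 0$, the degree of $S$ as a subvariety of $\P^N$. Since $\CH_l(X_b)$ is one-dimensional, this nonvanishing forces $[S] \neq 0$, so $[S]$ is a generator.

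Finally I would reconcile the two pushforwards. The subvariety $S$ maps into $X$ both as a closed subvariety of $X_b$ and as a closed subvariety of $H$, and these two closed immersions $S \hookrightarrow X$ coincide; hence by functoriality of proper pushforward the image of $[S] \in \CH_l(X_b)$ and the image of $[S] \in \CH_l(H)$ are the same class $[S] \in \CH_l(X)$. Thus the image of the generator, and therefore of all of $\CH_l(X_b)$, lies in $\im\big(\CH_l(H) \r \CH_l(X)\big)$; summing over all closed points $b$ yields the lemma. The one point requiring care is the uniform fibre-dimension estimate $\dim H_b \geq l$ (together with the fact that a top-dimensional component of $X_b$ meets $L$), which guarantees the existence of the $l$-dimensional $S$; the remaining ingredients are the positivity of degrees and the formal compatibility of pushforwards.
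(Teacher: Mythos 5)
Your proof is correct and follows essentially the same route as the paper: work one closed point $b$ at a time, use surjectivity of $f|_H$ (Lemma \ref{L:dominant}) to produce an $l$-dimensional subvariety $S \subseteq H_b$, and use $\CH_l(X_b) = \Q$ to conclude that the image of $\CH_l(X_b)$ is spanned by $[S]$, which comes from $\CH_l(H)$. The only difference is that you spell out two points the paper treats tersely — the bound $\dim H_b \geq l$ via the projective dimension theorem rather than via fibre dimensions of the dominant projective morphism $f|_H$, and the nonvanishing of $[S]$ in $\CH_l(X_b)$ via positivity of $\deg(h^l \cdot [S])$, which the paper leaves implicit but which is genuinely needed for $[S]$ to generate the one-dimensional space.
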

\begin{proof}
  Let $b$ be a closed point of $B$ and fix $H \hookrightarrow X$ a
  linear section of dimension $\geq l+d_B$. The morphism $f$
  restricted to $H$ is surjective; see Lemma \ref{L:dominant}. Let
  $Z_l$ be an irreducible closed subscheme of $X$ of dimension $l$
  which is supported on $X_b$. Since $f|_H : H \r B$ is a dominant
  projective morphism, its fibre $H_b$ over $b$ is non-empty and has
  dimension $\geq l $. By assumption $\CH_l({X}_b)=\Q$, so that a
  rational multiple of $[Z_l]$ is rationally equivalent to an
  irreducible closed subscheme of $H_b$ of dimension $l$. Therefore
  $[Z_l] \in \CH_l(X_b)$ belongs to the image of the natural map
  $\CH_l(H_b) \r \CH_l(X_b)$. Thus the image of $\CH_l(X_b) \r
  \CH_l(X) $ is contained in the image of $\CH_l(H) \r \CH_l(X)$.
\end{proof}

\begin{remark}
  It is interesting to decide whether or not it is possible to
  parametrise such $l$-cycles by a variety of dimension $d_B$; see
  Proposition \ref{hilbert}.
\end{remark}

\begin{proposition} \label{linear-theorem} Let $f : X \r B$ be a
  projective surjective morphism onto a quasi-projective variety $B$.
  Assume that $\CH_{l-i}({X}_{\eta_{D_i}}) = \Q$ for all $0 \leq i
  \leq d_B$ and all irreducible subvarieties $D_i \subset B$ of
  dimension $i$.  Then, if $H \hookrightarrow X$ is a linear section
  of dimension $\geq l+d_B$, the pushforward map $ \CH_l(H) \r
  \CH_l(X)$ is surjective.
  In particular, $\CH_l(X)$ has niveau $\leq d_B$.
\end{proposition}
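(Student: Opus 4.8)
The plan is to derive the proposition by combining the two preceding lemmas, each of which disposes of exactly one half of the hypothesis. First I would observe that the assumption ``$\CH_{l-i}(X_{\eta_{D_i}}) = \Q$ for all $0 \leq i \leq d_B$'' splits cleanly according to whether $i=0$ or $i>0$. When $i=0$, an irreducible subvariety $D_0 \subset B$ of dimension $0$ is a closed point $b$, its generic point $\eta_{D_0}$ is $b$ itself, and the hypothesis reads $\CH_l(X_b) = \Q$ for all closed points $b \in B$; this is precisely the hypothesis of Lemma \ref{linear-niveau}. When $0 < i \leq d_B$, the condition is verbatim the hypothesis required by Lemma \ref{localisation-seq}.

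With this dictionary in hand, the argument is a two-step combination. By Lemma \ref{localisation-seq}, for any linear section $H \hookrightarrow X$ of dimension $\geq l+d_B$ the natural map $\bigoplus_{b \in B} \CH_l(X_b) \oplus \CH_l(H) \r \CH_l(X)$ is surjective. By Lemma \ref{linear-niveau}, the image of the first summand $\bigoplus_{b \in B} \CH_l(X_b)$ inside $\CH_l(X)$ is already contained in the image of $\CH_l(H) \r \CH_l(X)$. Substituting the second fact into the first, the image of $\CH_l(H)$ alone exhausts $\CH_l(X)$, so $\CH_l(H) \r \CH_l(X)$ is surjective, which is the main assertion.

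For the final clause I would specialise $H$ to a linear section of dimension exactly $l+d_B$, which still meets the bound $\geq l+d_B$. Then $H$ is a closed subscheme of $X$ of dimension $l+d_B$ whose $\CH_l$ surjects onto $\CH_l(X)$. Since the hypotheses concern triviality of the Chow groups of the generic fibres over subvarieties of $B$, they persist after base change to $\Omega$, so the same argument gives surjectivity of $\CH_l(H_\Omega) \r \CH_l(X_\Omega)$; Definition \ref{D:niveau} with $r=d_B$ then yields that $\CH_l(X)$ has niveau $\leq d_B$.

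I do not expect a genuine obstacle here: all the analytic content is carried by Lemmas \ref{localisation-seq} and \ref{linear-niveau}, and the proof amounts to recognising that the stated hypothesis packages both of their hypotheses simultaneously and that the conclusion of Lemma \ref{linear-niveau} is exactly what is needed to absorb the fibrewise summand $\bigoplus_{b \in B}\CH_l(X_b)$ appearing in Lemma \ref{localisation-seq}. The only points demanding a little care are the bookkeeping of the index range (reading $i=0$ as the closed-fibre condition and $i>0$ as the generic-fibre condition) and the routine descent of the hypotheses to $\Omega$ for the niveau statement.
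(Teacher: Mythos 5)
Your proof is correct and matches the paper's argument exactly: the paper's proof is literally the one-line ``combination of Lemma \ref{localisation-seq} and Lemma \ref{linear-niveau}'', with the $i=0$ case of the hypothesis feeding Lemma \ref{linear-niveau} and the $0<i\leq d_B$ cases feeding Lemma \ref{localisation-seq}, just as you spell out. Your closing remark about passing to $\Omega$ for the niveau clause is glossed over in the paper as well (in the applications of Section \ref{general} the ground field is $\C$, itself a universal domain, so no base change is actually needed), so nothing is missing relative to the paper's own treatment.
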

\begin{proof} This is a combination of Lemma
  \ref{localisation-seq} and Lemma \ref{linear-niveau}.
\end{proof}

\subsection{An argument involving relative Hilbert schemes}

Let $f : X \r B$ be a generically smooth, projective morphism defined
over the field of complex numbers $\C$ onto a smooth
quasi-projective variety $B$.
Let $B^\circ \subseteq B$ be the smooth locus of $f$ and let $f^\circ
: X^\circ \r B^\circ$ be the pullback of $f : X \r B$ along the open
inclusion $B^\circ \hookrightarrow B$ so that we have a cartesian
square
\begin{center}
  $\xymatrix{X^\circ \ar[d]_{f^\circ} \ar@{^{(}->}[r] & X \ar[d]^f \\
    B^\circ \ar@{^{(}->}[r] & B. }$
\end{center}
We assume that there is a non-negative integer $l$ such that for all
closed points $b \in B^\circ(\C)$ the cycle class map
$\CH_l(X_b) \r \mathrm{H}_{2l}(X_b)$ is an isomorphism.\medskip

Let $\pi_d : {\mathrm{Hilb}}^{ d}_l(X/B) \r B$ be the relative Hilbert
scheme whose fibres over the points $b$ in $B$ parametrise the closed
subschemes of $X_b$ of dimension $l$ and degree $d$, and let $p_d :
\mathcal{C}_l^{d} \r {\mathrm{Hilb}}^{d}_l(X/B)$ be the universal
family over ${\mathrm{Hilb}}^{d}_l(X/B)$; see \cite[Theorem
1.4]{Kollar}.  We have the following commutative diagram, where all
the morphisms involved are proper:
  \begin{center} $ \xymatrix{
      \mathcal{C}_l^{d} \ar[d]_{p_d} \ar[rr]^{q_d} & & X. \ar[lldd] \\
      {\mathrm{Hilb}}^{d}_l(X/B) \ar[d]_{\pi_d} & \\ B}$
  \end{center} We then consider the disjoint unions
  ${\mathrm{Hilb}}_l(X/B) := \coprod_{d \geq 0}
  {\mathrm{Hilb}}^{d}_l(X/B)$ and $\mathcal{C}_l := \coprod_{d \geq 0}
  \mathcal{C}^d_l$, and denote $\pi : {\mathrm{Hilb}}_l(X/B) \r B$, $p
  : \mathcal{C}_l \r \mathrm{Hilb}_l(X/B)$ and $q : \mathcal{C}_l \r
  X$ the corresponding maps.
  
  \noindent Let us then denote \begin{center} $\mathrm{Irr}_l(X/B) :=
    \{\mathcal{H} : \mbox{$\mathcal{H}$ is an irreducible component of
      ${\mathrm{Hilb}}^{d}_l(X/B)$ for some $d$} \}$.
 \end{center}
 For a subset $\mathcal{E} \subset \mathrm{Irr}_l(X/B) $, we define
 the following closed subscheme of $B^\circ$: $$Z_{\mathcal{E}} :=
 B^\circ \cap \bigcap_{\mathcal{H} \in \mathcal{E}}
 \pi(\mathcal{H}).$$ We say that a finite subset $\mathcal{E}$ of
 $\mathrm{Irr}_l(X/B) $ is \emph{spanning} at a point $t \in
 Z_\mathcal{E}(\C)$ if $\mathrm{H}_{2l}(X_t)$ is spanned by the set
 $\{ \, cl(q_*[p^{-1}(u)]) : u\in \mathcal{H}, \mathcal{H} \in
 \mathcal{E}, \pi(u)=t \, \}.$ Note that, given $\mathcal{H} \in
 \mathrm{Irr}_l(X/B)$ and $u,u' \in \mathcal{H}$ such that
 $\pi(u)=\pi(u')=t$, $cl(q_*[p^{-1}(u)]) = cl(q_*[p^{-1}(u')]) \in
 \mathrm{H}_{2l}(X_t)$ if $u$ and $u'$ belong to the same connected
 component in $\pi^{-1}(t)$. \medskip

 \noindent \textbf{Claim.} Let $\mathcal{E}$ be a finite subset of
 $\mathrm{Irr}_l(X/B) $ that is spanning at a closed point $t \in
 Z_\mathcal{E}(\C)$. Then, for all points $s \in Z_\mathcal{E}(\C)$
 belonging to an irreducible component of $Z_\mathcal{E}$ that
 contains $t$, $\mathrm{H}_{2l}(X_s)$ is spanned by the set $\{ \,
 cl(q_*[p^{-1}(v)]) : v\in \mathcal{H}, \mathcal{H} \in \mathcal{E},
 \pi(v)=s \, \}.$ \medskip
 
 \noindent Indeed, consider any finite subset $\mathcal{E}$ of
 $\mathrm{Irr}_l(X/B) $. The local system of $\Q$-vector spaces
 $R_{2l}(f_0)_*\Q$ on $B^\circ$ restricts to a local system
 $(R_{2l}(f_0)_*\Q)|_{Z_\mathcal{E}}$ on $Z_\mathcal{E}$. If $t$ is a
 complex point on $Z_\mathcal{E}$, let $r_t$ be the rank of the
 subspace of $\mathrm{H}_{2l}(X_t)$ spanned by $\{ \,
 cl(q_*[p^{-1}(u)]) : u\in \mathcal{H}, \mathcal{H} \in \mathcal{E},
 \pi(u)=t \, \}.$ If we see this latter set as a set of sections at
 $t$ of the local system $R_{2l}(f_0)_*\Q$, then these sections extend
 locally to constant sections of the local system
 $(R_{2l}(f_0)_*\Q)|_{Z_\mathcal{E}}$ on $Z_\mathcal{E}$. This shows
 that the rank $r_t$ is locally constant. If $\mathcal{E}$ is spanning
 at the point $t \in Z_\mathcal{E}(\C)$, then $r_t$ is maximal, equal
 to $\dim_\Q \mathrm{H}_{2l}(X_t)$. The subset of $Z_\mathcal{E}(\C)$
 consisting of points $s$ in $Z_\mathcal{E}(\C)$ for which $r_s =
 \dim_\Q \mathrm{H}_{2l}(X_s)$ is therefore both open and closed in
 $Z_\mathcal{E}$. It contains then the irreducible components of
 $Z_\mathcal{E}$ that contain $t$.

 \begin{lemma} \label{locally-closed} There exists a finite subset
   $\mathcal{E}$ of $\mathrm{Irr}_l(X/B)$ such that $B^\circ =
   Z_\mathcal{E}$ and such that $\mathcal{E}$ is spanning at every
   point $t \in B^\circ(\C)$.
\end{lemma}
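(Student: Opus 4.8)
The plan is to use the Claim established just above to reduce the statement to the much more modest task of producing a \emph{finite} set of irreducible components that each \emph{dominate} $B$ and that are spanning at a \emph{single} very general point. The point of insisting on dominant components is that if every $\mathcal{H} \in \mathcal{E}$ satisfies $\pi(\mathcal{H}) = B$, then $Z_\mathcal{E} = B^\circ \cap \bigcap_{\mathcal{H} \in \mathcal{E}} \pi(\mathcal{H}) = B^\circ$, and $B^\circ$ is irreducible (being a nonempty Zariski-open subset of the irreducible variety $B$). Thus $Z_\mathcal{E}$ has a single irreducible component, and the Claim upgrades spanning at one point of $B^\circ(\C)$ to spanning at \emph{every} point of $B^\circ(\C)$, which is exactly what is required.

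First I would record the basic spanning fact at an arbitrary fibre. For every $b \in B^\circ(\C)$ the cohomology group $\mathrm{H}_{2l}(X_b)$ is finite-dimensional (as $X_b$ is projective), and by hypothesis the cycle class map $\CH_l(X_b) \r \mathrm{H}_{2l}(X_b)$ is an isomorphism. Since $\CH_l(X_b)$ is generated by the classes of the $l$-dimensional closed subschemes of $X_b$, and each such subscheme is the universal family fibre $p^{-1}(u)$ over a point $u \in \pi^{-1}(b)$, the finite set of classes $\{cl(q_*[p^{-1}(u)]) : u \in \pi^{-1}(b)\}$ spans $\mathrm{H}_{2l}(X_b)$.

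Next I would separate out the dominant components. Call $\mathcal{H} \in \mathrm{Irr}_l(X/B)$ dominant if $\pi(\mathcal{H}) = B$; for a non-dominant $\mathcal{H}$ the image $\pi(\mathcal{H})$ is a proper closed subset of $B$ (closed since $\pi$ is proper, proper since it is not all of the irreducible $B$). Now $\mathrm{Irr}_l(X/B)$ is a countable set, since there are countably many degrees $d$ and each finite-type scheme $\mathrm{Hilb}^d_l(X/B)$ has finitely many irreducible components. Hence $\bigcup_{\mathcal{H} \text{ non-dominant}} \pi(\mathcal{H})$ is a countable union of proper closed (so nowhere dense) analytic subsets of the complex manifold $B^\circ(\C)$. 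By a Baire category argument there is a point $b$ lying in $B^\circ(\C)$ and in none of these images, so that $\pi^{-1}(b)$ meets only dominant components. Combined with the previous paragraph, $\mathrm{H}_{2l}(X_b)$ is then spanned by classes coming from dominant components alone, and by finite-dimensionality a finite subset $\mathcal{E} \subseteq \mathrm{Irr}_l(X/B)$ of dominant components already spans. This $\mathcal{E}$ is spanning at $b$ and, by construction, consists of dominant components.

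Finally, with such an $\mathcal{E}$ in hand, $Z_\mathcal{E} = B^\circ$ is irreducible and $b \in Z_\mathcal{E}(\C)$, so the Claim gives that $\mathcal{E}$ is spanning at every point of $B^\circ(\C)$; together with $Z_\mathcal{E} = B^\circ$ this is the assertion of the lemma. I expect the crux of the argument to be the very general point step: one must simultaneously guarantee that a single point avoids the countably many images of the non-dominant components (so that only dominant components contribute) and that finitely many dominant components already span, and it is precisely the finite-dimensionality of $\mathrm{H}_{2l}(X_b)$ together with the Baire category avoidance that makes this possible.
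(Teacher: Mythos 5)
Your proof is correct, and while it shares the paper's two essential ingredients --- the local-constancy Claim and a Baire category argument resting on the countability of $\mathrm{Irr}_l(X/B)$ --- it deploys them in a genuinely different arrangement. The paper argues by covering: for each $t \in B^\circ(\C)$ it picks an arbitrary finite spanning set $\mathcal{E}_t$ (no dominance condition) and an irreducible component $Y_{\mathcal{E}_t}$ of $Z_{\mathcal{E}_t}$ through $t$, notes that the $Y_{\mathcal{E}_t}$ form a countable family of closed subsets covering $B^\circ(\C)$, and extracts by Baire one $Y_{\mathcal{E}}$ with nonempty analytic interior, hence equal to $B^\circ$, over which the Claim then spreads the spanning property. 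You apply Baire on the avoidance side instead: since the relative Hilbert scheme is projective over $B$, each $\pi(\mathcal{H})$ is closed, so non-dominant components have nowhere dense image in $B^\circ(\C)$, and a very general point $b$ lies under dominant components only; surjectivity of the cycle class map together with finite-dimensionality of $\mathrm{H}_{2l}(X_b)$ then yields a finite spanning set $\mathcal{E}$ consisting of dominant components, so that $Z_\mathcal{E} = B^\circ$ holds \emph{by construction} rather than being extracted a posteriori, and a single application of the Claim on the irreducible set $B^\circ$ finishes (your use of irreducibility of $B$, hence of $B^\circ$, is legitimate: it is the paper's standing convention, and the paper's own proof reduces to this case ``component-wise''). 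Your route buys cleaner bookkeeping --- no tracking of irreducible components of the various $Z_{\mathcal{E}_t}$, and a transparent reason why one finite $\mathcal{E}$ works globally, namely that at a very general point only dominant families can contribute --- at the mild cost of invoking properness of $\pi$ explicitly to make the dominant/non-dominant dichotomy work; the paper's covering argument never needs that dichotomy and adapts unchanged if one does not wish to single out dominant components.
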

\begin{proof} By working component-wise, we may assume that $B$ is
  irreducible. By assumption on $f: X \r B$, $\mathrm{H}_{2l}(X_t)$ is
  spanned by algebraic cycles on $X_t$ for all points $t \in
  B^\circ(\C)$. Thus, for all points $t \in B^\circ(\C)$, there is a
  finite subset $\mathcal{E}_t$ of $\mathrm{Irr}_l(X/B) $ that is
  spanning at $t$. For each point $t$, choose an irreducible component
  $Y_{\mathcal{E}_t}$ of $Z_{\mathcal{E}_t}$ that contains
  $t$. According to the claim above, $\mathcal{E}_t$ is spanning at
  every point $s \in Y_{\mathcal{E}_t}(\C)$. Now, we have $B^\circ(\C)
  = \coprod_{t \in B^\circ(\C)} Y_{\mathcal{E}_t}(\C)$. Since there
  are only countably many finite subsets of $\mathrm{Irr}_l(X/B)$ and
  since $Z_\mathcal{E}$ has only finitely many irreducible components,
  we see that the latter union is in fact a countable union. This
  yields that $B^\circ = Y_\mathcal{E}$ for some finite subset
  $\mathcal{E}$ of $\mathrm{Irr}_l(X/B)$ that is spanning at every
  point in $Y_\mathcal{E}(\C)$. We then conclude that $B^\circ =
  Z_\mathcal{E}$ and that $\mathcal{E}$ is spanning at every point $ t
  \in Y_\mathcal{E}(\C) = B^\circ(\C)$.
 \end{proof}

\begin{proposition} \label{hilbert} Let $f : X \r B$ be a generically
  smooth and projective morphism defined over $\C$ onto a smooth
  quasi-projective variety $B$.  Let $B^\circ \subseteq B$ be the
  smooth locus of $f$.  Assume that there is an integer $l \leq d_X -
  d_B$ such that for all closed points $b \in B^\circ(\C)$ the cycle
  class map $\CH_l(X_b) \r \mathrm{H}_{2l}(X_b)$ is an
  isomorphism. Then $\im \big(\bigoplus_{b \in B^\circ} \CH_l(X_b) \r
  \CH_l(X) \big)$ is supported on a closed subvariety of $X$ of
  dimension $d_B+l$.

  If moreover $X$ is smooth, then there exist a smooth
  quasi-projective variety $\widetilde{B}$ of dimension $d_B$ and a
  correspondence $\Gamma \in \CH_{d_B+l}(\widetilde{B} \times X)$ such
  that $\Gamma_* : \CH_0(\widetilde{B}) \r \CH_l(X)$ is well-defined
  and
  $$\im \big(\bigoplus_{b \in B^\circ} \CH_l(X_b) \r \CH_l(X) \big) \
  \subseteq \ \im \big(\Gamma_* : \CH_0(\widetilde{B}) \r \CH_l(X)
  \big). $$
\end{proposition}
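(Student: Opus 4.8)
The plan is to run a Noetherian induction on $\dim B$ whose engine is a \emph{fibrewise} spanning statement over a dense open of $B$, and then, in the smooth case, to package the spanning cycles into a correspondence. First I would invoke Lemma \ref{locally-closed} to produce a finite family $\mathcal{E}$ of irreducible components $\mathcal{H}$ of relative Hilbert schemes, each dominating $B$, whose universal cycles span $\Ho_{2l}(X_t)$ at every $t \in B^\circ(\C)$. Since a very general fibre is isomorphic to the geometric generic fibre (Lemma \ref{lemma}) and the cycle class map is an isomorphism on fibres, the group $\CH_l(X_{\overline{\eta}_B}) \cong \Ho_{2l}(X_{\overline{\eta}_B})$ is finite dimensional; I would choose finitely many points $w_s$ of the members of $\mathcal{E}$, lying over the generic point of $B$ (possibly after a finite extension of $\C(B)$), whose associated cycles $\zeta_s := q_*[p^{-1}(w_s)]$ span this group. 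Their closures $\widetilde{B}_s \subseteq \mathcal{H}$ are then $d_B$-dimensional and generically finite over $B$, and carry the universal families I shall use.

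The key step is to show that, over a dense open $U \subseteq B^\circ$, the spread-out cycles $\zeta_{s,b}$ span $\CH_l(X_b)$ for \emph{every} $b \in U$, fibrewise. Here I would use that the classes $cl(\zeta_{s,b})$ are flat sections of the local system $R_{2l}(f^\circ)_*\Q$, so that the locus where they span $\Ho_{2l}(X_b)$ is open; shrinking to $U$ makes it all of $U$. The cycle class isomorphism $\CH_l(X_b) \cong \Ho_{2l}(X_b)$, in particular its injectivity, then upgrades homological spanning to spanning in the Chow group itself. The crucial consequence is that for $b \in U$ every $[c] \in \CH_l(X_b)$ equals a combination $\sum_s a_s [\zeta_{s,b}]$ \emph{already in} $\CH_l(X_b)$; pushing forward along the closed immersion $X_b \hookrightarrow X$, this identity persists in $\CH_l(X)$ with no localisation error, and exhibits $[c]$ as supported on $W_U := \bigcup_s q(p^{-1}(\widetilde{B}_s))$, a subvariety of dimension $\leq d_B + l$.

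For the remaining fibres I would apply the inductive hypothesis to $f$ restricted over $B^\circ \setminus U$, a locally closed set of dimension $< d_B$ contained in the smooth locus, so that $f$ stays generically smooth and the cycle class hypothesis is inherited; the base case $\dim B = 0$ is immediate, since a single fibre's $\CH_l$ is then finite dimensional and spanned by finitely many $l$-dimensional subvarieties. Taking closures in $X$ and adjoining them to $W_U$ produces a closed subvariety of dimension $\leq d_B + l$ carrying the whole image $\im\big(\bigoplus_{b\in B^\circ}\CH_l(X_b)\to\CH_l(X)\big)$, which is the first assertion.

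Finally, when $X$ is smooth I would turn the same data into a correspondence. Resolving the parameter spaces $\widetilde{B}_s$ and pulling back the universal subschemes $\mathcal{C}_\mathcal{H} \subseteq \mathcal{H} \times X$ gives cycles $\Gamma_s \in \CH_{d_B+l}(\widetilde{B}_s \times X)$ with $(\Gamma_s)_*[w] = [\zeta_{s,\pi(w)}]$; smoothness of $X$ (and of $\widetilde{B}_s$) makes $(\Gamma_s)_*: \CH_0(\widetilde{B}_s) \to \CH_l(X)$ well defined. Setting $\widetilde{B}$ to be the disjoint union of the $\widetilde{B}_s$ together with the resolved parameter spaces supplied by the induction (padded, if necessary, to dimension $d_B$), and $\Gamma$ the sum of the corresponding $\Gamma_s$, the image of $\Gamma_*$ contains all the $[\zeta_{s,b}]$ and hence, by the fibrewise spanning above, the entire image of $\bigoplus_{b}\CH_l(X_b)$. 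I expect the main obstacle to be precisely the fibrewise spanning step: one must rule out any drop of rank at special points of $U$ and keep the rational equivalences \emph{inside} individual fibres, so that the cycle class isomorphism can be leveraged without introducing uncontrolled classes supported on the large locus $f^{-1}(B^\circ\setminus U)$. The local constancy of $R_{2l}(f^\circ)_*\Q$ together with the injectivity of the cycle class map is exactly what makes this possible.
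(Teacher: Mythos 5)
Your proof is essentially correct, and it rests on the same machinery as the paper's argument (relative Hilbert schemes with their universal families, the local system $R_{2l}(f^\circ)_*\Q$, the cycle class isomorphism upgrading homological spanning to spanning in $\CH_l(X_b)$, and the final packaging $(\Gamma_i)_*=(q_i)_*p_i^*$ with well-definedness coming from properness of the support of $\Gamma$ over $X$ as in \cite[Remark 16.1]{Fulton}). But the middle of your argument takes a genuinely different route. The paper extracts $d_B$-dimensional parameter spaces by taking \emph{linear sections} $\widetilde{B}_i$ of resolutions $\widetilde{\mathcal{H}}_i$ of the components in $\mathcal{E}$, and the decisive point is a refinement of Lemma \ref{L:dominant}: $r_i^{-1}(b)$ meets \emph{every connected component} of $\widetilde{\mathcal{H}}_{i,b}$ for every $b\in B(\C)$. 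Since the class $cl(q_*[p^{-1}(u)])$, and hence (by injectivity of the cycle class map) the Chow class itself, depends only on the connected component of $\pi^{-1}(b)$ containing $u$, the linear sections capture all the spanning cycles at \emph{every} point of $B^\circ$ at once -- Lemma \ref{locally-closed} already guarantees spanning everywhere on $B^\circ$, so no exceptional locus arises and no induction is needed. You instead take closures of finitely many points over the generic point of $B$; these generically finite multisections are only guaranteed to carry the spanning cycles over a dense open $U$ (at special $b$ they may miss the relevant connected components of the Hilbert fibre), which is why you must run a Noetherian induction on $\dim B$ over $B^\circ\setminus U$. That is a legitimate alternative: your multisections are perhaps more geometrically transparent, while the paper's linear-section trick buys a uniform statement over all of $B^\circ$ in one stroke.

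Two small repairs your route needs. First, for the openness of your spanning locus you should shrink at the outset to the open set over which all the $\widetilde{B}_s\r B$ are finite \'etale; there the classes are honest (multivalued) flat sections, the rank of their span is locally constant, and spanning at very general points propagates to the whole \'etale locus -- at branch-merging points the naive openness claim is delicate, but since you only need a dense open $U$ this costs nothing. Second, your inductive call applies the proposition to $f$ restricted over (the closure of) $B^\circ\setminus U$, which need not be smooth, whereas the statement assumes a smooth base; either observe that smoothness of $B$ is never used in the argument (only smoothness of $f$ over $B^\circ$, which survives restriction), or normalize the base before inducting. Both points are fixable and do not affect the substance; note also that the well-definedness of $\Gamma_*$ on $\CH_0(\widetilde{B})$ is really a consequence of the support of $\Gamma$ being proper over $X$, rather than of smoothness of $\widetilde{B}_s$.
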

\begin{proof} By Lemma \ref{locally-closed}, there exists a finite set
  $\mathcal{E}$ of irreducible components of ${\mathrm{Hilb}}_l(X/B)$
  such that $B^\circ = Z_\mathcal{E}$ and such that for all points $t
  \in B^\circ(\C)$ the set $\{ \, cl(q_*[p^{-1}(u)]) : u\in
  \mathcal{H}, \mathcal{H} \in \mathcal{E}, \pi(u)=t \, \}$ spans
  $\mathrm{H}_{2l}(X_t)$. Denote $\mathcal{H}_i$ the irreducible
  components of ${\mathrm{Hilb}}_l(X/B)$ that belong to $\mathcal{E}$
  and let $\widetilde{\mathcal{H}}_i \r \mathcal{H}_i$ be resolutions
  thereof.  For all $i$, pick a smooth linear section $\widetilde{B}_i
  \r \widetilde{\mathcal{H}}_i$ of dimension $d_B$.  Lemma
  \ref{L:dominant} shows that $r_i : \widetilde{B}_i \r
  \widetilde{\mathcal{H}}_i \r \mathcal{H}_i \r B$ is surjective and a
  refinement of its proof shows that, for all points $b \in B(\C)$,
  $r_i^{-1}(b)$ contains a point in every connected component of
  $\widetilde{\mathcal{H}}_{i,b}$.  Consider then $p_i :
  (\mathcal{C}_l^{})|_{\widetilde{B}_i} \r \widetilde{B}_i$ the
  pullback of the universal family $p: \mathcal{C}_l^{} \r
  {\mathrm{Hilb}}^{}_l(X/B)$ along $\widetilde{B}_i \hookrightarrow
  \widetilde{\mathcal{H}}_i \r {\mathcal{H}}_i \hookrightarrow
  {\mathrm{Hilb}}^{}_l(X/B)$.  For each $i$, we have the following
  picture
  \begin{center}
    $\xymatrix{ (\mathcal{C}_l^{})|_{\widetilde{B}_i} \ar[r]^{\
        \ \  q_i} \ar[d]_{p_i} & X \\ \widetilde{B}_i}$
  \end{center}
  and we have $$\im \big(\bigoplus_{b \in B^\circ} \CH_l(X_b) \r
  \CH_l(X) \big) \ \subseteq \ \sum_i \im \big((q_i)_* :
  \CH_l((\mathcal{C}_l^{})|_{\widetilde{B}_i}) \r \CH_l(X) \big)$$ so
  that the group on the left-hand side is supported on the union of
  the scheme-theoretic images of the morphisms $q_i$.

  If $X$ is smooth, we define $\Gamma_i \in
  \CH_{d_B+l}(\widetilde{B}_i \times X)$ to be the class of the image
  of $(\mathcal{C}_l^{})|_{\widetilde{B}_i}$ inside $\widetilde{B}_i
  \times X$. Because $q : \mathcal{C}_l^{d} \r X$ is proper for all $d
  \geq 0$, $\Gamma_i$ has a representative which is proper over
  $X$. It is therefore possible \cite[Remark 16.1]{Fulton} to define
  maps $(\Gamma_i)_* : \CH_0(\widetilde{B}_i) \r \CH_l(X)$ for all
  $i$. In fact, we have $(\Gamma_i)_* = (q_i)_*p_i^*$.  Finally, we
  define $\widetilde{B}$ to be the disjoint union of the
  $\widetilde{B}_i$'s and $\Gamma \in \CH_{d_B+l}(\widetilde{B} \times
  X)$ to be the class of the disjoint union of the correspondences
  $\Gamma_i$.
\end{proof}

\subsection{Complex varieties fibred by varieties with small Chow
  groups} From now on, the base field $k$ is assumed to be the field
of complex numbers $\C$.

\begin{proposition}\label{1}  Let  $f : X \r C$ be a generically smooth,
  projective morphism defined over $\C$ to a smooth curve. Assume that
  \medskip

  $\bullet$ $\CH_l({X}_c)$ is finitely generated for all closed points
  $c \in C$,

  $\bullet$ $\CH_l({X}_c) \r \mathrm{H}_{2l}(X_c)$ is an isomorphism for a
  general closed point $c \in C$,

  $\bullet$ $\CH_{l-1}({X}_\eta)$ is finitely generated, where $\eta$ is
  the generic point of $C$. \medskip

  \noindent Then $\CH_l(X)$ has niveau $\leq 1$.
\end{proposition}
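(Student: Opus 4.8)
The plan is to combine Lemma \ref{localisation-seq2}, which isolates the contribution of the fibres, with the relative Hilbert scheme argument of Proposition \ref{hilbert}, which controls the fibres over the good locus, and with the finite generation hypothesis, which handles the finitely many bad fibres. We may assume $0 \le l \le d_X - 1$, the case $l \ge d_X$ being immediate since then $\CH_l(X)$ is supported on the top-dimensional components of $X$, which have dimension $\le l$.

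First I would invoke Lemma \ref{localisation-seq2} with $B = C$, so that $d_B = 1$: the hypothesis on $\CH_{l-i}(X_{\eta_{D_i}})$ for $0 < i < d_B$ is then vacuous, while the finite generation of $\CH_{l-1}(X_\eta)$ is exactly the third assumption. This produces finitely many closed subschemes $\mathcal{Z}_j \subset X$ of dimension $l$ and a linear section $H \hookrightarrow X$ of dimension $l+1$ such that $\bigoplus_j \CH_l(\mathcal{Z}_j) \oplus \bigoplus_{b \in C} \CH_l(X_b) \oplus \CH_l(H) \r \CH_l(X)$ is surjective. The pieces $\CH_l(\mathcal{Z}_j)$ and $\CH_l(H)$ are already supported in dimension $\le l+1$, so the whole problem reduces to showing that $\im\big(\bigoplus_{b\in C}\CH_l(X_b) \r \CH_l(X)\big)$ is supported in dimension $\le l+1$.

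Next, using generic smoothness together with the second hypothesis, I would choose a dense open $U \subseteq C$ over which $f$ is smooth and the cycle class map $\CH_l(X_b) \r \mathrm{H}_{2l}(X_b)$ is an isomorphism; its complement $\Sigma := C \setminus U$ is a finite set of closed points. For the bad fibres, the first hypothesis says that each $\CH_l(X_b)$ with $b \in \Sigma$ is finitely generated, hence spanned by finitely many $l$-cycles; letting $W \subseteq f^{-1}(\Sigma)$ be the union of their supports, the map $\CH_l(W) \r \CH_l(f^{-1}(\Sigma))$ is surjective and $\dim W = l$. For the good fibres, I would apply Proposition \ref{hilbert} to the family $f^{-1}(U) \r U$, whose smooth locus is all of $U$ and on which the cycle class isomorphism holds at every closed point: it yields a closed subvariety $T^\circ \subseteq f^{-1}(U)$ of dimension $\le l+1$ supporting $\im\big(\bigoplus_{b \in U}\CH_l(X_b) \r \CH_l(f^{-1}(U))\big)$. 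I then take $T$ to be its closure in $X$, of dimension $\le l+1$.

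The remaining and principal step is to transfer this support statement from $f^{-1}(U)$ back to $X$, which I would carry out through the localisation sequence $\CH_l(f^{-1}(\Sigma)) \r \CH_l(X) \xrightarrow{\,j^*\,} \CH_l(f^{-1}(U)) \r 0$. Given $b \in U$ and $\alpha \in \CH_l(X_b)$, its pushforward $\gamma \in \CH_l(X)$ satisfies $j^*\gamma \in \im(\CH_l(T^\circ) \r \CH_l(f^{-1}(U)))$ by Proposition \ref{hilbert}; lifting a preimage along $\CH_l(T) \r \CH_l(T^\circ)$ and pushing it to $X$ produces a class supported on $T$ whose restriction to $\CH_l(f^{-1}(U))$ equals $j^*\gamma$, so the difference lies in $\ker(j^*) = \im(\CH_l(f^{-1}(\Sigma)) \r \CH_l(X))$, hence on $W$ by the previous paragraph. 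Thus each such $\gamma$ is supported on $T \cup W$, and the entire fibre contribution is supported in dimension $\le l+1$. Taking $Y := \bigcup_j \mathcal{Z}_j \cup H \cup T \cup W$, of dimension $\le l+1$, the map $\CH_l(Y) \r \CH_l(X)$ is surjective, and since $\C$ is a universal domain this is exactly the assertion that $\CH_l(X)$ has niveau $\le 1$ in the sense of Definition \ref{D:niveau}. The hard part is precisely this last transfer: realising the Hilbert-scheme support over $U$ as an honest closed subscheme of $X$ and absorbing the resulting boundary correction into the finitely generated bad fibres, via the compatibility of proper pushforward with the localisation restriction $j^*$.
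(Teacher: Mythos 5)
Your proof is correct and follows essentially the same route as the paper's: a localisation argument over the generic point of $C$ handles $\CH_{l-1}(X_\eta)$ via finitely many $l$-dimensional subschemes $\mathcal{Z}_j$, Proposition \ref{hilbert} handles the fibres over the good open set $U$, and finite generation of $\CH_l(X_c)$ disposes of the finitely many bad fibres. Your only deviations are cosmetic rather than structural: you route the first step through Lemma \ref{localisation-seq2} (picking up a harmless extra linear section $H$), and you spell out explicitly, via a second localisation sequence for $f^{-1}(\Sigma) \subset X$, the transfer of the Hilbert-scheme support from $f^{-1}(U)$ back to $X$ --- a point the paper's brief application of Proposition \ref{hilbert} after shrinking $U$ leaves implicit.
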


\begin{proof} We have the localisation exact sequence $$\bigoplus_{c
    \in C} \CH_l(X_c) \longrightarrow \CH_l(X) \longrightarrow
  \CH_{l-1}(X_{\eta}) \longrightarrow 0.$$ Let $Z_1, \ldots, Z_n$ be
  irreducible closed subschemes of $X_\eta$ of dimension $l-1$ that
  span $\CH_{l-1}(X_\eta)$ and let $\mathcal{Z}_1, \ldots,
  \mathcal{Z}_n$ be closed subschemes of $X$ of dimension $l$ that
  restrict to $Z_1, \ldots, Z_n$ in $X_\eta$. Then by flat pullback
  the class of $\mathcal{Z}_j$ in $\CH_l(X)$ maps to the class of
  $Z_j$ in $\CH_{l-1}(X_\eta)$ so that the composite map
  $\bigoplus_{j=1}^n \CH_l(\mathcal{Z}_j) \r \CH_l(X) \r
  \CH_{l-1}(X_\eta)$ is surjective.

  Let $U \subseteq C$ be a Zariski-open subset of $C$ such that for
  all closed points $c \in U$ the cycle class map $\CH_l({X}_c) \r
  \mathrm{H}_{2l}(X_c)$ is an isomorphism. Up to shrinking $U$, we may
  assume that $f|_U : X|_U \r U$ is smooth. We may then apply
  Proposition \ref{hilbert} to get a closed subscheme $\iota : D
  \hookrightarrow X$ of dimension $l+1$ such that $\iota_*\CH_l(D)
  \supseteq \im \big( \bigoplus_{c \in U} \CH_l(X_c) \r \CH_l(X)
  \big)$.

  As such, we have a surjective map $$\bigoplus_{j=1}^n
  \CH_l(\mathcal{Z}_j) \oplus \bigoplus_{c \in C\backslash U} \CH_l(X_c) \oplus
  \CH_l(D) \longrightarrow \CH_l(X)$$ and it is straightforward to
  conclude.
\end{proof}

The next proposition is a generalisation of Proposition \ref{1} to the
case when the base variety $B$ has dimension greater than $1$.

\begin{proposition} \label{genth} Let $f : X \r B$ be a generically
  smooth, projective morphism defined over $\C$ to a smooth
  quasi-projective variety $B$.  Assume that the singular locus of $f$
  in $B$ is finite and let $U$ be the maximal Zariski-open subset of
  $B$ over which $f$ is smooth.  Assume also that \medskip

  $\bullet$ $\CH_l({X}_b)$ is finitely generated for all closed points
  $b \in B$,

  $\bullet$ $\CH_l({X}_b) \r \mathrm{H}_{2l}(X_b)$ is an isomorphism for all
  closed points $b \in U$,

  $\bullet$ $\CH_{l-i}({X}_{\eta_{D_i}}) = \Q$ for all $i$ such that $0
  < i < d_B$ and all irreducible subvarieties $D_i \subset B$ of
  dimension $i$.

  $\bullet$ $\CH_{l-d_B}({X}_{\eta_{B}})$ is finitely generated. 
  \medskip

  \noindent Then $\CH_l(X)$ has niveau $\leq d_B$.
\end{proposition}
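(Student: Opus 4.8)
The plan is to combine the localisation statement of Lemma~\ref{localisation-seq2} with the relative Hilbert scheme argument of Proposition~\ref{hilbert}, treating the smooth locus $U$ and the finite singular locus $B\setminus U$ separately. Since $k=\C$ is itself a universal domain we may take $\Omega=\C$, so it suffices to produce a closed subscheme $Y\subseteq X$ of dimension $\le l+d_B$ for which $\CH_l(Y)\r\CH_l(X)$ is surjective. We may assume $l+d_B\le d_X$, since otherwise $\dim X<l+d_B$ and $Y=X$ does the job.

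First I would invoke Lemma~\ref{localisation-seq2}: its two hypotheses are exactly the third and fourth bullets of the proposition. Fixing a linear section $H\hookrightarrow X$ of dimension $l+d_B$, that lemma furnishes finitely many closed subschemes $\mathcal{Z}_j\subseteq X$ of dimension $l$ for which the map
\[
\bigoplus_j \CH_l(\mathcal{Z}_j)\ \oplus\ \bigoplus_{b\in B}\CH_l(X_b)\ \oplus\ \CH_l(H)\ \longrightarrow\ \CH_l(X)
\]
is surjective. The contributions of the $\mathcal{Z}_j$ (supported in dimension $l$) and of $H$ (supported in dimension $l+d_B$) are already of the required shape, so the task reduces to showing that $\im\big(\bigoplus_{b\in B}\CH_l(X_b)\r\CH_l(X)\big)$ is supported in dimension $\le l+d_B$.

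Here I would split $B=U\sqcup(B\setminus U)$. The singular locus $B\setminus U$ is finite by hypothesis, and for each of its finitely many points $b$ the group $\CH_l(X_b)$ is finitely generated (first bullet); hence its image in $\CH_l(X)$ is spanned by finitely many classes of $l$-dimensional subvarieties of $X$, so it is supported in dimension $l$. Over the smooth locus, $U$ coincides with the locus $B^\circ$ of Proposition~\ref{hilbert}, and the second bullet supplies the isomorphism $\CH_l(X_b)\xrightarrow{\sim}\mathrm{H}_{2l}(X_b)$ for every $b\in U(\C)$. If $l\le d_X-d_B$, Proposition~\ref{hilbert} then shows that $\im\big(\bigoplus_{b\in U}\CH_l(X_b)\r\CH_l(X)\big)$ is supported on a closed subvariety of dimension $l+d_B$; if instead $l>d_X-d_B$, the smooth fibres have dimension $d_X-d_B<l$ (as $f$ is smooth of relative dimension $d_X-d_B$ over $U$), so $\CH_l(X_b)=0$ for $b\in U$ and this contribution vanishes. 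Taking $Y$ to be the union of $H$, the $\mathcal{Z}_j$, the finitely many $l$-dimensional subvarieties carrying the images of $\CH_l(X_b)$ for $b\in B\setminus U$, and the subvariety produced by Proposition~\ref{hilbert}, we obtain a closed subscheme of dimension $l+d_B$ with $\CH_l(Y)\r\CH_l(X)$ surjective, whence $\CH_l(X)$ has niveau $\le d_B$.

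The genuine content lives in Proposition~\ref{hilbert} — the local constancy of the rank of the span of the fibrewise algebraic classes, established through the Baire-category and local-system argument — which I would treat as a black box. Within the present assembly the only points requiring care are the case distinction $l\le d_X-d_B$ versus $l>d_X-d_B$ that governs whether Proposition~\ref{hilbert} applies, and the observation that finiteness of the singular locus confines the singular-fibre contribution to dimension $l$.
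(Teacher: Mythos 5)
Your proof is correct and follows essentially the same route as the paper's: Lemma \ref{localisation-seq2} gives the surjection from the $\mathcal{Z}_j$, the closed fibres and the linear section $H$; Proposition \ref{hilbert} confines the contribution of the fibres over $U$ to a closed subvariety of dimension $l+d_B$; and the finitely many singular fibres contribute in dimension $l$ by finite generation of their Chow groups. Your explicit handling of the degenerate cases $l+d_B>d_X$ and $l>d_X-d_B$ is a minor (and welcome) tidying of details the paper leaves implicit in its ``straightforward to conclude.''
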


\begin{proof} Let $H \hookrightarrow X$ be a linear section of
  dimension $\geq l+d_B$. The restriction of $f$ to $H$ is surjective.
  Thanks to Lemma \ref{localisation-seq2}, there are finitely many
  closed subschemes $\mathcal{Z}_j$ of $X$ of dimension $l$ such that
  the natural map $\bigoplus_j \CH_l(\mathcal{Z}_j) \oplus
  \bigoplus_{b \in B} \CH_l(X_b) \oplus \CH_l(H) \r \CH_l(X)$ is
  surjective. By Proposition \ref{hilbert}, there exists a closed
  subscheme $\iota : \widetilde{B} \hookrightarrow X$ of dimension
  $d_B+l$ such that the image of the map $\bigoplus_{b \in U}
  \CH_l(X_b) \r \CH_l(X)$ is contained in the image of the map
  $\iota_* : \CH_{l}(\widetilde{B}) \r \CH_l(X)$. Therefore the map
  $$\bigoplus_j \CH_l(\mathcal{Z}_j) \oplus \CH_l(\widetilde{B})
  \oplus \bigoplus_{b \in B\backslash U} \CH_l(X_b) \oplus \CH_l(H)
  \longrightarrow \CH_l(X)$$ is surjective. It is then straightforward
  to conclude.
\end{proof}

\subsection{The main results} The field of complex numbers is a
universal domain and in view of Section \ref{complex} we restate some
of the results above in a more comprehensive way. \medskip

First, we deduce from Proposition \ref{linear-theorem} the following.

\begin{theorem} \label{linear-theorem-complex} Let $f : X \r B$ be a
  complex projective surjective morphism onto a quasi-projective
  variety $B$.  Assume that $\CH_{i}({X}_{b}) = \Q$ for all $i \leq l$
  and all closed point $b \in B$ .  Then
    $\CH_i(X)$ has niveau $\leq d_B$ for all $i \leq l$.
\end{theorem}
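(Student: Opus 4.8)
The plan is to deduce this statement directly from Proposition \ref{linear-theorem}, whose hypotheses are phrased in terms of the fibres over the generic points $\eta_D$ of subvarieties $D\subseteq B$. The only work is to convert the closed-fibre hypothesis of the theorem into the generic-fibre hypothesis demanded by that proposition, and the key tool for this is the fact that the base field is now $\C$, hence a universal domain, so that Lemma \ref{L:universal} applies.

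First I would fix an integer $m$ with $0\leq m\leq l$ and apply Lemma \ref{L:universal} with its parameter set equal to $m$. Since $\C$ is algebraically closed, the closed points of $B$ are exactly its $\C$-points, so the assumption $\CH_m(X_b)=\Q$ for all closed $b\in B$ is precisely the hypothesis of Lemma \ref{L:universal}; the lemma then yields $\CH_m(X_{\eta_D})=\Q$ for every irreducible subvariety $D\subseteq B$. Letting $m$ range over $0\leq m\leq l$, I obtain $\CH_m(X_{\eta_D})=\Q$ for all such $m$ and all irreducible $D\subseteq B$. Next I would fix $i\leq l$ and verify the hypotheses of Proposition \ref{linear-theorem} with its parameter ``$l$'' taken to be $i$: for each $j$ with $0\leq j\leq\min(i,d_B)$ and each irreducible $D_j\subseteq B$ of dimension $j$, the index $i-j$ satisfies $0\leq i-j\leq l$, so the previous step gives $\CH_{i-j}(X_{\eta_{D_j}})=\Q$. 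Proposition \ref{linear-theorem} then applies and shows that $\CH_i(X)$ has niveau $\leq d_B$. As $i\leq l$ was arbitrary, the theorem follows.

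The main obstacle is entirely the passage from the closed-fibre hypothesis to the generic-fibre hypothesis required by Proposition \ref{linear-theorem}: this is exactly where the universal-domain assumption is essential, entering through the very-general-fibre comparison of Lemma \ref{lemma} that underlies Lemma \ref{L:universal}. Beyond that, the only care needed is the index bookkeeping to confirm that one never requires information about $\CH$ in negative degrees. Indeed, for $j>i$ the group $\CH_{i-j}(X_{\eta_{D_j}})$ vanishes by the convention $\CH_{<0}=0$, and this imposes no genuine constraint: the proof of Lemma \ref{localisation-seq} (on which Proposition \ref{linear-theorem} rests) uses these fibre Chow groups only through localisation sequences whose relevant cokernels are already zero in that range. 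Thus the literal range $0\leq j\leq d_B$ appearing in Proposition \ref{linear-theorem} causes no difficulty, and the argument reduces to a clean two-step application.
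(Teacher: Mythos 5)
Your proof is correct and is essentially the paper's own argument: Lemma \ref{L:universal} converts the closed-fibre hypothesis into $\CH_m(X_{\eta_D})=\Q$ for all $m\leq l$ and all irreducible $D\subseteq B$, after which Proposition \ref{linear-theorem} yields the conclusion for each $i\leq l$. Your additional remark on the degenerate indices $j>i$, where $\CH_{i-j}(X_{\eta_{D_j}})=0$ by the convention $\CH_{<0}=0$ and the localisation-sequence steps are vacuous, is careful bookkeeping that the paper leaves implicit, but it does not change the route.
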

\begin{proof}
  By Lemma \ref{L:universal}, $\CH_i(X_{{\eta}_D})=\Q$ for all $i\leq
  l$ and all irreducible subvarieties $D$ of $X$. Proposition
  \ref{linear-theorem} implies that $\CH_i(X)$ has niveau $\leq d_B$.
\end{proof}

The following lemma gives a criterion for the second point in the
assumptions of Propositions \ref{1} and \ref{genth} to be satisfied.

\begin{lemma} \label{Chow-iso}
  Let $X$ be a smooth projective complex variety. Assume that
  $\CH_i(X)$ is finitely generated for all $i \leq l$. Then the cycle
  class map $\CH_l(X) \r \mathrm{H}_{2l}(X)$ is an isomorphism.
\end{lemma}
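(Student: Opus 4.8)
The plan is to derive the isomorphism from a generalized decomposition of the diagonal, the finite-generation hypothesis being precisely a niveau-$0$ condition. Set $d=\dim X$. First I would record that, for each $i$, the group $\CH_i(X)$ is finitely generated if and only if it has niveau $\leq 0$ in the sense of Definition \ref{D:niveau}: a finite generating set, represented by $i$-dimensional subvarieties, is supported on a closed subscheme $W_i\subseteq X$ of dimension $i$ with $\CH_i(W_i)\to\CH_i(X)$ surjective, and conversely such a $W_i$ forces finite-dimensionality. In particular $\CH_0(X)$ finitely generated forces $\CH_0(X)=\Q$ (its degree-zero part is representable, hence a quotient of $\mathrm{Alb}(X)(\C)\otimes\Q$, which finite-dimensionality makes vanish). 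The Bloch--Srinivas technique \cite{BS} already used in Proposition \ref{general-generic} then applies, and, iterated over the successive generic points in the spirit of \cite{Laterveer}, produces a decomposition
\begin{equation*}
  \Delta_X \;=\; \sum_{i=0}^{l}\gamma_i+\gamma' \;\in\; \CH_{d}(X\times X)_\Q,
\end{equation*}
where $\gamma_i$ is supported on $V_i\times W_i$ with $\dim V_i=d-i$ and $\dim W_i=i$, and $\gamma'$ is supported on $V'\times X$ with $\dim V'=d-l-1$.

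Next I would obtain surjectivity by letting this identity act on cohomology. Recall $\mathrm{H}_{2l}(X)=\mathrm{H}^{2(d-l)}(X,\Q)$. A correspondence supported on $V\times W$ sends $\mathrm{H}^{k}(X)$ into the image of the Gysin pushforward from a resolution of $W$, raising degrees by $2\,\codim W$, and its value depends only on the restriction of the class to a resolution of $V$. A degree count then shows that $\gamma_i$ for $i<l$ (source degree $2(i-l)<0$ on $\widetilde{W}_i$) and $\gamma'$ (it factors through $\mathrm{H}^{2(d-l)}$ of a variety of dimension $d-l-1$, which vanishes) act as $0$ on $\mathrm{H}_{2l}(X)$; hence the identity acts as $\gamma_{l*}$, whose image is spanned by fundamental classes of the $l$-dimensional subvarieties of $W_l$. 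Therefore $\mathrm{H}_{2l}(X)$ is generated by algebraic classes and $cl:\CH_l(X)\to\mathrm{H}_{2l}(X)$ is surjective. The very same bookkeeping in the odd degree $2d-2l-1$ shows that every term of the decomposition kills $\mathrm{H}^{2d-2l-1}(X,\Q)$, so that $\mathrm{H}^{2d-2l-1}(X,\Q)=0$.

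Injectivity, that is $\CH_l(X)_{\hom}=0$, is the delicate step, and it is here that finite-generation in all degrees $\leq l$ is essential. The vanishing $\mathrm{H}^{2d-2l-1}(X,\Q)=0$ just obtained means that the intermediate Jacobian receiving the Abel--Jacobi map for $l$-cycles is trivial, so that map vanishes; I would combine this with the finite-dimensionality of $\CH_l(X)$, which leaves no room for a nontrivial divisible (abelian-variety) contribution, to force the algebraically and homologically trivial parts to coincide and then to vanish. A more hands-on alternative is an induction on $d$: from $\CH_0(X)=\Q$ the Bloch--Srinivas decomposition $\Delta_X=X\times\{pt\}+\gamma'$ with $\gamma'$ supported on a divisor $D$, after resolving $\rho:\widetilde{D}\to X$, exhibits $cl$ on $X$ in degree $j\geq1$ as a split sub-map, compatible with a section, of $cl$ on $\widetilde{D}$ in degree $j-1$, so injectivity would descend from the lower-dimensional $\widetilde{D}$. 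The hard part, common to both routes, is constructing the decomposition with the stated supports---the inductive spreading-out over the generic points together with the resolutions of the auxiliary loci---and checking that the niveau-$0$ hypotheses propagate far enough that the final passage from ``trivial Abel--Jacobi target and finite-dimensionality'' to ``$\CH_l(X)_{\hom}=0$'' is justified.
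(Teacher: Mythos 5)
The first half of your argument is sound: finite generation in degrees $\leq l$ is indeed equivalent to a niveau-$0$ condition, the Laterveer-type decomposition $\Delta_X=\sum_{i\leq l}\gamma_i+\gamma'$ with the stated supports does exist, and your degree bookkeeping correctly gives surjectivity of $cl:\CH_l(X)\r \mathrm{H}_{2l}(X)$ as well as $\mathrm{H}^{2d-2l-1}(X,\Q)=0$. (In fact the same decomposition, acting on $\CH_l(X)$ rather than on cohomology, shows more: the terms $\gamma_i$ with $i<l$ die because $\CH_l(W_i)=0$, and $\gamma'$ dies after moving a representative of a class off $V'$, so $\CH_l(X)$ is spanned by the classes of the $l$-dimensional components of $W_l$. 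But even this does not yield injectivity, since nothing guarantees that a homological relation among those finitely many classes lifts to a rational equivalence.) The genuine gap is in the injectivity step, and neither of your two routes closes it. Route one: the vanishing of $\mathrm{H}^{2d-2l-1}(X,\Q)$ kills the \emph{target} of the Abel--Jacobi map, hence its image, but says nothing about its \emph{kernel}; to conclude $\CH_l(X)_{\hom}=0$ you would need a Mumford-type dichotomy (``the Abel--Jacobi kernel is zero or infinite-dimensional'') for $l$-cycles, which is a theorem only for zero-cycles \cite{Mumford} --- where you correctly invoked it to get $\CH_0(X)=\Q$ --- and is open for general $l$; you also have no control of the Griffiths group $\CH_l(X)_{\hom}/\CH_l(X)_{\alg}$. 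Route two is circular: a resolution $\widetilde{D}$ of the divisorial support does \emph{not} inherit the hypothesis, since finite generation of $\CH_i(X)$ says nothing about $\CH_i(\widetilde{D})$ (already for $X=\P^2$ and $D$ a smooth curve of positive genus, $\CH_0(\widetilde{D})$ is infinite-dimensional and $cl$ is not injective on it). The retraction exhibits $\CH_j(X)$ as a direct summand of $\CH_{j-1}(\widetilde{D})$ compatibly with cycle classes, but injectivity of $cl$ on precisely that summand is what you are trying to prove, so no induction on dimension gets off the ground.

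For comparison, the paper's proof supplies the missing input motivically rather than via Abel--Jacobi: finite generation makes $\CH_i(X)_{\alg}$ representable in the sense of \cite[Definition 2.1]{Vial1} for all $i\leq l$, and the argument of \cite[Theorem 3.4]{Vial1} (orthogonal idempotents built from cycles spanning $\mathrm{H}_{2i}(X)$ together with dual classes obtained from powers of the hyperplane class, using the nondegeneracy of the hard Lefschetz pairing on these fully algebraic groups, followed by an effectivity statement in the spirit of Proposition \ref{effective-coro}) produces a decomposition
\begin{equation*}
\h(X)\cong \mathds{1}\oplus\mathds{1}(1)^{b_2}\oplus\cdots\oplus\mathds{1}(l)^{b_{2l}}\oplus N(l+1)
\end{equation*}
with $N$ an \emph{effective} motive. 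Injectivity is then automatic: $\CH_l(N(l+1))=\CH_{-1}(N)=0$ because effective motives have no Chow groups in negative degrees, while the Tate summands contribute exactly $\Q^{b_{2l}}\cong \mathrm{H}_{2l}(X)$. That effectivity of the complement is the precise, unconditional replacement for the Mumford-type dichotomy your sketch would need; without it (or an equivalent device), your proposal proves surjectivity but not the isomorphism.
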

\begin{proof} The proof follows the same pattern as the proof of
  \cite[Theorem 3.4]{Vial1} once it is noted that if $\CH_i(X)$ is
  finitely generated then the group of algebraically trivial cycles
  $\CH_i(X)_\alg$ is representable in the sense of \cite[Definition
  2.1]{Vial1}. Concretely, we get that the Chow motive of $X$ is
  isomorphic to $\mathds{1} \oplus \mathds{1}(1)^{b_2} \oplus \ldots
  \oplus \mathds{1}(l)^{b_{2l}} \oplus N(l+1)$ where $b_i$ is the
  $i$-th Betti number of $X$ and $N$ is an effective motive. This
  yields that the cycle class map $\CH_i(X) \r \mathrm{H}_{2i}(X)$ is
  an isomorphism for all $i \leq l$.
\end{proof}

As a consequence of Proposition \ref{1}, we obtain

\begin{theorem} \label{1-complex} Let $f : X \r C$ be a complex
  generically smooth projective surjective morphism onto a smooth
  complex curve.  Assume that $\CH_i({X}_c)$ is finitely generated for
  all closed points $c \in C$ and all $i\leq l$, then $\CH_i(X)$ has
  niveau $\leq 1$ for all $i\leq l$.
\end{theorem}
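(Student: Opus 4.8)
The plan is to deduce the theorem directly from Proposition \ref{1}, applied separately for each index $i \leq l$. It therefore suffices, for a fixed such $i$, to verify the three hypotheses of Proposition \ref{1} with $l$ replaced by $i$; the conclusion of that proposition then gives that $\CH_i(X)$ has niveau $\leq 1$, which is exactly the assertion for that $i$.

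The first hypothesis, that $\CH_i(X_c)$ be finitely generated for every closed point $c \in C$, is immediate from the hypothesis of the theorem since $i \leq l$. For the second hypothesis I would use generic smoothness of $f$: a general fibre $X_c$ is then smooth and projective, and by assumption $\CH_j(X_c)$ is finitely generated for all $j \leq l$, in particular for all $j \leq i$. Lemma \ref{Chow-iso}, applied to the smooth projective variety $X_c$, then shows that the cycle class map $\CH_i(X_c) \r \mathrm{H}_{2i}(X_c)$ is an isomorphism for such general $c$, which is precisely the second bullet. For the third hypothesis I would invoke Lemma \ref{L:universal} with base $C$: since $\CH_{i-1}(X_c)$ is finitely generated for all closed points $c$ (again because $i-1 \leq l$), the lemma yields that $\CH_{i-1}(X_{\eta_C})$ is finitely generated, where $\eta_C$ is the generic point of $C$.

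Once these three points are in place, Proposition \ref{1} applies to the index $i$ and gives niveau $\leq 1$ for $\CH_i(X)$; letting $i$ range over $0, \ldots, l$ finishes the argument. I do not expect a genuine obstacle here, as the proof is really a bookkeeping reduction. The one point to be careful about is that the finite-generation hypothesis of the theorem covers all the auxiliary indices that the two lemmas require, namely all $j \leq i$ for Lemma \ref{Chow-iso} and the single index $i-1$ for Lemma \ref{L:universal}; both are guaranteed by $i \leq l$. It is also worth noting that the isomorphism demanded in the second bullet is required only at a general fibre, which is exactly what generic smoothness supplies.
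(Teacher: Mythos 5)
Your proposal is correct and follows essentially the same route as the paper: the paper's proof likewise applies Lemma \ref{L:universal} to get finite generation of $\CH_i(X_{\eta_C})$, uses smoothness of the fibres over the smooth locus of $f$ together with Lemma \ref{Chow-iso} to verify the cycle class isomorphism at general points, and then invokes Proposition \ref{1}. Your index-by-index bookkeeping (replacing $l$ by each $i \leq l$) is just a slightly more explicit organisation of the same argument.
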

\begin{proof} Let $D$ be an irreducible component of $C$. Lemma
  \ref{L:universal} shows that $\CH_i(X_{\eta_D})$ is finitely
  generated for all $i\leq l$.
  Let $U \subseteq C$ be the smooth locus of $f$. It is an open dense
  subset of $C$. Then, for $c \in U$, the closed fibre $X_c$ is smooth
  and the groups $\CH_i(X_c)$ are finitely generated for all $i \leq
  l$. By Lemma \ref{Chow-iso} the cycle class maps $\CH_i(X_c) \r
  \mathrm{H}_{2i}(X_c)$ are isomorphisms for all $c\in U$ and all $i
  \leq l$.  Proposition \ref{1} implies that $\CH_i(X)$ has niveau
  $\leq 1$.
\end{proof}

And, as a consequence of Proposition \ref{genth}, we obtain

\begin{theorem} \label{genth-complex} Let $f : X \r B$ be a complex
  projective surjective morphism onto a smooth quasi-projective
  variety $B$.  Assume that the singular locus of $f$ in $B$ is
  finite, and assume also that \medskip

 $\bullet$ $\CH_i(X_b)$ is finitely generated for all closed points $b
  \in B$ and all $i \leq l$,

  $\bullet$ $\CH_i({X}_b)=\Q$ for all but finitely many closed points
  $b \in B$ and all $i < l$.  \medskip

  \noindent Then $\CH_i(X)$ has niveau $\leq d_B$ for all $i\leq l$.
\end{theorem}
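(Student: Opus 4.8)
The plan is to reduce the statement to Proposition \ref{genth}, applied once for each fixed index $i \leq l$ with the integer of that proposition taken to be $i$. Since the singular locus of $f$ in $B$ is finite, $f$ is smooth over the complement $U$ of finitely many closed points, so $f$ is generically smooth and $U$ is the maximal smooth locus required by Proposition \ref{genth}. The first hypothesis of that proposition---finite generation of $\CH_i(X_b)$ at every closed point $b$---is exactly the first bullet of the theorem for the index $i$.

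For the second hypothesis, I would note that for $b \in U$ the fibre $X_b$ is smooth projective and, by the first bullet, $\CH_j(X_b)$ is finitely generated for all $j \leq i$; Lemma \ref{Chow-iso} then gives that $\CH_i(X_b) \r \mathrm{H}_{2i}(X_b)$ is an isomorphism for all $b \in U$. For the fourth hypothesis, that $\CH_{i-d_B}(X_{\eta_B})$ be finitely generated, I would apply Lemma \ref{L:universal} directly: the first bullet supplies finite generation of $\CH_{i-d_B}(X_b)$ at every closed point (the case $i < d_B$ being vacuous, as the group then vanishes), and Lemma \ref{L:universal} transports this to the generic fibre of $B$.

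The heart of the matter is the third hypothesis, namely $\CH_{i-j}(X_{\eta_{D_j}}) = \Q$ for every $0 < j < d_B$ and every irreducible $D_j \subseteq B$ of dimension $j$; the relevant indices satisfy $0 \leq i-j \leq i-1 < l$, while for $i-j<0$ the groups vanish and impose nothing. The obstacle is that the second bullet only furnishes triviality of $\CH_{i-j}$ at all but finitely many closed points, whereas Lemma \ref{L:universal} as stated demands triviality at every closed point. The resolution exploits that $D_j$ has \emph{positive} dimension: applying Lemma \ref{lemma} to $X_{D_j} := X \times_B D_j \r D_j$ produces a very general locus in $D_j(\C)$, a countable intersection of nonempty Zariski opens, over whose points the fibre is isomorphic to the geometric generic fibre $X_{\overline{\eta}_{D_j}}$. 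Because $\C$ is uncountable this locus is nonempty and, $D_j$ being positive-dimensional, it meets the complement of the finite exceptional set; choosing a good very general point $d$ yields $\CH_{i-j}(X_{\overline{\eta}_{D_j}}) \cong \CH_{i-j}(X_d) = \Q$. The norm argument for the injective base-change map $\CH_{i-j}(X_{\eta_{D_j}}) \hookrightarrow \CH_{i-j}(X_{\overline{\eta}_{D_j}})$, exactly as in the proof of Lemma \ref{L:universal}, then forces $\CH_{i-j}(X_{\eta_{D_j}}) = \Q$.

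In short, I would re-run the proof of Lemma \ref{L:universal}, observing that it only ever uses a single good very general closed point of the subvariety in question, and that such a point survives the removal of finitely many bad fibres precisely because $\dim D_j \geq 1$. With the four hypotheses verified, Proposition \ref{genth} gives that $\CH_i(X)$ has niveau $\leq d_B$; letting $i$ range over $0 \leq i \leq l$ finishes the proof. The main obstacle is this single passage from ``all but finitely many closed points'' to ``generic points of positive-dimensional subvarieties'', and it is here that the positivity of $\dim D_j$ and the uncountability of $\C$ are indispensable.
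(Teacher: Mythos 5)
Your proof is correct and takes essentially the same route as the paper: reduce to Proposition \ref{genth}, checking its hypotheses via Lemma \ref{Chow-iso} on the smooth locus of $f$ and via Lemma \ref{L:universal} (in its finitely generated form applied to the generic fibre of $B$, and in its $=\Q$ form for the generic points of positive-dimensional subvarieties $D_j$). Your explicit patch for passing from ``all but finitely many closed points'' to generic points of positive-dimensional $D_j$ --- a very general point of $D_j$, in the sense of Lemma \ref{lemma}, survives the removal of the finite bad set because $\C$ is uncountable and $\dim D_j \geq 1$ --- is precisely what the paper leaves implicit when it invokes Lemma \ref{L:universal} only for subvarieties of positive dimension, so this is a welcome clarification rather than a divergence.
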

\begin{proof}
  Let $D$ be an irreducible subvariety of $X$ of positive dimension.
  By Lemma \ref{L:universal}, we have $\CH_i(X_{{\eta}_D})=\Q$ for all
  $i<l$. If we denote by $U$ the smooth locus of $f$, then, by Lemma
  \ref{Chow-iso}, the cycle class maps $\CH_i(X_b) \r
  \mathrm{H}_{2i}(X_b)$ are isomorphisms for all $b\in U$ and all $i
  \leq l$.  Proposition \ref{genth} implies that $\CH_i(X)$ has niveau
  $\leq d_B$ for all $i \leq l$.
\end{proof}

\section{Applications} \label{examples}

\subsection{Varieties with small Chow groups} \label{generalstatements}

In this section, we review the known results about varieties with Chow
groups having small niveau; see Definition \ref{D:niveau}. Varieties
are defined over an algebraically closed field $k$ of characteristic zero and
$\Omega$ denotes a universal domain over $k$. In that case,
Grothendieck's standard conjectures for a smooth projective variety
$X$ over $k$ reduce to the Lefschetz standard conjecture for $X$; see
Kleiman \cite{Kleiman}.

\begin{theorem} \label{T:niveau} Let $X$ be a smooth projective
  variety of dimension $d$.  Assume that the Chow groups
  $\CH_0(X_\Omega), \ldots, \CH_{l}(X_\Omega)$ have niveau $\leq n$.
  \begin{itemize}
  \item If $n=3$ and $l = \lfloor \frac{d-4}{2} \rfloor$, then $X$
  satisfies the Hodge conjecture.
\item If $n=2$ and $l = \lfloor \frac{d-3}{2} \rfloor$, then $X$
satisfies the Lefschetz standard conjecture.
\item If $n=1$ and $l = \lfloor \frac{d-3}{2} \rfloor$, then $X$ has a
  Murre decomposition.
\item If $n=1$ and $l = \lfloor \frac{d-2}{2}  \rfloor$, then $X$ is
Kimura finite-dimensional.
  \end{itemize}
\end{theorem}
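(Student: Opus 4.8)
The plan is to regard Theorem \ref{T:niveau} as four specialisations of a single mechanism—the generalised decomposition of the diagonal—each matched to a case of the relevant conjecture that is already known. First I would record the master input. Since $\CH_i(X_\Omega)$ has niveau $\leq n$ for all $i \leq l$, the technique of decomposition of the diagonal of Bloch--Srinivas \cite{BS}, in the refined form worked out by Laterveer \cite{Laterveer} and by the author \cite{Vial1}, yields a decomposition in $\CH_d(X \times X)$ of the shape
$$\Delta_X = \Gamma_1 + \Gamma_2,$$
where $\Gamma_1$ is supported on $S \times X$ with $\dim S \leq l+n$ and $\Gamma_2$ is supported on $X \times T$ with $\dim T \leq d-l-1$. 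Transposing gives a dual decomposition, and letting the two act on $\mathrm{H}^\ast(X)$ furnishes the niveau-to-coniveau dictionary of \cite{Vial1}: away from a band of middle degrees centred at $d$ whose width is controlled by $l$, the cohomology of $X$ acquires coniveau bounded in terms of $n$, so that it is cut out by subvarieties of small dimension. The four values $l = \lfloor (d-4)/2 \rfloor, \lfloor (d-3)/2 \rfloor, \lfloor (d-2)/2 \rfloor$ are exactly calibrated so that the residual middle part is small enough to fall within the known range of the conjecture at hand.

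For the two cohomological assertions I would then invoke the matching classical input. For the Lefschetz standard conjecture ($n=2$, $l=\lfloor (d-3)/2\rfloor$), hard Lefschetz reduces the algebraicity of the Lefschetz operator $\Lambda$ to the middle degrees; the decomposition above exhibits $\Lambda$ as induced by an algebraic correspondence in every degree outside the narrow residual band, while in that band the governing subvarieties are of dimension small enough that $\Lambda$ is algebraic for classical reasons, so $B(X)$ holds; see Kleiman \cite{Kleiman} and \cite{Vial1}. For the Hodge conjecture ($n=3$, $l=\lfloor(d-4)/2\rfloor$) the same decomposition reduces an arbitrary Hodge class to a sum of classes supported on subvarieties of small codimension, where they are algebraic by the Lefschetz $(1,1)$-theorem on a resolution; the residual middle classes have coniveau large enough to lie in the favourable, known range of the generalised Hodge conjecture. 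This is the content of \cite{Vial1}.

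For the two assertions with $n=1$ I would pass to motives. When $\CH_i(X_\Omega)$ is representable (niveau $\leq 1$) for all $i \leq l$, \cite{Vial1} upgrades the cohomological splitting to a decomposition of the Chow motive $\h(X)$ into Tate motives $\mathds{1}(i)$ and Tate twists $\h_1(J)(i)$ of motives of curves (the intermediate Jacobians $J$), plus one residual middle summand. With $l=\lfloor(d-3)/2\rfloor$ the associated mutually orthogonal projectors assemble into a Chow--K\"unneth decomposition satisfying Murre's conditions (A), (B) and (D), hence a Murre decomposition in the sense of Definition \ref{D:Murre}; compare \cite{Murre1}. With the larger value $l=\lfloor(d-2)/2\rfloor$ the residual summand is itself of the same controlled type, so that every summand of $\h(X)$ is a Tate twist of a direct factor of the motive of a curve; since these are finite-dimensional by Kimura \cite{Kimura} and finite-dimensionality is stable under direct sums and under passage to summands, $X$ is Kimura finite-dimensional.

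I expect the genuine difficulty to be concentrated at the middle degree. Outside degree $d$ the argument is formal once the decomposition of the diagonal is in hand, but crossing the middle—verifying that the residual summand has precisely the dimension and coniveau demanded by each conjecture's known range, and that the idempotents produced are self-dual and mutually orthogonal so that they genuinely define a Chow--K\"unneth decomposition (in the spirit of Proposition \ref{P:selfdualidempotent})—is where the exact bounds $\lfloor (d-c)/2 \rfloor$ are forced and where the argument leans most heavily on \cite{Vial1}.
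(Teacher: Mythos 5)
Your overall architecture --- decompose the diagonal, then quote the known cases of each conjecture --- matches the paper's in spirit, and for the first, third and fourth items the paper itself only cites the literature (\cite{Laterveer} for the Hodge conjecture, \cite{Vial3} for Kimura finite-dimensionality and the existence of a Chow--K\"unneth decomposition, \cite[\S 4.4.2]{Vial2} for Murre's (B), (C), (D)); your sketches there have roughly the right shape, though your attributions to \cite{Vial1} are misplaced, and your appeal to ``the known range of the generalised Hodge conjecture'' is not a theorem you can invoke. The genuine gap is in the second item --- the only one the paper actually proves --- and it sits exactly in your ``master input''. A two-term decomposition $\Delta_X = \Gamma_1 + \Gamma_2$ with $\Gamma_1$ supported on $S \times X$, $\dim S \leq l+n$, is too coarse: with $n=2$ and $l = \lfloor \frac{d-3}{2} \rfloor$ one has $\dim S$ of order $\frac{d}{2}+2$, so the action of $\Gamma_1$ on $\mathrm{H}^k(X)$ factors only through the cohomology of a variety of roughly half the dimension of $X$, where no classical case of the Lefschetz standard conjecture is available. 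The paper instead uses Laterveer's multi-term decomposition \cite[1.7]{Laterveer}, $\Delta_X = \Gamma_0 + \cdots + \Gamma_{\lfloor \frac{d-3}{2} \rfloor} + \Gamma'$, in which each $\Gamma_j$ lies in the image of $\CH_d(V_j \times W^j)$ with \emph{both} $\dim V_j \leq j+2$ and $\dim W^j \leq d-j$. This two-sided support control is the whole point: the action of $\Gamma_j$ on $\mathrm{H}^k(X)$ factors through $\mathrm{H}_{2d-k}(\widetilde{W}^j) = \mathrm{H}^{k-2j}(\widetilde{W}^j)$, hence through the $\mathrm{H}_0$ or $\mathrm{H}_1$ of a smooth projective variety when $k \leq 2j+1$, and through $\mathrm{H}^k(\widetilde{V}_j) = \mathrm{H}_{4+2j-k}(\widetilde{V}_j)$, hence through $\mathrm{H}_0$, $\mathrm{H}_1$ or $\mathrm{H}_2$ when $k \geq 2j+2$; so in every degree $k < d$ each piece lands in the $\mathrm{H}_{\leq 2}$ of a smooth projective variety, and, after the Lefschetz hyperplane theorem, of points, curves and surfaces. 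Grouping the terms into two as you do destroys precisely this information.

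Consequently your strategy for the residual middle band cannot work, and the paper is explicit about why. You claim that in that band the ``governing subvarieties are of dimension small enough that $\Lambda$ is algebraic for classical reasons''; but in middle degree the remaining piece $\Gamma'$ acts through $\mathrm{H}_{d}(\widetilde{W}^{\lfloor \frac{d-1}{2} \rfloor}) \cong \mathrm{H}^2$ of a variety of dimension about $d/2$, and the remark following Theorem \ref{T:niveau} stresses that it is not known whether the $\mathrm{H}^2$ of an arbitrary smooth projective variety is motivated by the $\mathrm{H}^2$ of a surface --- this is exactly why Arapura's method, and any ``classical'' argument in the band, is unavailable. The paper never crosses the middle degree at all: having shown that $\mathrm{H}^k(X)$ is generated algebraically, for all $k < d$, by the $\mathrm{H}_0$ of points, the $\mathrm{H}_1$ of curves and the $\mathrm{H}_2$ of surfaces, it concludes directly by \cite[Proposition 3.19]{Vial4}, which yields the Lefschetz standard conjecture from that sub-middle information alone. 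To repair your proof, replace the two-term decomposition by Laterveer's, run the degree-by-degree factorisation above, and finish with that proposition instead of a direct attack on $\mathrm{H}^d(X)$.
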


\begin{proof}
  The fourth item is proved in \cite{Vial3}. It is also proved there
  that if $X$ is as in the third item, then $X$ has a Chow--K\"unneth
  decomposition. That such a decomposition satisfies Murre's
  conjectures (B), (C) and (D) is proved in \cite[\S
  4.4.2]{Vial2}. The first item is proved in \cite{Laterveer}. We
  couldn't find a reference for the proof of the second item so we
  include a proof here.

  Since it is enough to prove the conclusion of the theorem for
  $X_\Omega$, we may assume that $X$ is defined over $\Omega$. 
  Laterveer used the assumptions on the niveau of the Chow groups to
  show \cite[1.7]{Laterveer} that the diagonal $\Delta_X$ admits a
  decomposition as follows : there exist closed and reduced subschemes
  $V_j, W^j \subset X$ with $\dim V_j \leq j+2$ and $\dim W^j \leq
  n-j$, there exist correspondences $\Gamma_j \in \CH_n(X \times X)$
  for $0 \leq j \leq \lfloor \frac{n-3}{2} \rfloor$ and $\Gamma'\in
  \CH_n(X \times X)$ such that each $\Gamma_j$ is in the image of the
  pushforward map $\CH_n(V_j \times W^j)$, $\Gamma'$ is in the image
  of the pushforward map $\CH_n(X \times W^{\lfloor \frac{n-1}{2}
    \rfloor })$, and
  $$\Delta_X = \Gamma_0 + \ldots + \Gamma_{\lfloor \frac{n-3}{2}
    \rfloor } + \Gamma'.$$ Given $j$ such that $0 \leq j \leq \lfloor
  \frac{n-3}{2} \rfloor$, let $\widetilde{V}_j$ and $\widetilde{W}^j$
  denote desingularisations of ${V}_j$ and ${W}^j$ respectively. The
  action of $\Gamma_j$ on $\mathrm{H}^k(X)$ then factors through
  $\mathrm{H}^k(\widetilde{V}_j)$ and through
  $\mathrm{H}_{2n-k}(\widetilde{W}^j)$. On the one hand, we have
  $\mathrm{H}_{2n-k}(\widetilde{W}^j) =
  \mathrm{H}^{k-2j}(\widetilde{W}^j)$ and hence if $k \leq 2j+1$ then
  the action of $\Gamma_j$ on $\mathrm{H}^k(X)$ factors through the
  $\mathrm{H}^0$ or the $\mathrm{H}^1$ of a smooth projective
  variety. Since the Lefschetz standard conjecture is true in degrees
  $\leq 1$, it follows that the action of $\Gamma_j$ on
  $\mathrm{H}^k(X)$ factors through the $\mathrm{H}_0$ or the
  $\mathrm{H}_1$ of a smooth projective variety. On the other hand, we
  have $\mathrm{H}^{k}(\widetilde{V}_j) =
  \mathrm{H}_{4+2j-k}(\widetilde{V}_j)$ and hence if $k \geq 2j+2$
  then $\Gamma_j$ factors through the $\mathrm{H}_0$, the
  $\mathrm{H}_1$ or the $\mathrm{H}_2$ of a smooth projective
  variety. Concerning the action of $\Gamma'$ on $\mathrm{H}^k(X)$, it
  factors through $\mathrm{H}_{2n-k}(\widetilde{W}^{\lfloor
    \frac{n-1}{2} \rfloor})$ which vanishes for dimension reasons if
  $k < n$ when $n$ is odd and if $k<n-1$ when $n$ is even. When $n$ is
  even and $k=n-1$, the action of $\Gamma'$ on $\mathrm{H}^k(X)$
  factors through the $\mathrm{H}_1$ of a curve.  Indeed this follows
  from a combination of the fact that it factors through
  $\mathrm{H}_{n+1}(\widetilde{W}^{\lfloor \frac{n-1}{2} \rfloor}) =
  \mathrm{H}^1(\widetilde{W}^{ \frac{n-2}{2}})$ and of the validity of
  the Lefschetz standard conjecture in degree $1$.

  By the Lefschetz hyperplane theorem, we get that for $k < n$ the
  cohomology groups $\mathrm{H}^k(X)$ are generated algebraically
  (that is through the action of correspondences) by the
  $\mathrm{H}_0$ of points, the $\mathrm{H}_1$ of curves and the
  $\mathrm{H}_2$ of surfaces. We may then conclude with
  \cite[Proposition 3.19]{Vial4}.
\end{proof}

\begin{remark}
  Note that the second point of Theorem \ref{T:niveau} does not seem
  to follow directly from the method of Arapura \cite{Arapura}, since
  under our assumptions, it is not clear that the middle cohomology
  group of $X$ is motivated by the cohomology of surfaces. Indeed, as
  can be read from the proof of the theorem, when $n$ is even, the
  action of $\Gamma'$ on $\mathrm{H}^n(X)$ factors through
  $\mathrm{H}_{n}(\widetilde{W}^{\lfloor \frac{n-1}{2} \rfloor}) =
  \mathrm{H}^2(\widetilde{W}^{ \frac{n-2}{2}})$.  Although expected by
  the Lefschetz standard conjecture, it is not known if the
  $\mathrm{H}^2$ of a smooth projective variety is motivated by the
  $\mathrm{H}^2$ of a surface.
\end{remark}

\subsection{Varieties fibred by low-degree complete intersections}

As explained by Esnault--Levine--Viehweg in the introduction of
\cite{ELV}, it is expected from general conjectures on algebraic
cycles, that if $Y \subset \P_k^n$ is a complete intersection of
multidegree $d_1 \geq \ldots \geq d_r \geq 2$, then $\CH_l(Y)=\Q$ for
all $l < \lfloor \frac{n-\sum_{i=2}^r d_i}{d_1} \rfloor$, see also
Paranjape \cite{Paranjape} and Schoen \cite{Schoen}. If there is no
proof of the above for the moment, the following theorem however was
proved.

\begin{theorem}[Esnault--Levine--Viehweg \cite{ELV}] \label{ELV} Let
  $Y \subset \P_k^n$ be a complete intersection of multidegree $d_1
  \geq \ldots \geq d_r \geq 2$.
  \begin{itemize}
  \item If either $d_1 \geq 3$ or $r \geq l+1$, assume that
    $\sum_{i=1}^r \left( \begin{array}{c}
        l+d_i  \\
        l+1 \end{array} \right) \leq n$.
\item If $d_1 = \ldots = d_r =2$ and $r \leq l$, assume that
  $\sum_{i=1}^r \left( \begin{array}{c}
      l+d_i  \\
      l+1 \end{array} \right) = r(l+2) \leq n-l+r-1.$
  \end{itemize}
  Then $\CH_{l'}(Y) = \Q$ for all $0 \leq l' \leq l$. \qed
\end{theorem}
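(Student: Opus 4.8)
The plan is to prove the apparently stronger but cleaner assertion that for each $l'\le l$ the degree map $\CH_{l'}(Y)\to\Q$, $\alpha\mapsto\deg(h^{l'}\cdot\alpha)$, is an isomorphism; equivalently, that every $l'$-dimensional cycle on $Y$ is rationally equivalent to a rational multiple of a linear section $h^{\dim Y-l'}\cdot[Y]$. The first thing I would record is that the case $l=0$ is exactly Roitman's theorem: the hypothesis $\sum_i\binom{l+d_i}{l+1}\le n$ specialises at $l=0$ to $\sum_i d_i\le n$, which is precisely the condition under which $\CH_0(Y)=\Q$. This serves both as the base case of an induction on $l$ and as the guiding analogy, the general statement being the natural $l$-cycle version of the fact that a complete intersection of small degree has trivial Chow group of zero-cycles.

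The geometric engine I would use is that low multidegree forces $Y$ to be richly swept out by the linear subspaces it contains. Concretely, I would introduce the Fano scheme $F$ of linear $(l+1)$-planes $\Lambda\cong\P^{l+1}$ contained in $Y$, together with the incidence variety $\{(y,\Lambda):y\in\Lambda\in F\}$ and its two projections to $Y$ and to $F$, the latter being a $\P^{l+1}$-bundle. A dimension count on the Grassmannian $\GG(l+1,n)$ shows that the numerical hypothesis is exactly what is needed to force the projection to $Y$ to be dominant, and (after combining with Lemma \ref{L:dominant}) surjective, with positive-dimensional fibres. The significance of working inside $(l+1)$-planes is that $\CH_{l'}(\P^{l+1})=\Q$ for all $l'\le l$ and that any two linear $\P^{l'}$'s inside a fixed $\P^{l+1}$ are rationally equivalent; thus, once an $l'$-cycle has been moved into a union of such linear spaces belonging to a connected family, its class is pinned down as a multiple of the linear section.

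The passage from ``swept out by linear spaces'' to an actual rational equivalence on $Y$ is where I would run the induction. Intersecting $Y$ with a general hyperplane $H$ gives a complete intersection $Y'=Y\cap H\subset\P^{n-1}$ of the same multidegree, and a short computation with Pascal's identity shows that the hypothesis is inherited with room to spare, now at level $l-1$; the localisation sequence $\CH_{l'}(Y')\to\CH_{l'}(Y)\to\CH_{l'}(Y\setminus Y')\to 0$ then feeds the inductive hypothesis into the pieces of lower cycle dimension, while the sweeping construction above disposes of the part of $\CH_{l'}(Y)$ that dominates the base of the family $F$. Throughout I would work over the universal domain $\Omega$, and wherever smoothness of $F$ or generic smoothness is needed I would first prove the statement for the generic complete intersection of the given multidegree and then transport it to $Y$ by the specialisation homomorphism for Chow groups.

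The step I expect to be the main obstacle is making the sweeping rigorous: the Fano scheme $F$ may be reducible or singular, the $l'$-cycles being transported may degenerate as $\Lambda$ varies, and one has to guarantee that the fibrewise rational equivalences in the $\P^{l+1}$-bundle glue to a genuine rational equivalence on $Y$ (rather than merely a homological one). A second and genuinely separate difficulty is the pure-quadric case $d_1=\cdots=d_r=2$ with $r\le l$, where the Fano schemes are anomalously large and the crude dimension count is not sharp; this is exactly why the hypothesis must there be sharpened from $r(l+2)\le n$ to $r(l+2)\le n-l+r-1$, a bound I would obtain by analysing the base loci of the quadratic forms directly rather than by the generic count. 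The complete execution of both points is carried out in Esnault--Levine--Viehweg \cite{ELV}.
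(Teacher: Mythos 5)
Your proposal cannot be checked against an argument in the paper, because the paper gives none: Theorem \ref{ELV} is stated with attribution to \cite{ELV} and imported wholesale, so the only thing to assess is your outline --- and its central mechanism is miscalibrated. The claim that ``a dimension count on the Grassmannian shows that the numerical hypothesis is exactly what is needed to force the projection to $Y$ to be dominant'' is false, and it fails \emph{inside} the hypotheses of the theorem. Two checkable counterexamples: (a) a smooth quadric threefold $Q \subset \P^4$ (here $r=1$, $d_1=2$, $l=1$, and the second bullet reads $3 \le n-1$, satisfied for $n=4$) contains no $\P^2$ whatsoever, since a smooth quadric of dimension $m$ contains linear subspaces of dimension at most $\lfloor m/2 \rfloor$; so your Fano scheme $F$ of $(l+1)$-planes is empty and there is nothing to sweep with, yet the theorem correctly asserts $\CH_1(Q)=\Q$. (b) A smooth cubic surface $S \subset \P^3$ (here $l=0$, $d_1 = 3 \le n = 3$) contains exactly $27$ lines, so the incidence variety of $(l+1)$-planes has dimension $1$ and cannot dominate $S$; again the conclusion $\CH_0(S)=\Q$ holds, but not by your mechanism. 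The honest expected-dimension count confirms this: $(l+1)$-planes through a fixed point of $\P^n$ form a family of dimension $(l+1)(n-l-1)$, while containment in $Y$ imposes $\sum_i \bigl( \binom{d_i+l+1}{l+1} - 1 \bigr)$ conditions, and the theorem's hypothesis $\sum_i \binom{l+d_i}{l+1} \le n$ is far weaker than the inequality dominance would require (for the cubic surface: the hypothesis is $3 \le 3$, while dominance of the line incidence would need $3 \le 2$). Note that $\binom{l+d_i}{l+1} = \binom{(d_i-1)+(l+1)}{l+1}$ is the number of degree-$(d_i-1)$ forms on a $\P^{l+1}$, i.e.\ the condition count for a \emph{one-step extension} of an $l$-plane already contained in a degree-$d_i$ hypersurface to an $(l+1)$-plane containing it; this signals that the argument of \cite{ELV} is an induction along chains of linear subspaces combined with degeneration and specialisation of the complete intersection itself, not a sweeping of the fixed smooth $Y$ by $(l+1)$-planes --- indeed example (a) shows even the one-step extension can be globally obstructed on a smooth $Y$ within the hypotheses.

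This gap is fatal to the architecture rather than local to one step. In your induction via hyperplane sections and localisation, the sweeping construction is precisely what was supposed to dispose of the part of $\CH_{l'}(Y)$ that survives restriction to the generic open set, and the counterexamples above occur at the terminal level $l'=l$ on the variety itself, so inheritance of the hypothesis at lower levels cannot repair it (your base case via Roitman at $l=0$, where $\binom{d_i}{1}=d_i$, is the one part that is correctly identified). Beyond this, you explicitly delegate the two difficulties you name --- gluing the fibrewise rational equivalences and the pure-quadric regime --- to \cite{ELV}, so even on its own terms the proposal is an annotated citation rather than a proof; that is also what the paper does, but without committing to an incorrect dimension count along the way.
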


Let us consider $f : X \r B$ a dominant morphism between smooth
projective complex varieties whose closed fibres are complete
intersections.  Theorem \ref{linear-theorem-complex}, together with
Theorem \ref{ELV}, shows that the niveau of the first Chow groups of
$X$ have niveau $\leq \dim B$. When $X$ is fibred by very low-degree
complete intersections, we can thus expect $X$ to satisfy the
assumptions of Theorem \ref{T:niveau}.  In the remainder of this
paragraph, we inspect various such cases.

\subsubsection{Varieties fibred by quadric hypersurfaces.}
Let $Q \subset \P^n$ be a quadric hypersurface. Then $\CH_l(Q)=\Q$ for
all $l < \frac{\dim Q}{2}$.

\begin{proposition} \label{quadrics1} Let $f : X \r B$ be a dominant
  morphism between smooth projective complex varieties whose closed
  fibres are quadric hypersurfaces.
  \begin{itemize}
  \item If $\dim B \leq 1$, then $X$ is Kimura finite-dimensional and
    satisfies Murre's conjectures.
    \item  If $\dim B \leq 2$, then $X$ satisfies
    Grothendieck's standard conjectures.
  \item If $\dim B \leq 3$, then $X$ satisfies the Hodge conjecture.
  \end{itemize}
\end{proposition}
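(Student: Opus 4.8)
The plan is to deduce the three cases from the niveau criteria collected in Theorem \ref{T:niveau}, the required niveau bounds being supplied by Theorem \ref{linear-theorem-complex}. Write $q := d_X - d_B$ for the dimension of the fibres. Since $f$ is a dominant projective morphism between projective varieties it is surjective, and every closed fibre $X_b$ is a quadric hypersurface of dimension $q$. By the fact recalled just above, $\CH_i(X_b) = \Q$ for all $i < q/2$, i.e.\ for all $i \leq \lfloor (q-1)/2 \rfloor$; this holds at \emph{every} closed point $b$, degenerate fibres included, so the hypothesis of Theorem \ref{linear-theorem-complex} is satisfied in that range.

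First I would invoke Theorem \ref{linear-theorem-complex} with $l = \lfloor (q-1)/2 \rfloor$ to obtain that $\CH_i(X)$ has niveau $\leq d_B$ for all $i \leq \lfloor (d_X - d_B - 1)/2 \rfloor$. It then remains to match this estimate against Theorem \ref{T:niveau}, taken with $n = d_B$ and $d = d_X$. When $d_B = 1$ the threshold equals $\lfloor (d-2)/2 \rfloor$, which is exactly the one required in the fourth (Kimura) item and exceeds the threshold $\lfloor (d-3)/2 \rfloor$ of the third (Murre) item; when $d_B = 2$ it equals $\lfloor (d-3)/2 \rfloor$, matching the second (Lefschetz) item; and when $d_B = 3$ it equals $\lfloor (d-4)/2 \rfloor$, matching the first (Hodge) item. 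For a strictly smaller base dimension the threshold $\lfloor (d_X - d_B - 1)/2 \rfloor$ only grows while $d_B$ shrinks, so by monotonicity of niveau the relevant hypotheses of Theorem \ref{T:niveau} are still met; this is what accounts for the inequalities $\dim B \leq 1, 2, 3$.

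I would then read off the conclusions. For $\dim B \leq 1$, the fourth item of Theorem \ref{T:niveau} yields Kimura finite-dimensionality and the third yields a Chow--K\"unneth decomposition satisfying Murre's conjectures (B), (C) and (D), hence all of Murre's conjectures. For $\dim B \leq 2$, the second item yields the Lefschetz standard conjecture, which (as recalled at the beginning of Section \ref{examples}, after Kleiman) is equivalent to the full set of Grothendieck's standard conjectures over an algebraically closed field of characteristic zero. For $\dim B \leq 3$, the first item yields the Hodge conjecture.

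Since the argument is in essence a dimension count, the only genuinely substantive point --- the mild ``hard part'' --- is to make sure the Chow-triviality $\CH_i(X_b) = \Q$ for $i < q/2$ is available at \emph{all} closed points, singular fibres included, as Theorem \ref{linear-theorem-complex} demands the hypothesis fibrewise and not merely generically; this holds because a possibly degenerate quadric hypersurface of dimension $q$ still has its Chow groups below the middle dimension generated by linear sections. One should also check that the fibres are equidimensional of dimension $q$, so that a single value of $q$ feeds the floor computations uniformly.
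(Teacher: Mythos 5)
Your proof is correct and takes essentially the same route as the paper's: the fibre-dimension bound gives $\CH_i(X_b)=\Q$ for all $i\leq \lfloor (d_X-d_B-1)/2\rfloor$ at every closed point (degenerate quadrics included), Theorem \ref{linear-theorem-complex} then yields niveau $\leq d_B$ for those Chow groups, and the floor-function matching against the four items of Theorem \ref{T:niveau} is exactly the paper's argument. Your additional remarks on singular fibres and on monotonicity in $d_B$ merely make explicit what the paper's three-line proof leaves implicit.
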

\begin{proof} The fibers of $f$ have dimension $\geq \dim X - \dim B$,
  so that $X$ satisfies the assumptions of Theorem
  \ref{linear-theorem-complex} with $l=\lfloor \frac{d_X-d_B - 1}{2}
  \rfloor$. Thus the Chow groups $\CH_0(X)$, $\CH_1(X)$, $\ldots,
  \CH_{\lfloor \frac{d_X-d_B-1}{2} \rfloor}(X)$ have niveau $\leq
  d_B$. We can therefore conclude by Theorem \ref{T:niveau}.
\end{proof}

\subsubsection{Varieties fibred by cubic hypersurfaces.}

Let $X \subset \P^n$ be a cubic hypersurface. Then 
\begin{itemize}
\item $\CH_0(X)=\Q$ for $\dim X \geq 2$.
\item $\CH_1(X)=\Q$ for $\dim X \geq 5$.
\item $\CH_2(X)=\Q$ for $\dim X \geq 8$.
\end{itemize}
Note that Theorem \ref{ELV} only gives $\CH_2(X)=\Q$ for $\dim X \geq
9$. The bound on the dimension of $X$ was improved to $\dim X = 8$ by
Otwinowska \cite{Otwinowska}.

\begin{proposition} \label{cubics} Let $f: X \r B$ be a dominant morphism
  between smooth projective complex varieties whose closed fibres are
  cubic hypersurfaces.
  \begin{itemize}
  \item If $\dim X = 6$ and $\dim B = 1$, then $X$ satisfies
    Grothendieck's standard conjectures and has a Murre decomposition.
  \item If $\dim X = 7$ and if $\dim B \leq 2$, then $X$ satisfies
  the Hodge conjecture.
  \item If $\dim X = 9$ and if $\dim B \leq 1$, then $X$ satisfies
  the Hodge conjecture.
  \end{itemize}
 \end{proposition}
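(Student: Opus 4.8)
The plan is to follow the proof of Proposition \ref{quadrics1} verbatim, replacing the triviality statements for quadrics by those for cubics recorded just above: I would first convert the vanishing of the low Chow groups of cubic hypersurfaces into a niveau bound on the low Chow groups of $X$ by means of Theorem \ref{linear-theorem-complex}, and then read off each of the three conclusions from the appropriate item of Theorem \ref{T:niveau}.

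First I would use that every fibre of $f$ is a cubic hypersurface of dimension at least $d_X - d_B$, and invoke the list above to produce, in each case, an integer $l$ with $\CH_i(X_b) = \Q$ for all closed points $b \in B$ and all $i \leq l$ (a larger fibre dimension only makes more Chow groups trivial, so the minimal fibre dimension $d_X - d_B$ governs $l$). When $d_X = 6$ and $d_B = 1$ the fibres have dimension $5$, so $\CH_0(X_b) = \CH_1(X_b) = \Q$ and $l = 1$; when $d_X = 7$ and $d_B \leq 2$ the fibres have dimension $\geq 5$, so again $l = 1$; and when $d_X = 9$ and $d_B \leq 1$ the fibres have dimension $\geq 8$, so $\CH_0(X_b) = \CH_1(X_b) = \CH_2(X_b) = \Q$ and $l = 2$. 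In each case Theorem \ref{linear-theorem-complex} then yields that $\CH_i(X)$ has niveau $\leq d_B$ for all $i \leq l$.

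It then remains to feed these niveau bounds into Theorem \ref{T:niveau}, checking that $l$ reaches the required threshold and that $d_B \leq n$, so that niveau $\leq d_B$ gives niveau $\leq n$. For $d_X = 6$ the bound $l = 1 = \lfloor \frac{6-3}{2} \rfloor$ meets the Murre item with $n = 1$, giving a Murre decomposition, and also the Lefschetz item with $n = 2$; since over an algebraically closed field of characteristic zero the standard conjectures reduce to the Lefschetz standard conjecture, this yields Grothendieck's standard conjectures. For $d_X = 7$ the bound $l = 1 = \lfloor \frac{7-4}{2} \rfloor$ meets the Hodge item with $n = 3$, and for $d_X = 9$ the bound $l = 2 = \lfloor \frac{9-4}{2} \rfloor$ again meets the Hodge item with $n = 3$; in both cases $X$ satisfies the Hodge conjecture.

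The one delicate point, which I regard as the main obstacle, is the hypothesis of Theorem \ref{linear-theorem-complex} that $\CH_i(X_b) = \Q$ for \emph{every} closed point $b$, including those over which the fibre is a singular cubic, whereas the quoted triviality results are most naturally stated for smooth cubic hypersurfaces. For $\CH_0$ the triviality persists for any cubic hypersurface of dimension $\geq 2$ (every point is rationally equivalent to a fixed one by sliding along a line of the hypersurface through a general point), but for $\CH_1$ and $\CH_2$ one must verify that the cited triviality extends to all the singular cubic fibres of $f$. This is where the genuine content lies, and it is the step I would scrutinise most carefully before declaring the argument complete.
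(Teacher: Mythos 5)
Your argument is exactly the paper's proof: Theorem \ref{linear-theorem-complex} converts the triviality of the low Chow groups of the cubic fibres into niveau bounds $\leq d_B$ on $\CH_0(X),\CH_1(X)$ (resp.\ also $\CH_2(X)$ in the third case), and Theorem \ref{T:niveau} then yields the three conclusions with the same values of $l$ and $n$ you computed. The delicate point you flag at the end is not in fact an obstacle: Theorem \ref{ELV} of Esnault--Levine--Viehweg and Otwinowska's improvement for $\CH_2$ of cubics are stated for \emph{arbitrary} (possibly singular) complete intersections, with no smoothness hypothesis, so $\CH_i(X_b)=\Q$ holds for every closed fibre, including the degenerate ones.
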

\begin{proof}
  We use Theorem \ref{linear-theorem-complex} as in the proof of
  Proposition \ref{quadrics1}. In the first case, we get that $\CH_0(X)$
  and $\CH_1(X)$ have niveau $\leq 1$. In the second case we get that
  $\CH_0(X)$ and $\CH_1(X)$ have niveau $\leq 2$ and in the third case
  we get that $\CH_0(X)$, $\CH_1(X)$ and $\CH_2(X)$ have niveau $\leq
  1$.  We can then conclude in all three cases by Theorem
  \ref{T:niveau}.
\end{proof}

\subsubsection{Varieties fibred by complete intersections of bidegree
  $(2,2)$.}

Let $X \subset \P^n$ be the complete intersection of two quadrics. By
Theorem \ref{ELV}, $\CH_0(X)=\Q$; and if $\dim X \geq 4$, then
$\CH_1(X)=\Q$.

\begin{proposition} \label{22} Let $f : X \r B$ be a dominant morphism
  between smooth projective complex varieties whose closed fibres are
  complete intersections of bidegree $(2,2)$.
  \begin{itemize}
  \item If $\dim B \leq 1$ and $\dim X \leq 5$, then $X$ is Kimura
    finite-dimensional.
  \item If $\dim B \leq 1$ and $\dim X \leq 6$, then $X$ satisfies
    Murre's conjectures.
  \item If $\dim B \leq 2$ and $\dim X \leq 6$, then $X$ satisfies
    Grothendieck's standard conjectures.
  \item If $\dim B \leq 3$ and $\dim X \leq 7$, then $X$ satisfies the
    Hodge conjecture.
  \end{itemize}
\end{proposition}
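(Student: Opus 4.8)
The plan is to mirror the proofs of Propositions \ref{quadrics1} and \ref{cubics}: I would first read off the vanishing of the low Chow groups of a fibre from Theorem \ref{ELV}, propagate it to $X$ by means of Theorem \ref{linear-theorem-complex}, and finally feed the resulting niveau bound into Theorem \ref{T:niveau}.

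First I would set up the fibres. Writing $d_X = \dim X$ and $d_B = \dim B$, a closed fibre $F = X_b$ is a complete intersection of bidegree $(2,2)$, hence sits in $\PP^{d_X - d_B + 2}$ and has dimension $d_X - d_B$. Theorem \ref{ELV} with $d_1 = d_2 = 2$ gives $\CH_0(F) = \Q$ once $\dim F \geq 2$ and, crucially, $\CH_0(F) = \CH_1(F) = \Q$ once $\dim F \geq 4$. In the range $d_X - d_B \geq 4$ I therefore obtain $\CH_i(X_b) = \Q$ for all $i \leq 1$ and all closed points $b \in B$, and Theorem \ref{linear-theorem-complex} then shows that both $\CH_0(X)$ and $\CH_1(X)$ have niveau $\leq d_B$.

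The last step is to match this input with Theorem \ref{T:niveau}. Since a niveau bound by $d_B$ is \emph{a fortiori} a bound by any $n \geq d_B$, and each bullet keeps $d_B$ below the relevant $n$ (namely $n=1$ for finite-dimensionality and for a Murre decomposition, $n = 2$ for the Lefschetz standard conjecture, $n = 3$ for the Hodge conjecture), it suffices that the index demanded by Theorem \ref{T:niveau} be at most $1$. This is exactly what the upper bounds on $\dim X$ encode: $\lfloor \tfrac{d_X - 2}{2}\rfloor \le 1 \Leftrightarrow d_X \le 5$ for finite-dimensionality, $\lfloor \tfrac{d_X-3}{2}\rfloor \le 1 \Leftrightarrow d_X \le 6$ for a Murre decomposition and for the standard conjectures, and $\lfloor \tfrac{d_X-4}{2}\rfloor \le 1 \Leftrightarrow d_X \le 7$ for the Hodge conjecture. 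Feeding $l = 1$ together with the niveau bound into Theorem \ref{T:niveau} then yields the four conclusions.

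I expect the only real obstacle to be the numerical bookkeeping linking the fibre dimension to the ELV thresholds. The argument as sketched genuinely needs $\CH_1(F) = \Q$, and Theorem \ref{ELV} grants this only when $\dim F = d_X - d_B \geq 4$; this is tight precisely in the extremal configurations ($d_X - d_B = 4$ in bullets 1, 3, 4 and $d_X - d_B = 5$ in bullet 2). For the smaller values of $d_X$ allowed by each bullet one must check that the weaker vanishing $\CH_0(F) = \Q$ (available once $\dim F \geq 2$), together with the correspondingly smaller required index, still suffices, and treat separately any configuration in which $\dim F$ drops below the range where Theorem \ref{ELV} applies. Everything else is the same routine verification already carried out in Propositions \ref{quadrics1} and \ref{cubics}.
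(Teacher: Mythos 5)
Your proposal is correct and follows essentially the same route as the paper's own proof, which likewise reads off $\CH_0(F)=\Q$ (and $\CH_1(F)=\Q$ when $\dim F \geq 4$) from Theorem \ref{ELV}, applies Theorem \ref{linear-theorem-complex} with $l=1$ when $d_X - d_B \geq 4$ and with $l=0$ otherwise, and concludes via Theorem \ref{T:niveau}. The caveat you raise about non-extremal configurations with $d_X - d_B < 4$ (e.g.\ $d_X = 4$, $d_B = 1$ in the first bullet, where the demanded index is still $l=1$ but ELV no longer gives $\CH_1(F)=\Q$) is a genuine subtlety that the paper's two-line proof does not address either, so flagging it is to your credit rather than a defect of your argument relative to the paper's.
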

\begin{proof} The variety $X$ satisfies the assumptions of Theorem
  \ref{linear-theorem-complex} with $l=1$ for $\dim X - \dim B \geq 4$
  and with $l=0$ in any case. Thus the Chow group $\CH_0(X)$ has
  niveau $\leq d_B$ and $\CH_1(X)$ has niveau $\leq d_B$ for $\dim X -
  \dim B \geq 4$.  We can therefore conclude by Theorem
  \ref{T:niveau}.
\end{proof}

\subsubsection{Varieties fibred by complete intersections of bidegree
  $(2,3)$.}

Let $X \subset \P^n$ be the complete intersection of a quadric and of
a cubic. If $\dim X \geq 6$, then Hirschowitz and Iyer \cite{HI}
showed $\CH_l(X)=\Q$ for $l \leq 1$. (The result of
Esnault--Levine--Viehweg only says that $\CH_l(X)=\Q$ for $l \leq 1$ when
$\dim X \geq 7$).

\begin{proposition} Let $f: X \r C$ be a dominant morphism from a smooth
  projective complex variety $X$ to a smooth projective complex curve
  $C$ whose closed fibres are complete intersections of bidegree
  $(2,3)$ of dimension $6$.  Then $X$ satisfies the Hodge conjecture.
\end{proposition}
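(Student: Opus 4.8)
The plan is to follow the same strategy as in the proofs of Propositions \ref{quadrics1}, \ref{cubics} and \ref{22}: first record that the fibres have trivial low-degree Chow groups, then transfer this into a niveau bound on the Chow groups of $X$ via Theorem \ref{linear-theorem-complex}, and finally feed the result into the first item of Theorem \ref{T:niveau}.

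First I would pin down the dimensions. Since $C$ is a smooth projective curve, $d_B = \dim C = 1$, and as the closed fibres of $f$ have dimension $6$ we get $\dim X = 7$. The morphism $f$ is projective because $X$ and $C$ are projective, and it is surjective because $f$ is proper, hence has closed image, which being dense in the irreducible curve $C$ must equal $C$. By the result of Hirschowitz and Iyer \cite{HI} recalled just before the statement, every complete intersection of bidegree $(2,3)$ and dimension $\geq 6$ has trivial Chow groups $\CH_l = \Q$ for $l \leq 1$; applying this fibrewise yields $\CH_l(X_c) = \Q$ for all $l \leq 1$ and all closed points $c \in C$.

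Next I would invoke Theorem \ref{linear-theorem-complex} with $B = C$, so $d_B = 1$, and with $l = 1$. Its hypothesis, namely $\CH_i(X_b) = \Q$ for all $i \leq l$ and all closed points $b \in B$, is exactly what the previous step supplies, so the conclusion is that both $\CH_0(X)$ and $\CH_1(X)$ have niveau $\leq d_B = 1$.

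Finally I would match this with the first item of Theorem \ref{T:niveau}, which for a smooth projective $d$-fold asserts the Hodge conjecture as soon as $\CH_0(X_\Omega), \ldots, \CH_l(X_\Omega)$ have niveau $\leq 3$ with $l = \lfloor \frac{d-4}{2} \rfloor$. Here $d = 7$, so $l = \lfloor \frac{3}{2} \rfloor = 1$, and since a niveau $\leq 1$ bound is a fortiori a niveau $\leq 3$ bound, the two groups $\CH_0(X)$ and $\CH_1(X)$ that we control are precisely the ones required. Hence $X$ satisfies the Hodge conjecture. I do not expect any genuine obstacle beyond this bookkeeping: the only points needing care are the index arithmetic in Theorem \ref{T:niveau}, i.e. checking that $\lfloor \frac{d-4}{2} \rfloor$ equals $1$ so that exactly the controlled Chow groups appear, together with the elementary monotonicity of the niveau condition; the substantive input is the fibrewise Chow-triviality coming from \cite{HI}.
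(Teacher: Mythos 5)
Your proposal is correct and follows exactly the paper's own route: the Hirschowitz--Iyer triviality $\CH_l(X_c)=\Q$ for $l\leq 1$ on the six-dimensional $(2,3)$ fibres, Theorem \ref{linear-theorem-complex} with $l=1$, $d_B=1$ to get that $\CH_0(X)$ and $\CH_1(X)$ have niveau $\leq 1$, and then the first item of Theorem \ref{T:niveau} with $d=7$, $l=\lfloor\frac{7-4}{2}\rfloor=1$. The only difference is that you spell out the bookkeeping (surjectivity of $f$, the index arithmetic, monotonicity of niveau) that the paper leaves implicit, and all of it checks out.
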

\begin{proof}
  By Theorem \ref{linear-theorem-complex}, we see that the Chow groups
  $\CH_0(X)$ and $\CH_1(X)$ have niveau $\leq 1$. We can thus conclude
  by Theorem \ref{T:niveau}.
\end{proof}

\subsection{Varieties fibred by  cellular varieties} \label{cellular}

Let $f : X \r B$ be a complex dominant morphism from a smooth
projective variety $X$ to a smooth projective variety $B$ whose closed
fibres are cellular varieties (not necessarily smooth). In other
words, $X$ is a smooth projective complex variety fibred by cellular
varieties over $B$.  For example, if $\Sigma \subset B$ is the
singular locus of $f$, then $X$ could be such that $X|_{B \backslash
  \Sigma}$ is a rational homogeneous bundle over $B\backslash \Sigma$
(e.g. a Grassmann bundle) and the closed fibres of $f$ over $\Sigma$
(the degenerate fibres) are toric. That kind of situation is
reminiscent of the setting of \cite{GHM2}.

\begin{proposition} \label{thcellular} Let $f : X \r B$ be a dominant
  morphism between smooth projective complex varieties whose closed
  fibres are cellular varieties.
  \begin{itemize}
  \item Assume $B$ is a curve, then $X$ is Kimura finite-dimensional
    and $X$ satisfies Murre's conjectures.
  \item Assume $\dim B \leq 2 $ and $\dim X \leq 6$. If $f$ is
    connected and smooth away from finitely many points in $B$, then
    $X$ satisfies Grothendieck's standard conjectures.
  \item Assume $\dim B \leq 3$ and $\dim X \leq 7$.  If $f$ is
    connected and smooth away from finitely many points in $B$, then
    $X$ satisfies the Hodge conjecture.
 \end{itemize}
 \end{proposition}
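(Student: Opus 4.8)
The plan is to reduce each case to an application of the niveau results for the Chow groups of $X$ (obtained from the assumptions on the fibres) combined with Theorem \ref{T:niveau}, exactly in the style of Propositions \ref{quadrics1}, \ref{cubics} and \ref{22}. The three cases differ only in the arithmetic of which Chow groups have which niveau, so the common first step is to feed the cellular-fibre hypothesis into the complex niveau theorems of Section \ref{general}.

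For the first case, where $B$ is a curve, I would argue as follows. A cellular variety $F$ has $\CH_i(F)$ finitely generated for all $i$ (indeed $\CH_i(F)$ is a finite-dimensional $\Q$-vector space spanned by the cell closures), so the hypotheses of Theorem \ref{1-complex} are satisfied for every $i \leq l$, for any $l$. Applying Theorem \ref{1-complex} with $l = \lfloor \frac{d_X-2}{2} \rfloor$ shows that $\CH_i(X)$ has niveau $\leq 1 = d_B$ for all $i \leq \lfloor \frac{d_X-2}{2} \rfloor$. The fourth item of Theorem \ref{T:niveau} (with $n=1$, $d=d_X$) then gives Kimura finite-dimensionality, and the third item gives a Murre decomposition, since $\lfloor \frac{d_X-3}{2} \rfloor \leq \lfloor \frac{d_X-2}{2} \rfloor$.

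For the second and third cases, where $\dim B \leq 2$ (resp. $\leq 3$) and $f$ is smooth away from finitely many points, I would instead invoke Theorem \ref{genth-complex}. Its hypotheses require $\CH_i(X_b)$ finitely generated for all closed $b$ and all $i \leq l$ (automatic for cellular fibres), and $\CH_i(X_b)=\Q$ for all but finitely many closed points $b$ and all $i < l$. The generic fibre is smooth cellular, hence has $\CH_i = \Q$ for all $i$; only the finitely many degenerate fibres over the singular locus can have larger Chow groups, so this second bullet is met. With $d_X \leq 6$ (resp. $\leq 7$) I would apply Theorem \ref{genth-complex} with $l = \lfloor \frac{d_X-3}{2} \rfloor$ (resp. $l=\lfloor \frac{d_X-4}{2} \rfloor$), obtaining that $\CH_i(X)$ has niveau $\leq d_B$ for all $i \leq l$; the second item of Theorem \ref{T:niveau} (with $n=2$) then yields the Lefschetz standard conjecture, which over a field of characteristic zero is equivalent to the full standard conjectures, and the first item (with $n=3$) yields the Hodge conjecture.

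The step I expect to require the most care is verifying that a (possibly singular, degenerate) cellular fibre genuinely has finitely generated Chow groups, so that the ``finitely generated'' hypothesis of Theorems \ref{1-complex} and \ref{genth-complex} is met even over the degenerate locus; this rests on the fact that a cellular variety, by definition, admits a filtration with successive complements affine spaces, whence its Chow groups are free of finite rank on the cell closures. The other delicate point is the bookkeeping relating the dimension bound $d_X \leq 6$ or $7$, the niveau $n$ being invoked in Theorem \ref{T:niveau}, and the range $l = \lfloor \frac{d-3}{2} \rfloor$ or $\lfloor \frac{d-4}{2} \rfloor$ that Theorem \ref{T:niveau} demands; I would check these indices explicitly to confirm that the Chow groups in the required range indeed have the asserted niveau.
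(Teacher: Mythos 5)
Your first bullet is argued exactly as the paper does (cellular fibres have finitely generated Chow groups, then Theorem \ref{1-complex} followed by Theorem \ref{T:niveau}), and your overall strategy for the second and third bullets --- Theorem \ref{genth-complex} followed by Theorem \ref{T:niveau} --- is also the paper's. However, your verification of the second hypothesis of Theorem \ref{genth-complex} rests on a false statement: a smooth cellular variety does \emph{not} have $\CH_i = \Q$ for all $i$. Already $\P^1 \times \P^1$ (or any smooth quadric surface, or a Grassmannian) is cellular with $\CH_1$ of rank $2$; cellularity yields \emph{finitely generated} Chow groups, not one-dimensional ones. What saves your argument is the numerology: with $l = \lfloor \frac{d_X-3}{2} \rfloor \leq 1$ (resp. $l = \lfloor \frac{d_X-4}{2} \rfloor \leq 1$) the hypothesis only concerns $i < l$, that is $i = 0$, so all that is needed is $\CH_0(X_b) = \Q$ for all but finitely many $b$. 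But even this does not follow from cellularity alone, since a disconnected cellular variety (for instance a disjoint union of two points) has $\CH_0$ of rank $>1$. This is precisely where the connectedness hypothesis --- which appears explicitly in the statement of the second and third bullets but which you never invoke --- is essential: the paper deduces from Mumford's theorem \cite{Mumford} that a connected smooth projective complex variety with finitely generated Chow group of zero-cycles has $\CH_0 = \Q$, and applies this to the fibres over the smooth locus of $f$, which account for all but finitely many points of $B$ since the singular locus of $f$ is finite.

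So the gap is reparable in one line, but as written the key step is wrong, and the error is not harmless in spirit: had the dimension bounds permitted $l \geq 2$, your blanket claim would be needed for $i = 1$, where it genuinely fails for cellular fibres --- which is exactly why the bounds $d_X \leq 6$ and $d_X \leq 7$ appear in the statement. Your remaining index bookkeeping (feeding niveau $\leq d_B$ for $\CH_0(X)$ and $\CH_1(X)$ into the items of Theorem \ref{T:niveau} with $n = 2$ and $n = 3$, and the reduction of the standard conjectures to the Lefschetz standard conjecture in characteristic zero) is correct and agrees with the paper.
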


 \begin{proof} The Chow groups of cellular varieties are finitely
   generated. The first statement thus follows from Theorems
   \ref{1-complex} and \ref{T:niveau}.
   Let us now focus on the cases when $\dim B$ is either $2$ or $3$.  It
  is a consequence of Mumford's theorem \cite{Mumford} that a
  connected smooth projective complex variety with finitely generated
  Chow group of zero-cycles actually has Chow group of zero-cycles
  generated by a point.  Thus the second and third statements follow
  from Theorem \ref{genth-complex} with $l=1$, and from Theorem
  \ref{T:niveau}.
\end{proof}

\section*{Acknowledgements} Thanks to Reza Akhtar and Roy Joshua for
asking a question related to the Chow--K\"unneth decomposition of a
smooth blow-up, to Sergey Gorchinskiy for suggesting parts of
Proposition \ref{general-generic}, to Mingmin Shen for a discussion
related to the proof of Lemma \ref{locally-closed}, and to Burt Totaro
for suggesting the statement of Lemma \ref{lemma}. Finally, many
thanks to the referee for helpful comments and suggestions.

\vspace{5pt}


\end{document}